\def\lw{\left}
\def\rw{\right}
\def\bega{\begin{aligned}}
\def\enda{\end{aligned}}
\newcommand{\la}{\label}
\newcommand{\be}{\begin{equation}}
\newcommand{\ee}{\end{equation}}
\newcommand{\bea}{\begin{eqnarray}}
\newcommand{\eea}{\end{eqnarray}}
\newcommand{\ve}{{\varepsilon}}
\newcommand{\rmd}{{\rm d}}
\newcommand{\ol}[1]{\mkern 1.5mu\overline{\mkern-1.5mu#1\mkern-1.5mu}\mkern 1.5mu}
\def\wc{\rightharpoonup}
\def\w{{\omega}}
\def\beq{\begin{equation}}
\def\eeq{\end{equation}}
\newcommand{\bq}{\begin{equation}}
\newcommand{\eq}{\end{equation}}
\newcommand{\bqa}{\begin{eqnarray*}}
\newcommand{\eqa}{\end{eqnarray*}}
\newcommand{\p}{\partial}
\newcommand{\cc}{{\omega_0}}
\newcommand{\ue}{u_\mathsf{sb}}
\def\eps{\epsilon}
\def\pt{\partial}
\def\bega{\begin{aligned}}
\def\enda{\end{aligned}}
\def\lw{\left}
\def\rw{\right}
\def\la{\langle}
\def\ra{\rangle}
\def\bcase{\begin{cases}}
\def\ecase{\end{cases}}
\def\bmx{\begin{bmatrix}}
\def\emx{\end{bmatrix}}
\theoremstyle{plain}	
\newtheorem{thm}{Theorem}
\newtheorem{lem}[thm]{Lemma}
\newtheorem{prop}[thm]{Proposition}
\newtheorem{question}{Question}
\theoremstyle{definition}
\newtheorem{conj}{Conjecture}
\newtheorem*{rem}{Remark}
\theoremstyle{remark}
\let\emph\relax 
\DeclareTextFontCommand{\emph}{\bfseries\em}
\title{\vspace*{-25mm} The Feynman--Lagerstrom criterion for boundary layers}
\author{Theodore D. Drivas} 
\address{ Department of Mathematics, Stony Brook University,
Stony Brook, NY 11794}
\email{tdrivas@math.stonybrook.edu}
\author{Sameer Iyer}\address{ Department of Mathematics, University of California, Davis, CA 95616}
\email{sameer@math.ucdavis.edu}
 \author{Trinh T. Nguyen}
 \address{Department of Mathematics, University of Wisconsin-Madison, WI 53706}
\email{tnguyen67@wisc.edu}
\begin{document}
\date{\today}

\date{\today}

\begin{abstract}
We study the boundary layer theory for slightly viscous stationary flows forced by an imposed slip velocity at the boundary.  According to the theory of Prandtl (1904) and Batchelor (1956), any Euler solution arising in this limit and consisting of a single ``eddy" must have constant vorticity.  Feynman and Lagerstrom (1956) gave a procedure to select the value of this vorticity by demanding a  \textit{necessary} condition for the existence of a periodic Prandtl boundary layer description.    In the case of the disc, the choice -- known to Batchelor (1956) and Wood  (1957) --  is explicit in terms of the slip forcing.  For domains with non-constant curvature, Feynman and Lagerstrom give an approximate formula for the choice which is in fact only implicitly defined and must be determined together with the boundary layer profile. We show that this condition is also sufficient for the existence of a periodic boundary layer described by the Prandtl equations. Due to the quasilinear coupling between the solution and the selected vorticity, we devise a delicate iteration scheme coupled with a high-order energy method that captures and controls the implicit selection mechanism.
\end{abstract}

\maketitle

\vspace{-5mm}
\section{Introduction}

Let $M\subset \mathbb{R}^2$ be a bounded, simply connected domain. Consider the Navier-Stokes equations
   \begin{align}\label{nsb}
   \partial_t u^\nu +u^\nu \cdot \nabla u^\nu &= -\nabla p^\nu + \nu \Delta u^\nu \qquad \text{in} \ M, \\ \label{incom}
   \nabla \cdot u^\nu &=0 \qquad \qquad  \qquad \ \  \quad \text{in} \ M.
   \end{align}
   Motion is excited through the boundary, where stick boundary conditions are supplied
\begin{align}\label{nonpen}
u^\nu\cdot\hat{n} &= 0 \qquad \text{on}\ \partial M,\\ 
  u^\nu\cdot\hat{\tau} &= f \qquad \text{on}\ \partial M, \label{boundaryforce}
\end{align}
where $\hat{n}$ is the unit outer normal vector field on the boundary and $\hat{\tau}=\hat{n}^\perp$, the unit tangent field.
In the above, there is a given autonomous slip velocity $f:\partial M \to \mathbb{R}$, which should be thought of as being generated by motion of the boundary (via so-called ``stick" or ``no slip" boundary conditions -- the fluid velocity on the boundary matches its speed), and is responsible for the generation of complex fluid motions in the bulk.  If the viscosity is large relative to the forcing, then it is easy to see that all solutions converge to a unique steady state as $t\to \infty$ \cite{Tsai}.  However, as viscosity is decreased, one generally expects solutions to develop and retain non-trivial variation in time, perhaps even forever harboring  turbulent behavior. There one general exception to this expectation in a special setting, proved in \S \ref{noturbthm}:
\begin{thm}[Absence of turbulence] \label{absturb}
Let $M=\mathbb{D}$ be the disk of radius $\mathsf{R}$ and  $f = \frac{1}{2} \cc \mathsf{R}$ for any given $\cc\in \mathbb{R}$ be a constant slip on the boundary.   For any distributionally divergence-free $u_0\in L^2$, the unique Leray-Hopf weak solution converges at long time to solid body rotation $\ue =\frac{1}{2}\cc x^\perp$ having vorticity $\omega_0$. In fact,
\be
 \| u(t)- \ue\|_{L^2}\leq  \| u_0- \ue\|_{L^2} e^{-\lambda_1 \nu t}
\ee
where $\lambda_1$ is the first positive eigenvalue of $-\Delta$ with Dirichlet boundary conditions on $\mathbb{D}$.
\end{thm}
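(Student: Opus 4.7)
The strategy is to exhibit solid body rotation as an exact stationary solution of Navier--Stokes and then use the $L^2$ distance to it as a Lyapunov functional. The geometry of the disc makes both facts nearly trivial.

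\emph{Step 1: $\ue = \tfrac{1}{2}\omega_0 x^\perp$ solves the stationary Navier--Stokes system with boundary data \eqref{nonpen}--\eqref{boundaryforce}.} The field $x^\perp$ is divergence-free and harmonic, so $\Delta \ue = 0$ and the viscous term vanishes. The convective term $(\ue\cdot\nabla)\ue = -\tfrac{\omega_0^2}{4}x$ is a gradient (centripetal acceleration), absorbed into the pressure. On $\partial\mathbb{D}$ the normal component vanishes and the tangential component equals $\tfrac{1}{2}\omega_0\mathsf{R}=f$, as required.

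\emph{Step 2: Set up the perturbation.} Write $w=u^\nu-\ue$. Subtracting the stationary equation from \eqref{nsb}--\eqref{boundaryforce}, $w$ satisfies the perturbation system
\begin{equation}
\partial_t w + (\ue\cdot\nabla)w + (w\cdot\nabla)\ue + (w\cdot\nabla)w = -\nabla q + \nu\Delta w, \qquad \nabla\cdot w = 0,
\end{equation}
with the \emph{homogeneous} boundary condition $w|_{\partial\mathbb{D}}=0$ (both tangential and normal components of $u^\nu$ match those of $\ue$) and $w|_{t=0}=u_0-\ue$.

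\emph{Step 3: Energy identity with vanishing cross term.} Testing against $w$ and using $\nabla\cdot\ue=\nabla\cdot w=0$ together with $w|_{\partial\mathbb{D}}=0$, the transport term $\int(\ue\cdot\nabla)w\cdot w$, the self-nonlinearity $\int(w\cdot\nabla)w\cdot w$, and the pressure term all vanish. The crucial observation is that $\nabla\ue$ is pointwise antisymmetric:
\begin{equation}
\nabla\ue = \tfrac{\omega_0}{2}\begin{pmatrix} 0 & -1 \\ 1 & 0\end{pmatrix},
\end{equation}
so $\bigl((w\cdot\nabla)\ue\bigr)\cdot w = \tfrac{\omega_0}{2}(-w_1 w_2 + w_2 w_1) = 0$ identically. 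We are left with the clean identity
\begin{equation}
\tfrac{1}{2}\tfrac{d}{dt}\|w\|_{L^2}^2 = -\nu\|\nabla w\|_{L^2}^2.
\end{equation}

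\emph{Step 4: Close via Poincar\'e.} Since $w$ vanishes on $\partial\mathbb{D}$, the Poincar\'e inequality gives $\lambda_1\|w\|_{L^2}^2 \le \|\nabla w\|_{L^2}^2$, where $\lambda_1$ is the first Dirichlet eigenvalue of $-\Delta$ on $\mathbb{D}$. Gr\"onwall yields $\|w(t)\|_{L^2}^2 \le \|w(0)\|_{L^2}^2 e^{-2\lambda_1 \nu t}$, which is precisely the claimed bound after taking square roots.

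\emph{Main obstacle and caveats.} The proof is essentially a one-line energy estimate once the perturbation variable is chosen, so the conceptual heart is just noticing that $\nabla\ue$ is skew, a property special to rigid rotation on a rotationally symmetric domain. The only genuinely technical point is justifying the energy identity in the Leray--Hopf class: in $2$D, weak solutions of \eqref{nsb}--\eqref{boundaryforce} are unique, instantaneously smooth, and satisfy the energy equality, so the manipulations in Step 3 are legitimate; a standard Galerkin/mollification approximation on the perturbation system delivers the inequality directly for the initial data in $L^2$ stated in the theorem.
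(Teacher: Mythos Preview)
Your proof is correct and follows essentially the same route as the paper: set $w=u^\nu-\ue$, derive the energy identity for $w$ with homogeneous Dirichlet data, observe that the cross term $\int (w\cdot\nabla)\ue\cdot w$ vanishes because $\nabla\ue=\tfrac{\omega_0}{2}\bigl(\begin{smallmatrix}0&-1\\1&0\end{smallmatrix}\bigr)$ is skew, and close with Poincar\'e. Your explicit verification that $\ue$ is a stationary Navier--Stokes solution and your remark on justifying the energy identity in the Leray--Hopf class are welcome clarifications that the paper leaves implicit.
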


\begin{rem}
 The forcing (slip velocity) in Theorem \ref{absturb} can be arbitrarily large and yet for any viscosity Navier-Stokes has a one-point attractor.  This is the analogue of Marchioro's results on the absence of turbulence on the torus with `gravest mode' body forcing \cite{Marchioro86,Marchioro87}.  \end{rem}

Theorem \ref{absturb} highlights a peculiarity of solid body rotation on the disk: if you center a circular basin on fluid on a record player, all motion will eventually be solid body.
A question arises:
\begin{question}
What if the imposed slip is non-constant, or the domain is not a disk?
\end{question}

 \begin{figure}[h!]
 \centering
 \includegraphics[height=1.8in]{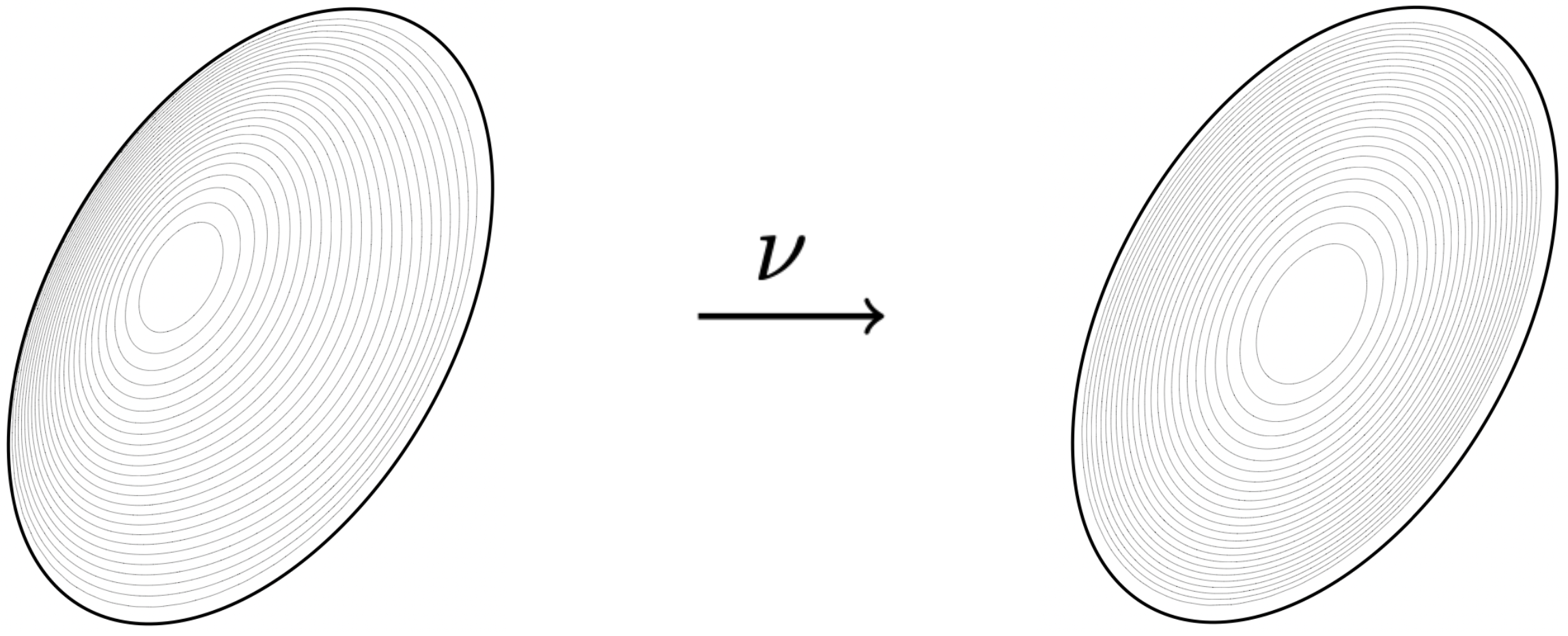} 
 \caption{ Cartoon of streamlines of a steady Navier-Stokes solutions on the ellipse forced by an imposed slip $u^\nu\cdot \tau= f$ on the left. Streamlines of constant vorticity Euler solution $\psi_* =  \frac{a^2}{2(1+a^2)}(\frac{x^2}{a^2}+y^2)$ shown on the right.   
 }
 \end{figure}

As mentioned above, one might expect that if either of these conditions is violated, time dependence generally survives.  However, if the boundary forcing is special this need not be the case. For instance, consider  the velocity field with constant (unit) vorticity on any $M$ 
      \be\label{constvorteuler}
   u_{*} = K_M [1], \qquad K_M:= \nabla^\perp \Delta^{-1}_{ D}.
   \ee
Any such velocity field satisfies both the Euler and Navier-Stokes equations in the bulk.  As such, it is a stationary solution of Euler, and also of Navier-Stokes provided $u_*$ is taken as initial data and it is forced consistently on the boundary: 
$
 f_* =     u_{*}\cdot \tau.
$
Thus, for any domain there is a family of non-trivial time-independent solutions uniformly in $\nu\geq 0$.

 For force sufficiently close to that generated by a stationary Euler solution, asymptotic stability may occur but is a delicate issue.  To begin to understanding these issues, we are interested in the question of the existence of sequence of \textit{stationary} Navier-Stokes solutions approximating an Euler flow in this setting. The   Prandtl--Batchelor theory \cite{Prandtl04,Batchelor56} provides a restriction on the type of stationary Euler solutions that can arises as inviscid limits. Namely, it stipulates that they have constant vorticity within closed streamlines, so-called ``eddies".  See also Childress \cite{Childress,Childress2} and Kim \cite{Kimthesis,Kim98}. The result, proved in \S \ref{appendix}, is:

\begin{thm}[Prandtl-Batchelor Theorem] \label{PBthm}
Let $M\subset \mathbb{R}^2$ be a simply connected domain with smooth boundary.  Let $\psi:M\to \mathbb{R}$ be a $W^{4,1+}(M)$ streamfunction of a steady, non-penetrating solution of Euler $u_e$ having a single stagnation point which is non-degenereate in a sense that the period of revolution of a particle  is a differentiable function of the streamline. Suppose $\{u^\nu\}_{\nu>0}$ is a family  satisfying \eqref{nsb}, \eqref{incom} together with
\be\label{conv}
\lim_{\nu\to 0} \|u ^\nu  -u_e \|_{H^{5/2+}(U)} \to 0,
\ee
for all interior open subsets $U\subset M$. 
Then $u_e= \cc u_*$
for a constant $\cc\in \mathbb{R}$ and $u_*$ is \eqref{constvorteuler}.
  \end{thm}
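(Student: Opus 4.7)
The plan is to implement the classical Batchelor--Prandtl argument. The key is to write the Euler vorticity as a function of the streamfunction via the steady transport equation, derive an \emph{exact} integral identity for steady Navier--Stokes on the sublevel sets of its own streamfunction, then pass to the inviscid limit to deduce that this function must be constant.

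First I would exploit the Hamiltonian structure of $u_e$. Since $u_e$ is a steady Euler solution, $u_e \cdot \nabla \omega_e = 0$, so $\omega_e$ is constant along Euler streamlines. Simple connectedness and non-penetration yield $u_e = \nabla^\perp \psi_e$ with $\psi_e|_{\partial M} \equiv 0$ after normalization. The single non-degenerate stagnation point together with the differentiable period assumption foliate $M$ minus the stagnation point by closed orbits $C_c := \{\psi_e = c\}$, hence $\omega_e = F(\psi_e)$ for some differentiable $F$ defined on the range of $\psi_e$. The theorem reduces to showing $F \equiv \omega_0$ is constant.

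Next, I would extract the Batchelor-type identity from Navier--Stokes. Let $\psi^\nu$ denote the Navier--Stokes streamfunction, normalized so that $\psi^\nu|_{\partial M}=0$. For each $c$ strictly between $0$ and the stagnation value of $\psi_e$ and $\nu$ sufficiently small, uniform convergence $\psi^\nu \to \psi_e$ (from local elliptic regularity applied to the $H^{5/2+}(U)$ hypothesis) together with $|\nabla \psi_e| > 0$ on $C_c$ imply, via the implicit function theorem, that $\{\psi^\nu = c\}$ is a smooth closed curve converging in $C^1$ to $C_c$. Integrating the steady vorticity equation $u^\nu \cdot \nabla \omega^\nu = \nu \Delta \omega^\nu$ over $\Omega_c^\nu := \{\psi^\nu > c\}$ and using that $u^\nu = \nabla^\perp \psi^\nu$ is tangent to $\partial \Omega_c^\nu$, the divergence theorem annihilates the transport term and leaves, after cancelling a factor of $\nu$, the exact identity
\[
\oint_{\partial \Omega_c^\nu} \partial_n \omega^\nu \, d\ell = 0.
\]

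The passage to the limit is the final step. The $H^{5/2+}(U)$ convergence gives $\nabla \omega^\nu \to \nabla \omega_e$ in $H^{1/2+}_{\mathrm{loc}}$, which sits one derivative above the Sobolev trace threshold on the $C^1$-convergent curves $\partial \Omega_c^\nu$. Hence, in the limit, $\oint_{C_c} \partial_n \omega_e \, d\ell = 0$. Since $\omega_e = F(\psi_e)$ yields $\partial_n \omega_e = F'(c)|\nabla \psi_e|$ on $C_c$ with $|\nabla \psi_e| > 0$ there, this forces $F'(c) = 0$ for every admissible $c$. Thus $F \equiv \omega_0$; the Dirichlet problem $\Delta \psi_e = \omega_0$, $\psi_e|_{\partial M}=0$ uniquely gives $\psi_e = \omega_0 \Delta^{-1}_D[1]$ and hence $u_e = \omega_0 u_*$. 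The principal technical hurdle is this joint convergence: one must simultaneously control the $C^1$ convergence of the moving level curves $\partial \Omega_c^\nu$ and the convergence of the normal traces of $\nabla \omega^\nu$ on them. The non-degeneracy of the stagnation point and the differentiable period assumption are exactly what power the implicit function argument for the former, while the $H^{5/2+}$ hypothesis is calibrated precisely to put $\nabla \omega^\nu$ one derivative above the trace threshold in 2D for the latter.
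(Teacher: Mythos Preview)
Your proposal is correct and follows essentially the same classical Prandtl--Batchelor argument as the paper: write $\omega_e = F(\psi_e)$, integrate the steady Navier--Stokes vorticity balance over sublevel sets of $\psi^\nu$ to obtain $\oint_{\{\psi^\nu=c\}}\partial_n\omega^\nu\,d\ell=0$, pass to the limit, and conclude $F'\equiv 0$ since the circulation along each streamline is nonzero. The only cosmetic differences are that the paper cites \cite{CDG22} for the representation $\Delta\psi=F(\psi)$ rather than deriving it, and organizes the limit passage slightly differently---splitting into a trace-theorem term $\|\omega^\nu-\omega\|_{H^{3/2+}}$ on the $\nu$-level set plus a symmetric-difference term controlled by $\|\Delta F(\psi)\|_{L^{1+\delta}}$---whereas you package both effects into $C^1$ convergence of the curves via the implicit function theorem together with $H^{1/2+}$ convergence of $\nabla\omega^\nu$.
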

  
   In the above theorem, $M$ can be thought of as a streamline of an Euler solution occupying some larger spatial domain.  If the limiting Euler solution consists of multiple eddies, the above shows that, within each eddy, the vorticity tends to become constant.
The vorticity of the resulting solution would be a staircase landscape separated, perhaps, by vortex sheets.  Such a picture is consistent with the general expectation of the emergence of weak solutions in the inviscid limit on bounded domains \cite{CLNV,CV,DN19}.  We remark that similar selection principles to Theorem \ref{PBthm} appear also in two-dimensional passive scalar problems \cite{RY,NPR}, and in steady heat distribution in three-dimensional integrable magnetic fields \cite{DGG23}.

    If the boundary data is a sufficiently small perturbation of the corresponding slip of an unit vorticity Euler flow $u_*$ on  whole vessel $M$,
         \be\label{formulaf}
 f =  u_*\cdot \tau +  \ve g.
\ee
    then the inviscid limit of steady Navier-Stokes solutions might be expected to consist of just a single eddy having constant vorticity \eqref{constvorteuler}, that is, for some appropriate constant $\cc\in \mathbb{R}$, 
\be\label{nslim}
u^\nu \to \cc u_* \qquad \text{as} \qquad \nu \to 0.
\ee
See Conjecture \ref{conj}.
This naturally leads to  the following question:
\begin{question}
Given boundary data \eqref{nonpen}, \eqref{boundaryforce}, \eqref{formulaf}, how is the limiting vorticity $\cc$ \eqref{nslim} selected?
\end{question}

This question was discussed by Batchelor (1956) \cite{Batchelor56} and  Wood (1957) \cite{Wood57} for disk domains, and the resulting prediction is called the  Batchelor--Wood formula.  This analysis was done independently by Feynman--Lagerstrom (1956) \cite{FL56} who also generalized this formula to domains with non-constant curvature. See also \cite{Lagerstrom,LagerstromCasten}.   The idea is: the vorticity value $\cc$ is fixed by demanding the corresponding Prandtl equation for the boundary layer admits a periodic solution (that the layer exists).
On the disk $M=  \mathbb{D}$ of radius $\mathsf{R}$, this amounts to:
  \be\label{formfc}
\omega_0^2= \frac{1}{ ( \mathsf{R}/2)^2 } \fint_{0}^{2\pi \mathsf{R}} f^2(\theta) \rmd \theta .
  \ee
  This picture has  been rigorously justified by  Kim \cite{Kim0,Kim} for the boundary layer and  recently by Fei, Gao, Lin and Tao  \cite{Lin1} for Navier-Stokes. The latter constructs a sequence of steady Navier-Stokes solutions on the disk forced by \eqref{formulaf}   converging towards this predicted end state.
In the case of a general domain, Feynman--Lagerstrom argued that selecting $\cc$ to ensure a certain periodicity is a necessary condition for the existence of such a layer and therefore for convergence, but did not speak to its sufficiency.  We now review their theory.

 \begin{figure}[h!]
 \centering
 \includegraphics[width=0.5\textwidth]{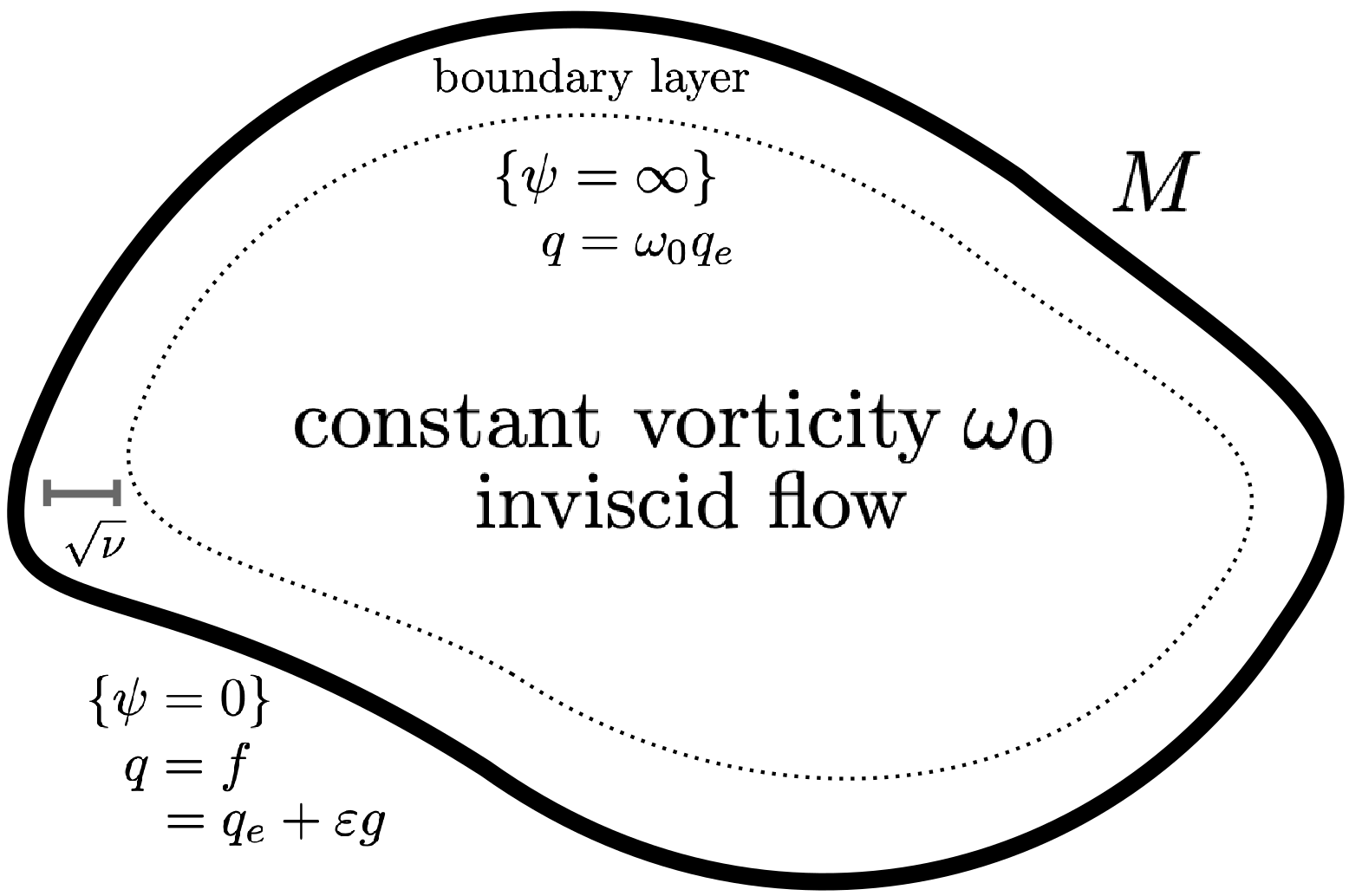} \hspace{-5mm}
  \caption{Boundary layer geometry, depicted (abusing the required asymptotic $\nu\to 0$ for the sake of illustration) in  von Mises coordinates. The level $\psi\in[0,\infty)$ denotes the distance from the boundary in a layer of size $\sqrt{\nu}$, upon rescaling so that this extends indefinetely. See  \S\ref{derivation}.  The {unique} vorticity value $\omega_0$ so that such a boundary layer exists is selected nonlinearly via \eqref{ccond}. It is given approximately by the Feynman-Lagerstrom formulae \eqref{FLcond} or \eqref{FLcond2}.}
 \end{figure}

Recall Prandtl's boundary layer equations written in von Mises coordinates  $(s,\ol{\psi})$, where $s$ is the periodic coordinate on the boundary and $\bar{\psi}:=\psi/\sqrt{\nu}$ is the rescaled streamline coordinate. From hereon, we denote $\bar{\psi}=\psi$ and 
\be
q_e (s):= u_*\cdot \hat{\tau}(\gamma(s))
\ee
where $\gamma : [0,|\partial M|)\to \partial M$ is the arc-length parametrization of the boundary, is   the  tangential slip along the boundary of unit vorticity  Euler solutions  \eqref{constvorteuler}. The Prandtl equations -- which determine an unknown function $q:[0,L)\times \mathbb{R}^+\to \mathbb{R}$ which serves as an approximation of the tangential Navier-Stokes velocity $u^\nu\cdot \hat{\tau}$ in an $O(\sqrt{\nu})$ boundary layer -- are (see \cite{Oleinik}):
\be\label{vmeqn}
\partial_s Q - q\partial_\psi^2 Q= 0, \qquad Q = q^2-\omega_0^2  q_e ^2,
\ee
which is to be satisfied on $(\psi,s)\in [0,\infty)\times [0,L)$ where $L=|\partial M|$ is the length of the boundary. For completeness, we derive these equations in \S\ref{derivation}.
 The solution $q$ must connect Navier-Stokes to Euler: at the boundary ($\psi=0$), the solution $q$ takes the Navier-Stokes data and away from the boundary ($\psi=\infty$), the solution $q$ assumes the Eulerian behavior:
\be
q(0,s)= f(s), \qquad q(\infty,s) =  \cc q_e (s)
\ee
which, for $Q$,  translates to the data
\begin{align}
Q(0,s) &=f^2(s)  - \omega_0^2  q_e ^2(s),
\qquad 
 Q(\infty,s) =0.
 \end{align}
The solution $q$ (or, equivalently, $Q$) of equation \eqref{vmeqn} must be periodic in the $s$ variable (so that the boundary layer closes). Feynman--Lagerstrom   noted that this leads to  a self-consistency condition on $\cc$. We will enforce this in the following way.  First, we rewrite \eqref{vmeqn} as
\be
\partial_s Q -\cc q_e \partial_\psi^2 Q 
=
 \Big(1-  \frac{\cc q_e }{\sqrt{\omega_0^2  q_e ^2+ Q }}\Big)\partial_s Q.
\ee
Integrating the above equation over the boundary, we obtain 
\be
 \partial_\psi^2 \int_0^L  q_e (s) Q(\psi,s) \rmd s = \mathcal{N}^\cc[Q]
\ee
where the nonlinearity is explicitly
\be\label{Nform}
\mathcal{N}^\cc[Q](\psi):= \frac{1}{\cc} \int_0^L  \left(1-  \frac{\cc q_e (s)}{\sqrt{\omega_0^2  q_e ^2(s)+ Q }}\right)\partial_s Q \rmd s.
\ee 
Then, for some scalars $A$ and $B$, we obtain the identity
\be
 \int_0^L   q_e (s)Q(\psi,s) \rmd s 
 = A+ B \psi - \int_\psi^\infty (\psi-y) \mathcal{N}^\cc[Q](y) \rmd y.
\ee
From the boundary conditions, we have that $A=B=0$. We thus obtain the nonlinear condition that at each $\psi\in \mathbb{R}_+$, we have
\be
\int_0^L  q_e (s)Q(\psi,s) \rmd s =  \int_\psi^\infty y \mathcal{N}^\cc[Q](y) \rmd y.
\ee
Evaluating at $\psi=0$, we find a nonlinear, nonlocal condition determining the constant  $\omega_0$:
\be  \label{ccond}
\int_0^L  q_e (s)(f^2(s)  - \omega_0^2  q_e ^2(s)) \rmd s =  \int_0^\infty y \mathcal{N}^\cc[Q](y) \rmd y.
\ee
\begin{rem}[Feynman--Lagerstrom formulae]
Letting $1- \omega_0^2 =:\ve\overline{\omega}_0$ with $\overline{\omega}_0=O(1)$, we anticipate $Q = O(\ve)$.
Since $\frac{1}{\sqrt{ \omega_0^2+x}}-\frac{1}{\sqrt{ \omega_0^2}} = - \frac{x}{2\omega_0^3} + O (x^2)$, we see that $\mathcal{N}^\cc[Q](\psi)=O(Q^2) = O(\ve^2)$.
In fact, we have  $\mathcal{N}^\cc[Q](y)= O(| \partial_s \kappa|\ve^2)$ since it is trivial in the case of the boundary having constant curvature $\kappa:=\hat{\tau}\cdot\nabla \hat{n} \cdot \hat{\tau}$.  Indeed, in this case of $M$ being a disk, it is readily seen that the integrand  in \eqref{Nform} is a total derivative in $s$ and hence $\mathcal{N}^\cc[Q](\psi)\equiv 0$), see the next Remark. Thus, as pointed out by Feynman and Lagerstrom \cite{FL56}, the leading order condition from \eqref{ccond} is 
\be\label{FLcond}
\omega_0^2  =  \frac{\int_0^L  q_e (s)f^2(s) \rmd s}{\int_0^L q_e ^3(s) \rmd s}  + o(|\partial_s \kappa|).
\ee  
This formula is exact (having $\partial_s \kappa=0$) when $M$ is the disk, and generally only for the disk\footnote{Among domains with smooth boundary. For Lipschitz domains, it holds also for regular polygons  \cite{vW}.}. Recalling $ f(s) = q_e (s) +  \ve g(s)$ so that $f^2(s)  - \omega_0^2  q_e ^2(s)=   (1- \omega_0^2 ) q_e ^2(s) + 2\ve g(s)q_e (s) + \ve^2 g^2(s)$, to  leading order in $\ve$ (the deviation of NS data from unit vorticity Euler slip) we have
\be\label{FLcond2}
\omega_0^2 = 1+ 2 \ve   \frac{\int_0^L  q_e ^2(s)g(s) \rmd s}{\int_0^L q_e ^3(s) \rmd s} + O(\ve^2).
\ee  
\end{rem}

\begin{rem}[Wood's formula when $M=\mathbb{D}$]
On the disk of radius $\mathsf{R}$, the constant vorticity solution \eqref{constvorteuler} is solid body rotation $u_*(x)=\frac{1}{2} x^\perp$, so that $q_e =\frac{\mathsf{R}}{2}$ is a constant.  In fact, by Serrin's theorem \cite{Serrin} constraining a domain admitting a solution of $\Delta \psi = 1$ with constant Neumann and Dirichlet data,  the disk is the unique domain for which the solid body Euler solution has constant boundary slip velocity $q_e $.  See also \cite{Kim99}.  On the disk, \eqref{FLcond} without an error term is exact  and, with the circumference  $L=2\pi \mathsf{R}$, agrees with \eqref{formfc} of Wood \cite{Wood57}.  
\end{rem}

In this paper, we rigorously establish this prediction by constructing a periodic boundary layer verifying the Feynman--Lagerstrom condition if  the constant vorticity Euler solution $u_*$ on $M$ defined by \eqref{constvorteuler} has no stagnation points on the boundary. We have

\begin{thm}[Existence of a periodic Prandtl boundary layer] \label{thmbl} Let $M$ be a simply connected domain with $L=|\partial M|$. Denote $\mathbb{T}_L= [0,L)$. 
Let $q_e :\mathbb{T}_L\to \mathbb{R}$ be a smooth, non-vanishing function.  Let  $f(s) := q_e (s)+  \ve g(s)$  with smooth $g: \partial M \to \mathbb{R}$.  For all $\ve$ sufficiently small, depending only the data $(q_e ,g)$, there exists a unique constant $\cc\in \mathbb{R}$ and  function  $q:\mathbb{T}_L\times \mathbb{R}^+\to \mathbb{R}$  such that the pair $(\omega_0, q)$ solves the Prandtl equations  on $\mathbb{T}_L\times \mathbb{R}^+$:
\begin{align} \label{theo:sameer:trinh:1} 
\partial_s Q - q\partial_\psi^2 Q&= 0, \qquad Q = q^2-\omega_0^2  q_e ^2,\\ \label{theo:sameer:trinh:2}
Q(s,0) &=f^2(s)  - \omega_0^2  q_e ^2(s),\\ \label{theo:sameer:trinh:3}
 Q(s,\infty) &=0.
\end{align} 
Moreover, the solution $Q$ lies in the space $X_{2,50}$ defined by \eqref{spaceXkm} and enjoys $\|Q\|_{X_{2,50}} \lesssim \ve$. The selected vorticity $\omega_0$ can be expressed as follows:  there exists a constant $C>0$ so that
\begin{align}\label{constantthm}
\omega_0^2 =  \frac{\int_0^L  q_e (s)f^2(s) \rmd s}{\int_0^L q_e ^3(s) \rmd s} + {\omega}_{{\rm Err}}\qquad \text{where} \qquad |{\omega}_{{\rm Err}}| \leq C \ve^2.
\end{align}
  The sign of $\omega_0$ agrees with that of the background $q_e $ which, in this case, is positive.
  \end{thm}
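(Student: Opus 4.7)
The plan is to treat the problem perturbatively around the unit vorticity Euler flow, combining a Picard-type fixed-point iteration with high-order weighted energy estimates for the linearized problem to handle the quasilinear coupling between $\omega_0$ and $Q$. First I would rescale: the ansatz $\omega_0^2 = 1 + \ve \mu$ with $\mu = O(1)$ and $Q = \ve P$ is suggested by $f = q_e + \ve g$ and the approximate formula \eqref{FLcond2}. Since $q = \sqrt{(1+\ve\mu) q_e^2 + \ve P} = q_e + O(\ve)$, equations \eqref{theo:sameer:trinh:1}--\eqref{theo:sameer:trinh:3} rewrite as
\[
\partial_s P - q_e\, \partial_\psi^2 P = \mathcal{R}(P,\mu,\ve), \quad P(s,0) = 2 q_e g - \mu q_e^2 + \ve g^2, \quad P(s,\infty) = 0,
\]
with $P$ periodic in $s$ and the nonlinear remainder $\mathcal{R} = (q - q_e)\partial_\psi^2 P$ formally of order $\ve$. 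The change of ``time'' variable $t(s) = \int_0^s q_e(s')\,\rmd s'$, valid since $q_e$ is non-vanishing, conjugates the principal operator $\partial_s - q_e \partial_\psi^2$ to the standard heat operator $\partial_t - \partial_\psi^2$ on a circle of period $T_\ast = \int_0^L q_e\,\rmd s$.

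Next I would analyze the linear problem and derive the selection of $\mu$. For the periodic-in-$t$ heat equation on $\{\psi \ge 0\}$ with boundary data $h(t)$, source $F(t,\psi)$, and decay at $\psi = \infty$, Fourier analysis in $t$ shows each nonzero mode solves uniquely with exponential decay in $\psi$. The zero mode reduces to the ODE $-\bar P''(\psi) = \bar F(\psi)$ on $[0,\infty)$ with $\bar P(0) = \bar h$ and $\bar P(\infty) = 0$, which is solvable if and only if $\bar h = -\int_0^\infty y\, \bar F(y)\,\rmd y$. Pulled back to the $s$-variable this is precisely the self-consistency condition \eqref{ccond}. At leading order ($\bar F \equiv 0$), applying it to the trace $P(s,0)$ yields $\mu = 2\int_0^L q_e^2 g\,\rmd s / \int_0^L q_e^3\,\rmd s + O(\ve)$, consistent with \eqref{FLcond2} and \eqref{constantthm}; the denominator is bounded below since $q_e$ does not vanish, so this condition determines $\mu$ uniquely from any current iterate.

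Finally I would set up the Picard scheme: given $(P^{(n)}, \mu^{(n)})$, compute $\mathcal{R}^{(n)}$, determine $\mu^{(n+1)}$ from the compatibility condition applied with $\bar F = \bar{\mathcal{R}}^{(n)}$, and solve the linear heat equation for $P^{(n+1)}$ with the updated boundary data. The main obstacle is closing this iteration uniformly in $\ve$: the coefficient $q$ depends quasilinearly on $(P,\mu)$, and the implicit update of $\mu$ couples back into both the boundary data and the principal coefficient. To control this I would propagate high-order weighted energy estimates for the linear problem, tracking polynomial weights in $\psi$ (needed because the solvability condition involves the first moment of $\bar F$) as well as many tangential derivatives in $s$; this is the purpose of the strong norm $X_{2,50}$. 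Contractivity comes from the explicit $\ve$ factor on $\mathcal{R}$, while the implicit selection of $\mu$ is well-conditioned because the compatibility equation is linear in $\mu$ with leading coefficient $\int_0^L q_e^3\,\rmd s$ uniformly bounded below. At the fixed point one reads off $\|Q\|_{X_{2,50}} \lesssim \ve$ and the formula \eqref{constantthm}, with $\omega_0 > 0$ following from $\omega_0^2 = 1 + O(\ve)$.
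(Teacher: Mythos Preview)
Your proposal is correct and follows essentially the same route as the paper: rewrite the equation so the principal part is $\partial_s - (\text{const}\cdot q_e)\partial_\psi^2$, straighten to the standard heat operator via the arclength-type change $t=\int q_e\,\rmd s$, read off the Feynman--Lagerstrom selection of $\mu$ (equivalently $\overline{\omega}_0$) from the zero-mode solvability condition, and close a Picard iteration in the weighted space $X_{2,50}$ using high-order energy estimates. The only cosmetic differences are that the paper writes the remainder as $\bigl(1-\tfrac{\omega_0 q_e}{\sqrt{\omega_0^2 q_e^2+Q}}\bigr)\partial_s Q_{n-1}$ rather than $(q-q_e)\partial_\psi^2 P$ (equivalent via the equation itself), carries $\omega_{0,n-1}q_e$ rather than $q_e$ in the linear coefficient, and bootstraps with exponents $\ve^{0.99},\ve^{0.97}$ before sharpening to $\ve,\ve^2$ a posteriori; none of this changes the architecture you describe.
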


  This theorem, proved in \S \ref{mainthmproof}, provides the first rigorous confirmation of the Feynman--Lagerstrom formula, and justifies their claim that for $|f-q_e |\ll |f|$ (translating to  $\ve\ll 1$), the leading term in \eqref{FLcond} serves as a good approximation for the selected vorticity.
  
 The constant  $\cc$ satisfies  \eqref{ccond}, and has an explicit component,  determined by $q_e (s)$ and $f(s)$, as well as an implicit component $\overline{\omega}_{{\rm Err}}$ which is smaller amplitude and for which we obtain bounds.  We emphasize that the $\omega_0$ appearing in \eqref{theo:sameer:trinh:2} is \textit{nonlinearity selected} as soon as the domain, $M$, is no longer a disk (for example, $M$ is an ellipse). This requires, at an analytical level, a delicate coupling between the choice of constant, $\omega_0$, and the control of the solution $Q$ in an appropriately chosen norm, which is the main innovation of our work.\footnote{In this respect, the selection mechanism is similar to another arising in fluid dynamics: inviscid damping \cite{BM}.  There, perturbations to certain stable shear flows return to equilibrium in a weak sense, but the which equilibrium they converge to must determined together with the entire time history of the solution.  }

 We anticipate the Prandtl system, which we analyze in this paper, to be stable in the inviscid limit for the full Navier-Stokes system.   Indeed, this is what is proved in \cite{Lin1} when $M$ is a disk and $u_e = x^\perp$. However, as discussed above, in that very special setting the constant $\cc$ can be explicitly determined \eqref{formfc}.  In general, this is not the case and $\cc$ is only implicitly determined by the condition \eqref{ccond} described above, making the inviscid limit more delicate.  Nevertheless, we believe that the nonlinearly determined constant $\omega_0$ will describe, to leading order in viscosity, the selection principle. That is, we believe that Navier-Stokes vorticity $\omega^{\nu}$ should obey an asymptotic expansion
 \begin{align}\label{vortexp}
 \omega^{\nu} = \omega_0 + O(\sqrt{\nu})
 \end{align}
 in the interior of the domain.
 In fact, we issue the following
 \begin{conj}\label{conj}
\textit{ Let $M\subset \mathbb{R}^2$ be any simply connected domain such that the constant vorticity Euler solution $u_*$ on $M$ defined by \eqref{constvorteuler} has a single eddy (streamfunction has a single, non-degenerate, critical point). Suppose that        Navier-Stokes is forced by a slip of the form  \eqref{formulaf}, e.g. $ f =  u_*\cdot \tau +  \ve g$ for some smooth function $g:\partial M \to \mathbb{R}$.  Then, there exists an $\ve_*:=\ve_*(M, g)$ such that for all $\ve<\ve_*$ we have weak convergence in $L^2(M)$ }
 \be
  u^{\nu} \wc \omega_0  u_* \qquad \text{as} \qquad \nu \to 0
 \ee
\textit{along a sequence of steady Navier--Stokes solutions, where $\omega_0:=\omega_0(M,g)$ is \eqref{constantthm} of Thm \ref{thmbl}.}
 \end{conj}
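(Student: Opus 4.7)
The plan is to follow the strategy of Fei–Gao–Lin–Tao \cite{Lin1} (who handled the disk case) and extend it to a general simply connected domain, using Theorem \ref{thmbl} as the building block for a stationary Navier–Stokes approximate solution and then proving nonlinear stability by a weighted energy method.

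\textbf{Step 1 (Approximate solution).} Let $\Psi_*$ be the streamfunction of $u_*$, let $(s,n)$ be boundary-fitted coordinates in a tubular neighborhood of $\pa M$, and set $\psi := \cc \Psi_*/\sqrt\nu$ as the rescaled Prandtl coordinate. Using the profile $q(s,\psi)$ and constant $\cc$ from Theorem \ref{thmbl}, construct
\[
u^\nu_{\rm app} = \cc u_* + \chi(n/\sqrt\nu)\bigl(q(s,\psi)-\cc q_e(s)\bigr)\hat\tau + \sqrt\nu\, u^{(1)} + \cdots,
\]
where $\chi$ is a smooth cut-off, and the correctors $u^{(k)}$ together with appropriate normal components enforce incompressibility and cancel the leading Navier–Stokes residuals. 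The nonlinear self-consistency identity \eqref{ccond} is exactly the solvability condition for closing the matching, so the constant $\cc$ supplied by Theorem \ref{thmbl} is precisely what the construction needs; the bound $\|Q\|_{X_{2,50}}\lesssim \ve$ provides enough regularity to control $u^\nu_{\rm app}$ and its Navier–Stokes residual in the Sobolev spaces required below.

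\textbf{Step 2 (Nonlinear stability and extraction of the limit).} Write $u^\nu = u^\nu_{\rm app}+\nu^\alpha R$ for some $\alpha\in(0,1)$, and derive the forced steady equation for $R$ with zero Dirichlet data. Existence and smallness of $R$ follow from a Banach–Picard / Leray–Schauder argument in a weighted conormal Sobolev space, driven by a two-scale estimate for the linearization
\[
\L R = -\nu\Delta R + u^\nu_{\rm app}\!\cdot\!\na R + R\!\cdot\!\na u^\nu_{\rm app} + \na P.
\]
Near $\pa M$, $\L$ has the structure of a linearized Prandtl operator with shear $q$ (non-vanishing by Theorem \ref{thmbl}) and is controlled by conormal energy estimates; in the bulk, $\L$ is a perturbation of transport by $\cc u_*$, whose linearized vorticity equation enjoys spectral coercivity under the single-eddy hypothesis (closed streamlines foliating $M$ away from a unique non-degenerate critical point of $\Psi_*$). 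Interfacing the two estimates via $\chi$ gives $\|u^\nu - u^\nu_{\rm app}\|_{L^2(M)}\lesssim \nu^\alpha$. Combined with the boundary-layer bound $\|u^\nu_{\rm app}-\cc u_*\|_{L^2(M)}\lesssim \ve\,\nu^{1/4}$ (amplitude $O(\ve)$ on a layer of thickness $\sqrt\nu$), this yields strong convergence in $L^2(M)$ and, in particular, the weak convergence $u^\nu \wc \cc u_*$ stated in the conjecture.

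\textbf{Main obstacle.} The crux is the interaction between the implicitness of $\cc$ and the stability analysis. Two interlocking difficulties arise. First, the higher-order correctors $u^{(k)}$ come from linearized Prandtl problems whose solvability regenerates a linearized version of \eqref{ccond} at each order, so a hierarchy of constants $\cc_k$ must be selected simultaneously with the profiles and shown to converge — a multi-scale fixed-point procedure coupled to the outer Navier–Stokes solve, reminiscent of (and technically heavier than) the iteration used inside Theorem \ref{thmbl}. Second, the remainder equation is quasilinear, and its tangential regularity near $\pa M$ is accessible only through the degenerate structure of the Prandtl operator, so an intrinsic conormal energy estimate adapted to the level sets of $\Psi_*$ must be developed. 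The single-eddy hypothesis enters essentially here: it guarantees that the level sets of $\Psi_*$ foliate $M$ smoothly outside a shrinking neighborhood of the unique non-degenerate critical point, so that conormal coordinates can be used globally after excising a small disk around the stagnation point and matched smoothly to the boundary-layer coordinates. Executing these two items rigorously is what would upgrade Theorem \ref{thmbl} into a proof of Conjecture \ref{conj}.
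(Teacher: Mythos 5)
The statement you are addressing is not a theorem of the paper: it is stated as Conjecture \ref{conj}, and the authors explicitly say that verifying the vorticity expansion needed to prove it ``will require substantially new ideas,'' with even the elliptical case described as work in preparation. So there is no proof in the paper to compare against, and your submission is, by its own admission, a program rather than a proof. As a program it is the natural one (boundary-layer expansion built from Theorem \ref{thmbl} plus a remainder estimate, following \cite{Lin1}), but every step that carries the mathematical weight is asserted rather than established, so there is a genuine gap -- indeed the gap is the entire content of the conjecture.

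Concretely: in Step 1 you claim that the self-consistency identity \eqref{ccond} ``is exactly the solvability condition for closing the matching'' at all orders, but the higher-order correctors $u^{(k)}$ solve \emph{linearized} Prandtl problems whose periodicity constraints are new conditions, not \eqref{ccond} itself; you acknowledge this in your ``main obstacle'' paragraph, which means Step 1 is not actually carried out. In Step 2 the decisive claim is that the linearization around $\cc u_*$ in the bulk ``enjoys spectral coercivity under the single-eddy hypothesis.'' No such coercivity is known for the steady linearized problem around a general constant-vorticity eddy; the disk analysis of \cite{Lin1} leans on the exact solid-body structure (explicit $q_e=\mathsf{R}/2$, angular Fourier decomposition, and the explicit Batchelor--Wood constant \eqref{formfc}), none of which survives on a domain of non-constant curvature, where $\cc$ is only implicitly determined through \eqref{ccond} and must be tracked inside the remainder estimate. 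The interfacing of the conormal boundary-layer estimate with the bulk estimate near the stagnation point, and the claimed bound $\|u^\nu-u^\nu_{\rm app}\|_{L^2}\lesssim \nu^{\alpha}$, are likewise stated without argument. In short, your outline correctly identifies where the difficulties lie, but resolving them is precisely the open problem the paper poses, so the proposal cannot be accepted as a proof of Conjecture \ref{conj}.
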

 Of course, stronger convergence can be expected, along with a boundary layer description such as that established by \cite{Lin1} on the disk. The fact that moving boundaries can stabilize the inviscid limit is a well known phenomenon from the work of Guo and Nguyen \cite{GN} and Iyer \cite{Iyer1,Iyer2}.
Verifying the expansion \eqref{vortexp} to prove the above conjecture will require substantially new ideas.  In the context of elliptical domains $M$, this is work in preparation.

Finally we remark that the failure of a boundary layer to exist is indicative of the existence of multiple eddies: constant vorticity regions are separated by internal layers which can be thought of as free boundaries. This can happen either if the constant vorticity solution on that domain has multiple eddies, or if  the given slip data is far from that of a constant vorticity slip (according to our Theorem \ref{thmbl}). Kim \cite{Kim99} showed that if the Navier-Stokes boundary slip is only slightly negative in places, the Prandtl-Batchelor theory still applies to good approximation in the bulk.  For the situation of being far from compatible slip data, see Kim and Childress  \cite{ChildressKim} for an analytical investigation on a rectangle,  Greengard and Kropinski \cite{GK} for a numerical investigation on disk domains, and Henderson, Lopez and Stewart \cite{Henderson} for laboratory experiments.

\section{Derivation of the Prandtl boundary layer equation}
\label{derivation}

In this section, we derive the Prandtl equations for any simply connected domain $M$. Assume that $s:[0,L]\to \partial M$ be the arc-length parametrization of the boundary $\partial M$. For $s\in \mathbb T_L$, let $\tau(s)$ and $n(s)$ be unit the tangential vector to the boundary $\partial M$. There exists $\delta>0$ such that for any $x\in M$ such that ${\rm dist}(x,\pt M)<\delta$, there exists a unique $s\in \mathbb T_L$ and $x(s)\in \pt M$ such that 
\[
{\rm dist}(x,\pt M)=|x-x(s)|.
\]
Moreover, one has the representation 
\[
x(s,z)=x(s)+zn(s),
\]
where $x(s)\in \pt M$ and $z={\rm dist}(x,\pt M)$, see \cite{BNNT}. The map
\[\bega
\{x\in M:\quad 0<{\rm dist}(x,\pt M)<\delta\}&\to \mathbb T_L\times (0,\delta)\\
x&\to (z,s)
\enda 
\] 
is a diffeomorphism.  We also define the following quantities for the domain $M$: 
\[
\gamma(s)=x_1''(s)x_2'(s)-x_1'(s)x_2''(s),\quad J(z,s)=1+z\gamma(s)>0,
\]
where $\gamma$ represents the boundary curvature, and $J$ is a Jacobian for a near-wall mapping used to derived the following form of Navier-Stokes, see \cite{BNNT} and Appendix \ref{derivationNS}.

Now for $x=x(s,z)$, we denote $\tau(s)$ and $n(s)$ to be the tangential and normal vector at $x(s)\in \pt M$ on the boundary. 
Consider the steady Navier-Stokes equations 
\[\bega 
u^\nu\cdot\nabla u^\nu+\nabla p^\nu&=\nu\Delta u^\nu,\\
\nabla\cdot u^\nu&=0,
\enda 
\]
written in the region ${\rm dist}(x,\pt M)<\delta$. We define 
\[\bega
u_\tau(s,z)&=u^\nu(x)\cdot \tau(s)=u^\nu(x(s,z))\cdot\tau(s),\\
u_n(s,z)&=u^\nu(x) \cdot n(s)=u^\nu(x(s,z))\cdot n(s).
\enda
\]
By direct calculation, provided in Appendix \ref{derivationNS}, the Navier-Stokes equations become 
\[
\bega 
&\frac{u_\tau}{J}\pt_s u_\tau+u_n\pt_z u_\tau-\frac{\gamma}{J}u_\tau u_n+\frac 1 J \pt_s p\\
&\quad=\nu\lw\{\frac 1 J \pt_z (J\pt_z u_\tau)+\frac 1 J \pt_s\lw(\frac 1 J \pt_s u_\tau
\rw)-\frac 1 J \pt_s\lw(\frac{\gamma u_n}{J}
\rw)-\frac{\gamma}{J}\lw(\gamma u_\tau+\pt_s u_n
\rw)
\rw\}\\
&\frac{u_\tau}{J} \pt_s u_n+u_n\pt_z u_n -\frac{\gamma}{J}u_\tau^2+\pt_z p\\
&\quad =\nu\lw\{\frac 1 J \pt_z\lw(J\pt_z u_n
\rw)+\frac 1 J \pt_s\lw(\frac 1 J \pt_s u_n
\rw)-\frac 1 J \pt_s\lw(\frac{\gamma u_\tau}{J}
\rw)-\frac{\gamma}{J}\lw(\pt_s u_\tau-\gamma u_n
\rw)
\rw\}\\
&\pt_z u_n+\frac 1 J \pt_s u_\tau-\frac \gamma J u_n=0.
\enda 
\]
\begin{rem}
On the disk with the usual polar coordinates $(\theta,r)\in \mathbb T\times [0,1]$, we have 
\[
\gamma(\theta)=-1,\quad J(z,\theta)=\frac{1}{r},\qquad u_\tau=u_\theta,\quad u_n=u_r.
\]
\end{rem}
Near the boundary, in a layer of width $\sqrt{\nu}$, we anticipate that Navier-Stokes  velocity field $(u_\tau, u_n)$ will look like a small boundary layer correction $(u_\tau^P, v_n^P)$,  to a constant vorticity $(\w_0q_e ,0)$ Euler flow, as discussed in the introduction.  That is,
\[
u_\tau(s,z)\sim \w_0q_e (s)+u_\tau^P\lw(s,Z\rw),\qquad u_n(s,z)\sim \sqrt\nu v_n^P\lw(s,Z\rw),\qquad Z=\frac{z}{\sqrt\nu},
\]
where $\lim_{Z\to \infty}u_\tau^P(s,Z)=0$.  See discussion in Oleinik and  Samokhin \cite{Oleinik}.
Plugging in this ansatz into the Navier-Stokes equations near the boundary, and using the approximation 
\[
\tfrac 1 J=\tfrac {1}{1+z\gamma(s)}=\tfrac 1 {1+\sqrt\nu Z\gamma(s)}\sim 1-\sqrt\nu Z \gamma(s)+O(\nu),
\]
we obtain the equations 
\beq\label{Pr-1-Tr}
\bega 
(\w_0q_e (s)+u_\tau^P)\pt_s(\w_0q_e (s)+u_\tau^P)+v_n^P\pt_Z (q_e (s)+u_\tau^P)+\pt_s p-\pt_Z^2 u_\tau^P=0,\qquad \pt_Z p=0
\enda
\eeq
along with the divergence free condition 
\[
\pt_s(\w_0 q_e (s)+u_\tau^P)+\pt_Z v_n^P=0
\]
Taking $Z\to \infty$ in the equation \eqref{Pr-1-Tr}, we obtain 
\[
\pt_s p=-\w_0^2 q_e (s)q_e '(s)
\]
Replacing the pressure by the above into the equation \eqref{Pr-1-Tr}, we obtain the \textit{Prandtl equations}:
\begin{align}\label{Pr-2-Tr}
(\w_0 q_e (s)+u_\tau^P)\pt_s(\w_0 q_e (s)+u_\tau^P)+v_n^P\pt_Z (\w_0q_e (s)+u_\tau^P)-\w_0^2 q_e (s)\eta_e'(s)-\pt_Z^2 u_\tau^P&=0\\
\pt_s(\w_0q_e (s)+u_\tau^P)+\pt_Z v_n^P&=0.
\end{align}
Define the von Mises variables $(s,\psi)$ such that
\[
\pt_Z \psi(s,Z)=\w_0 q_e (s)+u_\tau^P(s,Z),\qquad -\pt_s \psi(s,Z)=v_n^P(s,Z).
\]
Let $q=q(s,\psi)=\w_0 q_e (s)+u_\tau^P$, the Prandtl equation becomes
\[
q(\pt_s q-v_n^P\pt_\psi q)+qv_n^P \pt_\psi q-\frac 1 2 \pt_s\lw(\w_0^2 q_e (s)^2
\rw)-q\pt_\psi(q\pt_\psi q)=0
\] 
which reduces to 
\[
\pt_s q^2-\pt_s q_e ^2 -q\pt_\psi q^2 =0.
\]
Let $Q=q^2-\w_0^2 q_e ^2$, the above equation becomes \eqref{vmeqn}, namely
\[
\pt_s Q-q\pt_\psi^2 Q=0,\qquad Q=q^2-\w_0^2 q_e ^2.
\]

 \begin{figure}[h!]
 \centering
 \includegraphics[width=0.678\textwidth]{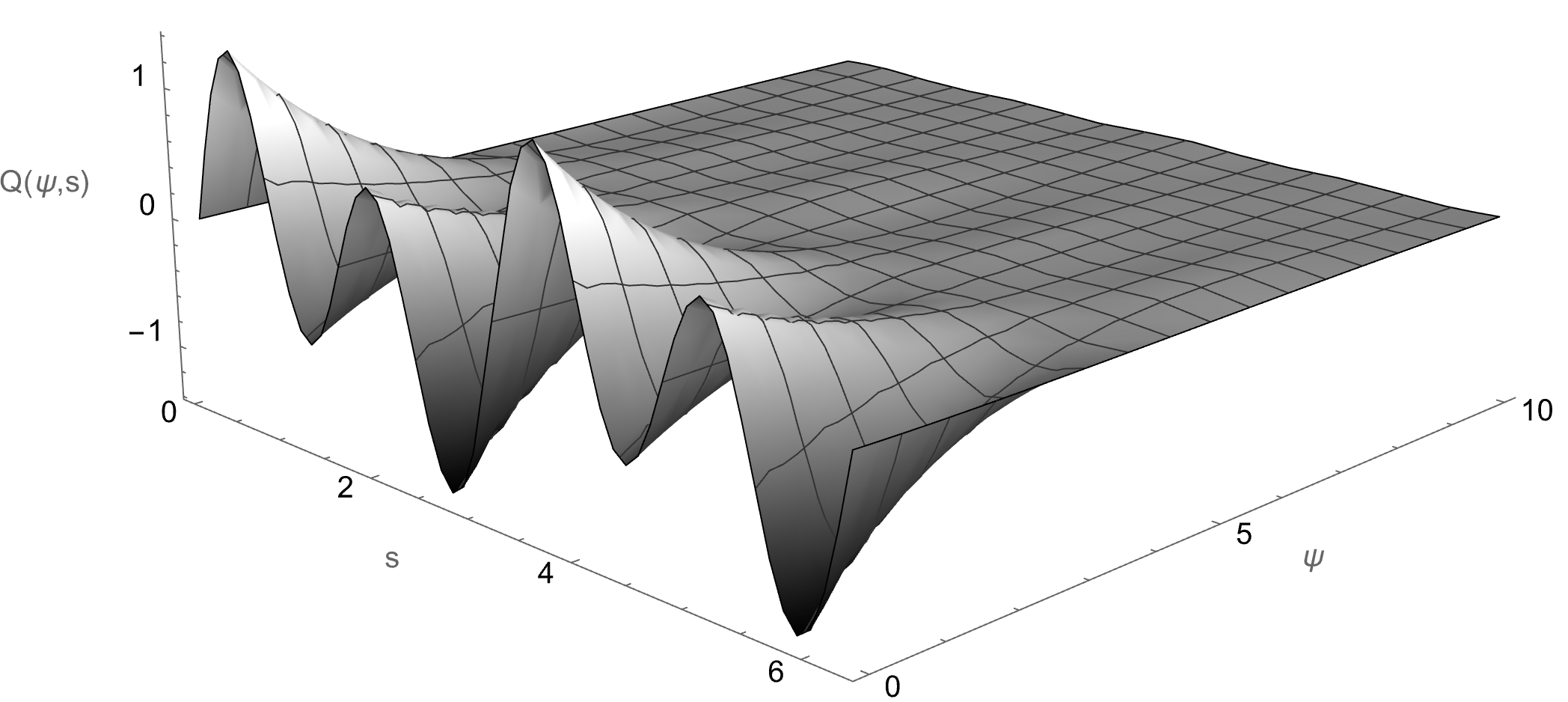} \hspace{-5mm}
  \caption{Numerical solution $Q$ of \eqref{vmeqn} for some representative data $q_e $ and $f$. Thinking of $s$ at ``time", the equation is nonlinear heat sourced at the wall $\psi=0$.  Decay away from the boundary is rapid (exponential) in $\psi$. }
 \end{figure}

\section{Proof of Theorem \ref{absturb}: Absence of turbulence} \label{noturbthm}

Let $v^\nu= u^\nu -  u_e$ be the difference of solutions of Euler and Navier-Stokes, where Navier-Stokes is forced by Euler's slip velocity.  On general domains $M$, it satisfies
   \begin{align}
\partial_t v^\nu+ (v^\nu+ u_e)\cdot\nabla v^\nu +   v^\nu \cdot \nabla   u_e&= -\nabla q +\nu \Delta v^\nu  + \nu \Delta u_e  \ \ \  \ \text{in }\  M,\\
   \nabla \cdot v^\nu &=0 \qquad\qquad\qquad\qquad\qquad  \text{in }\  M,\\
v^\nu\cdot\hat{n} &= 0 \qquad\qquad\qquad\qquad\qquad \text{on}\ \partial M,\\ 
  v^\nu\cdot\hat{\tau} &= 0 \qquad\qquad\qquad\qquad \ \ \ \ \ \ \text{on}\ \partial M. 
\end{align}
Whence the error energy (which holds for Leray-Hopf solutions in dimension two) satisfies
\begin{align}
\frac{1}{2}\frac{\rmd }{\rmd t} \|v^\nu\|_{L^2}^2  &\leq  -  \int_M v^\nu\cdot \nabla u_e \cdot v^\nu \rmd x - \nu \|\nabla v^\nu\|_{L^2}^2 + \nu \int v^\nu \cdot \Delta u_e.
\end{align}
In general, we may bound
\begin{align}
\frac{1}{2}\frac{\rmd }{\rmd t} \|v^\nu\|_{L^2}^2 
&\leq\left(\|\nabla u_e\|_{L^\infty} -  \frac{\nu \lambda_1}{2}\right)  \| v^\nu\|_{L^2}^2 +  \frac{2\nu}{\lambda_1} \| \Delta u_e\|_{L^2}^2,
\end{align}
since $v^\nu|_{\partial \mathbb{D}}=0$ so we may apply the Poincar\'{e} inequality $\lambda_1 \|v^\nu\|_{L^2} \leq \|\nabla v^\nu\|_{L^2}^2$ where $\lambda_1$ is the first positive eigenvalue of $-\Delta_D$ on $M$.   We remark, using the results of \cite[Chapter 7]{Tsai} (which establish uniform bounds on the steady states), a similar energy identity can be used to prove global attraction of the unique steady state for Navier-Stokes forced by imposed slip on any domain, provided viscosity is large enough.

On the disk $M=\mathbb{D}$, if $u_e=\ue = \cc x^\perp$ so that $\nabla \ue = \cc \begin{pmatrix} 0&  -1\\ 1& 0
\end{pmatrix}$ and $\Delta u_e =0$, we have
\be
\int_\mathbb{D} v^\nu\cdot \nabla \ue \cdot v^\nu \rmd x =\int_\mathbb{D} v^\nu\cdot (v^\nu)^\perp \rmd x=0.
\ee
On the disk of radius $\mathsf{R}$, this is $\lambda_1= (j_0/\mathsf{R})^2$ where $j_0$ is the first zero of $J_0$  the Bessel function of the
first kind and order zero). We thus have the stated result.

\begin{rem}
On the ellipse
 $\ue = \cc (-y, \alpha x)$ so that $\nabla \ue = \cc \begin{pmatrix} 0&  -1\\ \alpha& 0\end{pmatrix}$ and $v \cdot \nabla \ue \cdot v= \cc (\alpha-1)v_1v_2$.  It follows that provided
 \be
 \nu>\nu_*:=\cc \lambda_1^{-1}(1-\alpha),
 \ee
 then the solid body rotation solution is the global attractor.  In particular, as the eccentricity of the elliptical domain goes to zero, $\alpha\to 1$ and the critical viscosity $\nu_*$ goes to zero.  Curiously, all flows in this elliptical family are isochronal \cite{Y}, meaning that the period of revolution of a particle does not depend on the particular streamline. As such, the form examples of cut points in group of area preserving diffeomorphisms of those domains, see discussion in \cite{DE22,DM22}.  The lack of differential rotation in the Euler solution may have important consequences for the asymptotic stability and realizability in the inviscid limit.
 \end{rem}

\section{Proof of Theorem \ref{PBthm}: Prandtl--Batchelor Theory}\label{appendix}

First, by   \cite[Lemma 5]{CDG22}, under the stated assumptions we have that
\begin{align}
\Delta \psi &= F(\psi) \qquad \text{on} \ M,\\
 \psi &= c_* \qquad \ \  \  \text{on} \ \partial M
\end{align}
for some $C^1$ function $F:\mathbb{R}\to \mathbb{R}$ and constant $c_*\in \mathbb{R}$.  Suppose without loss of generality that $\{\psi = 0\}$ is the unique critical point in $M$, so that ${\rm rang} (\psi)	= [0,c_*]$.  By the assumption \eqref{conv}, we have the convergence $\psi^\nu \to \psi$ in $H^{7/2+}(U)$ and thus in $C^1(U)$ for all interior open subsets $U\subset M$. It follows that we have convergence of the streamlines (level sets of $\psi^\nu$).  Specifically,
for any $c\in {\rm rang} (\psi)$, the set $\{\psi^\nu = c\}$ is a closed streamline (at least for sufficiently small $\nu:=\nu(c)$) converging to $\{\psi = c\}$.   In what follows, for fixed $c$ we assume $\nu$ is sufficiently small for the above to hold.

 Integrating the Navier-Stokes vorticity balance in the sublevel set  $\{\psi^\nu \leq c\}$ 
\begin{align}\nonumber
0 &=\int_{\{\psi^\nu\leq  c\}}\bigg[ u^\nu \cdot \nabla \omega^\nu  - \nu \Delta \omega^\nu \bigg] \rmd x\\ \label{zeroident}
&=\int_{\{\psi^\nu = c\}}\bigg[ (u^\nu \cdot \hat{n}^\nu)  \omega^\nu  - \nu \hat{n}^\nu\cdot\nabla \omega^\nu \bigg]\rmd \ell =- \nu \int_{\{\psi^\nu = c\}}  \hat{n}^\nu\cdot\nabla \omega^\nu \rmd \ell,
\end{align}
where $\hat{n}^\nu= \nabla \psi^\nu /|\nabla \psi^\nu|$ is the unit normal to streamlines $\{\psi^\nu = c\}$.   Thus
\begin{align}\nonumber
\int_{\{\psi = c\}}  \hat{n}\cdot\nabla \omega \rmd \ell& = \int_{\{\psi = c\}}  \hat{n}\cdot\nabla \omega \rmd \ell -  \int_{\{\psi^\nu = c\}}  \hat{n}^\nu\cdot\nabla \omega^\nu \rmd \ell \\
&= \int_{\{\psi^\nu = c\}}  \hat{n}^\nu\cdot\nabla (\omega-\omega^\nu) \rmd \ell-  \int_\Omega  \Delta F(\psi) \mathbf{1}_{{\{\psi^\nu = c\}}\Delta {\{\psi = c\}} }   \rmd x
\end{align}
 where $A \Delta B$ denotes the symmetric difference between two sets.
Under our assumptions, there exists an open set $O\subset M$ containing the streamline $\{\psi^\nu = c\}$ uniformly in $\nu$. By the trace theorem, 
\be
\int_{\{\psi^\nu = c\}} n^\nu\cdot\nabla (\omega^\nu-\omega) \rmd \ell \lesssim \|n^\nu\|_{H^{1/2+}(O)} \|\omega^\nu-\omega\|_{H^{3/2+}(O)}\lesssim \|\omega^\nu\|_{H^{3/2}(O)} \|\omega^\nu-\omega\|_{H^{3/2+}(O)}.
\ee
Combined with the fact that $\omega= F(\psi)$, we find
\begin{align}\nonumber
F'(c)  \int_{\{\psi = c\}} u \cdot \rmd \ell &=      \int_{\{\psi^\nu = c\}}  \Delta (\omega^\nu- F(\psi))  \rmd x -   \int \Delta F(\psi) \mathbf{1}_{{\{\psi^\nu = c\}}\Delta {\{\psi = c\}} }   \rmd x.
 \end{align}
Thus, for any $\delta>0$, we have the bound
\begin{align}
\left|F'(c)   \int_{\Gamma(c)} u \cdot \rmd s  \right| &\lesssim \  \|\omega^\nu\|_{H^{3/2}(O)} \|\omega^\nu-\omega\|_{H^{3/2+}(O)}  \\
&\qquad + \|  \Delta F(\psi) \|_{L^{1+\delta} (M)} \left({\rm Area} \left({\{\psi^\nu = c\}}\Delta {\{\psi = c\}} \right)\right)^{1/\delta}.
\end{align}
Consequently, using \eqref{conv} and taking the limit of the upper bound, we have
\be
F'(c)  \int_{\Gamma(c)} u \cdot \rmd s =0.
\ee

By our hypotheses that $\psi$ has a single stagnation point $\{\psi =0\}$ in $M$, the circulation $ \oint_{\{\psi = c\}} u \cdot    \rmd \ell \neq 0$ for all $c\neq 0$. Thus, since $F'$ is continuous, we must have that $F'(c)=0$ for all  $c\in {\rm rang} (\psi)$ so that  $F= \cc$ for some $\cc\in \mathbb{R}$.  
 
\section{Proof of Theorem \ref{thmbl}} \label{mainthmproof}

\subsection{Iteration and Bootstraps}
Here we produce a unique solution $(Q,\cc)$ of
\begin{subequations}
\begin{align} \label{Preqn}
\partial_s Q - q\partial_\psi^2 Q&= 0, \\
 Q &:= q^2-\omega_0^2  q_e ^2,\\  \label{data1} 
Q(s,0) &=   \ve\overline{\cc} q_e ^2(s) + 2\ve g(s)q_e (s) + \ve^2 g^2(s)\\ \label{data2} 
 Q(s,\infty) &=0,
 \end{align}
 \end{subequations}
 on $(s,\psi)\in \mathbb{T} \times \mathbb{R}^+$,
 for arbitrary $g:\mathbb{T}\to \mathbb{R}$ and  sufficiently small $\ve:= \ve(g;q_e )$.  Here, $\cc$ is to be determined together with $Q$, and we introduced  $\overline{\cc}\in \mathbb{R}$ (anticipated to be an $O(1)$ quantity as it depen\rmd s on $\ve$) defined by
 \begin{align}
 1-\omega_0^2 =   \ve \overline{\cc}.  
 \end{align}

 To prove this result, it is convenient to rewrite \eqref{Preqn} as 
 \be\label{vmeqnNew}
\partial_s Q -\cc q_e \partial_\psi^2 Q =
 \left(1-  \tfrac{\cc q_e }{\sqrt{\omega_0^2  q_e ^2+ Q }}\right)\partial_s Q.
\ee
We will study of following iteration scheme
\begin{align}
\label{approx}
\partial_s Q_n -\cc_{n-1} q_e \partial_\psi^2 Q_n &=
 \left(1-  \tfrac{\cc_{n-1}  q_e }{\sqrt{\cc_{n-1} ^2 q_e ^2+ Q_{n-1} }}\right)\partial_s Q_{n-1}, \\
Q_n(s,0) &=   (1-  \cc_n^2) q_e ^2(s) + 2\ve g(s)q_e (s) + \ve^2 g^2(s)\\
& = \ve \overline{\cc}_n q_e ^2(s) + 2\ve g(s)q_e (s) + \ve^2 g^2(s) \\
 Q_n(s,\infty) &=0,
 \end{align}
 with   $Q_{-1} = 0$, $\cc_{-1}= 1$. 
 Schematically, we think that $(\cc_{n-1}, Q_{n-1})  \mapsto \cc_n \mapsto Q_n$, that is $\cc_n$ is determined on the onset by a compatibility condition for the linear problem which depen\rmd s on the prior iterate, and $Q_n$ is subsequently solved for $\cc_{n-1}$. Let
\begin{align} \label{hgyut:1}
f(Q, s;\cc)&:=\left(1-  \tfrac{\cc  q_e }{\sqrt{\omega_0^2  q_e ^2+ Q }}\right)\partial_s Q=  \left(1 - \sqrt{1 - \frac{Q}{\omega_0^2  q_e ^2 + Q}}\right) \p_s Q.
\end{align}

In this system $\cc_n$ is chosen to enforce that 
 \begin{align} \label{bar:choice}
  \overline{\cc}_n= \frac{  \fint_{\partial M}  (2 g q_e ^2 + \ve g^2q_e ) \rmd s}{ \fint_{\partial M}  q_e ^3} -  \frac{1}{\cc_{n-1}\ve}\frac{\int_0^\infty y  f(Q_{n-1}(y,s), s;\cc_{n-1}) \rmd y}{ \fint_{\partial M}  q_e ^3}.
 \end{align}
Conceptually, it is clearer to separate out the explicit component of $\overline{\cc}_n$, which is $O(1)$ and independent of $n$, and the smaller amplitude implicit component of $\overline{\cc}_n$ as follows 
\begin{align} 
  \overline{\cc}_n = &\overline{\omega}_{0\ast} +\overline{\cc}_{{\rm Err}, n}, \\  \label{aug:1:1}
\overline{\omega}_{0\ast} := & \frac{  \fint_{\partial M}  (2 g q_e ^2 + \ve g^2q_e ) \rmd s}{ \fint_{\partial M}  q_e ^3},  \\ \label{aug:1:2}
\overline{\cc}_{{\rm Err}, n} := & -   \frac{1}{\cc_{n-1}\ve}\frac{\int_0^\infty y \int_0^{L} f(Q_{n-1}(y,s), s;\cc_{n-1}) \rmd s\rmd y}{ \fint_{\partial M}  q_e ^3}.
\end{align}
With this, we can solve the above equation for $Q_n$.
For $\psi\ge 0$,  we define 
\[
\la \psi\ra=1+\psi
\]
For a function $f=f(s,\psi)$ defined on $\mathbb T_L\times \mathbb R_+$, we define 
\begin{align}
\|f\|_{X_{k,m}}^2&=\sum_{k'=0}^k\sum_{m'=0}^m\lw\{ \|\la \psi\ra^{m'}\pt_s^{k'}f\|_{L^2(\mathbb T_L\times \mathbb R_+)}^2+\|\la \psi\ra^{m'}\pt_s^{k'+1}f\|_{L^2(\mathbb T_L\times \mathbb R_+)}^2\rw\}\\ \label{spaceXkm}
&\quad+\sum_{k'=0}^k\sum_{m'=0}^m\lw\{ \|\la\psi\ra^{m'}\pt_\psi\pt_s^{k'}f\|_{L^2(\mathbb T_L\times \mathbb R_+)}^2+\|\la\psi\ra^{m'}\pt_\psi^2\pt_s^{k'}f
\|_{L^2(\mathbb T_L\times \mathbb R_+)}^2
\rw\}
\end{align}
We will construct the unique solution of the equation \eqref{vmeqnNew} in the space $X_{2,50}$. By the standard Sobolev embedding, we also have 
\[
\|f\|_{L^\infty}\lesssim\|f\|_{L^2}+\|\pt_sf\|_{L^2}+\|\pt_\psi f\|_{L^2}+\|\pt_s\pt_\psi f\|_{L^2}\lesssim\|f\|_{X_{1,0}}
\]
 \begin{rem}[Exponential decay of $Q$ for $\psi\gg1$]
In fact, one can prove existence in a space encoding exponential decay in $\psi$, as should be expect for a (nonlinear) heat equation with data at $\psi=0$.  For simplicity of presentation, we prove only algebraic decay but the requisite modifications involving exponential weights are standard.
 \end{rem}

 Indeed, we have the following result that tells us this iteration is well-defined.
\begin{lem} Let $n \ge 0$. Assume that $Q_{n-1} \in X_{2,50}$, and that \eqref{mainbd:1}--\eqref{mainbd:2} are valid until  index $n -1$. Assume further that $\overline{\cc}_{n}$ is defined according to \eqref{bar:choice}. Then, there exists a unique solution, $Q_n$ to the system \eqref{approx}. 
\end{lem}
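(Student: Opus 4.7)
The plan is to treat \eqref{approx} as a linear Dirichlet problem on $\mathbb{T}_L\times\mathbb{R}_+$, with $s$ playing the role of a periodic ``time'' variable and $\psi$ the spatial one, and to solve it by Fourier decomposition in $s$; the condition \eqref{bar:choice} will play the role of a Fredholm solvability constraint in the zero mode. By the inductive hypothesis $Q_{n-1}\in X_{2,50}$ and the bootstrap bounds \eqref{mainbd:1}--\eqref{mainbd:2}, one has $\omega_{n-1}^2\approx 1$ and $\omega_{n-1}^2 q_e^2+Q_{n-1}$ uniformly bounded below, so the effective diffusion $\omega_{n-1}q_e>0$ is forward-parabolic (using $q_e>0$), and $F_{n-1}:=f(Q_{n-1},s;\omega_{n-1})$ is a smooth, algebraically decaying source inheriting the weighted control of $Q_{n-1}$.

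I would begin by rectifying the coefficient $q_e(s)$ via the change of variable $d\tilde s=q_e(s)\,ds$, legitimate by smoothness and non-vanishing of $q_e$, which turns \eqref{approx} into the constant-diffusion parabolic equation
\[
\partial_{\tilde s}Q_n-\omega_{n-1}\partial_\psi^2 Q_n=F_{n-1}/q_e\quad\text{on }\mathbb{T}_{\tilde L}\times\mathbb{R}_+,
\]
with $\tilde L=\int_0^L q_e\,ds$ and the same Dirichlet/decay data. Expanding $Q_n(\tilde s,\psi)=\sum_{k\in\mathbb{Z}}\hat Q_{n,k}(\psi)e^{2\pi ik\tilde s/\tilde L}$, each $\hat Q_{n,k}$ solves a second-order ODE in $\psi$. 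For $k\neq 0$, the two homogeneous solutions grow/decay exponentially at rate $\sim\sqrt{|k|/(\omega_{n-1}\tilde L)}$, and variation of parameters against the decaying one yields a unique solution matching the Dirichlet datum, with Green's-function estimates summable in $k$ and compatible with algebraic weights in $\psi$.

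The $k=0$ mode satisfies $-\omega_{n-1}\hat Q_{n,0}''(\psi)=\hat F_{n-1,0}(\psi)$; integrating twice against $\hat Q_{n,0}(\infty)=0$ determines $\hat Q_{n,0}'(0)$ freely (to kill the linear growth) and produces the scalar solvability constraint
\[
\omega_{n-1}\hat Q_{n,0}(0)=-\int_0^\infty y\,\hat F_{n-1,0}(y)\,dy,
\]
which upon reverting to $s$-variables and substituting the explicit datum \eqref{data1} becomes, after regrouping, the defining identity \eqref{bar:choice} for $\overline{\omega}_n$. Since this is satisfied by hypothesis, the zero mode is uniquely determined by double integration, $\hat Q_{n,0}(\psi)=-\omega_{n-1}^{-1}\int_\psi^\infty(y-\psi)\hat F_{n-1,0}(y)\,dy$, and reassembling the Fourier series produces a candidate $Q_n$.

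Uniqueness and the $X_{2,50}$ regularity then follow from weighted $L^2$ energy estimates performed directly in the original $(s,\psi)$ variables: for each admissible pair $(k',m')$, one differentiates \eqref{approx} $k'$ times in $s$, multiplies by $\la\psi\ra^{2m'}\partial_s^{k'}Q_n$ and $\la\psi\ra^{2m'}\partial_\psi\partial_s^{k'}Q_n$, integrates over $\mathbb{T}_L\times\mathbb{R}_+$, and uses $s$-periodicity together with positivity of $\omega_{n-1}q_e$ to absorb the dissipation and yield control of the full $X_{2,50}$-norm. Applied to the difference $\delta Q_n$ of two solutions with vanishing data and source, the same identity forces $\omega_{n-1}\int_{\mathbb{T}_L}q_e\int_0^\infty(\partial_\psi \delta Q_n)^2\,d\psi\,ds=0$, hence $\partial_\psi\delta Q_n\equiv 0$, and the vanishing Dirichlet data gives $\delta Q_n\equiv 0$. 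The principal technical obstacle I anticipate is propagating the polynomial weight $\la\psi\ra^{50}$ through the double-integral construction of the zero mode, since each antiderivative consumes two orders of decay; one must carefully extract very high $\psi$-moments of $F_{n-1}$ from $\|Q_{n-1}\|_{X_{2,50}}$ together with the uniform lower bound on $\omega_{n-1}^2q_e^2+Q_{n-1}$, which is precisely why the problem is posed at the high weight index $m=50$.
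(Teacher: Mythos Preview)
Your proposal is correct and follows essentially the same route as the paper: rectify the variable coefficient via $dt=\cc_{n-1}q_e(s)\,ds$ (you use $d\tilde s=q_e\,ds$, which differs only by a constant), Fourier decompose in the new periodic variable, solve the $k\neq 0$ modes by the explicit Green's function $e^{-\sqrt{ik}\psi}$, and identify the $k=0$ solvability constraint with \eqref{bar:choice}. The paper's proof is terser and stops once the explicit mode-by-mode formula is written down; your additional remarks on uniqueness via energy identities and on weight propagation are handled in the paper not in this lemma but in the subsequent abstract estimates (Proposition~\ref{pro:pro:1} and Lemma~\ref{hyu:1}).
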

\begin{proof}We first of all write the system \eqref{approx} as follows
\begin{align}
\label{approx:simp}
\partial_s Q_n -\cc_{n-1} q_e \partial_\psi^2 Q_n &= F_{n-1}, \\
Q_n(s,0) &=   b_{n-1}(s),\\
 Q_n(s,\infty) &=0,
 \end{align}
 We introduce the variable 
 \begin{align}
 t = J(s), \qquad \frac{dt}{ds} = J'(s) = \cc_{n-1}  q_e (s) 
 \end{align}
 We notice that $\frac{dt}{ds}$ is bounded above and below and hence determines an invertible transformation due to the fact that $\cc_{n-1}  q_e (s) > 0$. We also note that by writing $\cc_{n-1}  q_e (s) = \langle \cc_{n-1}  q_e  \rangle + (\cc_{n-1}  q_e  - \langle \cc_{n-1}  q_e  \rangle)$, we have 
 \begin{align}
 t = \langle\cc_{n-1}  q_e  \rangle s + \int_0^s \cc_{n-1}  (q_e  - \langle q_e  \rangle) ds', 
 \end{align}
 which maps $\mathbb{T}_L$ into $\mathbb{T}_{\cc_{n-1}  \langle q_e  \rangle L}$. We next introduce 
 \begin{align}
 V_n(t, \psi) = V_n(J(s), \psi) = Q_n(s, \psi).
 \end{align}
 This object satisfies the system 
 \begin{align}
 \p_t V_n - \p_\psi^2 V_n = \frac{F_{n-1}}{\cc_{n-1} q_e } =: G_{n-1}.
 \end{align}
 We expand the solution of $V_n$ in a Fourier basis in the $t$ variable as follows
 \begin{align}
 ik \widehat{V}^k_n - \p_\psi^2 \widehat{V}^k_n = \widehat{G}_{n-1}^{k}.
 \end{align}
 The zero mode equation is exactly the Feynman-Lagerstrom formula, \eqref{bar:choice}. For the $k$'th mode, where $k \neq 0$, we write the explicit formula: 
 \[
 \hat V_n^k=e^{-\sqrt{ik}\psi}\hat b_{n-1}^k+\frac{1}{2\sqrt{ik}}\int_0^\infty \lw(e^{-\sqrt{ik}(\psi+\psi')}-e^{-\sqrt{ik}|\psi-\psi'|}\rw)\hat G_{n-1}^kd\psi'
 \]
 where $\sqrt{ik}$ is the complex square root of $ik$ with positive real part.
We now observe that for $G_{n-1} \in X_{2,50}$, the above integrals converge to zero as $\psi \rightarrow \infty$ (when $k \neq 0$). This completes the proof.
\end{proof}
 We define the differences $\triangle  \overline{\cc}_n$ and $\triangle Q_n$ to be 
 \[\bega 
 \triangle  \overline{\cc}_n&=  \overline{\cc}_{n-1}-   \overline{\cc}_{n} \\
 &=   \frac{1}{\cc_{n-1}\ve}\frac{\int_0^\infty y  \int_0^{L} f(Q_{n-1}(y,s), s;\cc_{n-1}) \rmd s\rmd y}{ \fint_{\partial M}  q_e ^3}\\ \label{this:form}
  &\qquad -   \frac{1}{\cc_{n-2}\ve}\frac{\int_0^\infty y \int_0^{L} f(Q_{n-2}(y,s), s;\cc_{n-2}) \rmd s\rmd y}{ \fint_{\partial M}  q_e ^3}.
  \enda
   \]
   \[
   \triangle Q_n=Q_{n}-Q_{n-1}
   \]
Differences in $Q$ obey:
\begin{align}
\partial_s (Q_n-Q_{n-1})  -&\cc_{n-1} q_e \partial_\psi^2 (Q_n- Q_{n-1})  \\
&=
 \left(1-  \tfrac{\cc_{n-1}  q_e }{\sqrt{\cc_{n-1} ^2 q_e ^2+ Q_{n-1} }}\right)\partial_s Q_{n-1}-  \left(1-  \tfrac{\cc_{n-2}  q_e }{\sqrt{\cc_{n-2} ^2 q_e ^2+ Q_{n-2} }}\right)\partial_s Q_{n-2}, \\ \label{diffapprox}
&\qquad +  ( \cc_{n-1} -\cc_{n-2})  \partial_\psi^2 Q_{n-1}, \\
(Q_n-Q_{n-1}) (s,0) &=   \ve ( \overline{\cc}_{n}-  \overline{\cc}_{n-1}) q_e ^2(s),\\
 Q_n(s,\infty) &=0.
 \end{align}
 \begin{rem}[Obtaining the sharper bounds stated in Theorem \ref{thmbl}]
In what follows, we will bootstrap bounds of  $\ve^{1-}$, specifically $\ve^{0.97}$ for $|\overline{\cc}_{{\rm Err}, n}|$ and $\ve^{0.99} $ for $ \|Q_{n} \|_{X_{2, 50}}$, although any power less than 1 would suffice by the same argument given below.  This is not essential, it is to avoid keeping track of large constants for simplicity of the bootstrap argument.  In fact, from these bounds one can deduce a posteriori sharper estimates of the form $  |\overline{\cc}_{{\rm Err}, n}| \le  C_1 \ve $ and $ \|Q_{n} \|_{X_{2, 50}} \leq C_2 \ve$ for some, possibly large, constants $C_1,C_2>0$ by taking the proved bounds on $\cc$ and $Q$, returning to the equation, and performing the estimate again.
 \end{rem}
We will establish the following bounds 
 \begin{align} \label{mainbd:1}
  |\overline{\cc}_{{\rm Err}, n}| &\le \ve^{0.97} \\  \label{mainbd:2}
 \|Q_{n} \|_{X_{2, 50}} &\le \ve^{0.99} \\  \label{mainbd:3}
| \Delta \overline{\cc}_{n}| &\le \ve^{1.97} |\Delta \overline{\cc}_{n-1}| +  \ve^{-0.02} \| \Delta Q_{n-1} \|_{X_{1,4}}\\  \label{mainbd:4}
  \| \Delta Q_n \|_{X_{2, 50}} &\le   \ve^{\frac34} \| \Delta Q_{n-1} \|_{X_{2,50}} + \ve^{\frac12} |\Delta \overline{\cc}_n| + \ve^{\frac32} |\Delta \overline{\cc}_{n-1}|,
 \end{align}
 which immediately imply the main result. The bounds \eqref{mainbd:1} -- \eqref{mainbd:2} show that $(\overline{\cc}_{{\rm Err},n}, Q_n) \in B_{\ve^{1.99}, \ve^{0.99}} \subset \mathbb{R} \times X_{2, 50}$, whereas the bounds \eqref{mainbd:3} -- \eqref{mainbd:4} show that iteration converges to a unique fixed point. A standard fixed point result imply that these bootstrap bounds give the main theorem:
 \begin{proof}[Proof of Theorem \ref{thmbl}] We insert the bound \eqref{mainbd:3} into the second term on the right-hand side of \eqref{mainbd:4} in order to get the following 
 \begin{align}
 | \Delta \overline{\cc}_{n}| &\le \ve^{1.97} |\Delta \overline{\cc}_{n-1}| +  \ve^{-0.02} \| \Delta Q_{n-1} \|_{X_{2,50}} \\
  \| \Delta Q_n \|_{X_{2, 50}} &\le  \ve^{\frac14} \| \Delta Q_{n-1} \|_{X_{2,50}} + \ve^{\frac32} |\Delta \overline{\cc}_{n-1}|.
 \end{align}
 Define $\vec{Y}_n := (\Delta Q_n, \ve \Delta \overline{\cc}_n) \in X_{2,50} \times \mathbb{R}$, endowed with the product norm. Then 
 \begin{align}
 \| \vec{Y}_n \|_{X_{2,50} \times \mathbb{R}} \le \ve^{\frac15}\| \vec{Y}_{n-1} \|_{X_{2,50} \times \mathbb{R}}.
 \end{align}
 It is therefore clear that 
 \begin{align}
  \| \vec{Y}_n \|_{X_{2,50} \times \mathbb{R}} \le &  \ve^{\frac{n}{5}}\| \vec{Y}_{0} \|_{X_{2,50} \times \mathbb{R}} \\
  \le &    \ve^{\frac{n}{5}} (    \| Q_{-1} \|_{X_{2,50}} +   \| Q_0 \|_{X_{2,50}} + \ve |\overline{\cc}_{-1}| +  \ve |\overline{\cc}_0|) \\
  \le & C\ve^{\frac{n}{5}}.
 \end{align}
 This then implies that $(Q_n, \overline{\cc}_{n})$ is a Cauchy sequence in $X_{2,50} \times \mathbb{R}$, and hence converges to a limit, $(Q_{\infty}, \overline{\cc}_{\infty})$. We can therefore pass to the limit in equation \eqref{approx} as well as in \eqref{bar:choice} to conclude that $(Q_\infty, \overline{\cc}_{\infty})$ satisfy the system \eqref{Preqn}. 
 
 We now prove uniqueness. We assume that $(Q_{1}, \cc_{,1})$ and $(Q_2, \cc_{,2})$ are two solutions to \eqref{Preqn} in the space $X_{2,50} \times \mathbb{R}$. We may therefore write an analogous equation \eqref{diffapprox} on $Q_1 - Q_2$ (without the iteration), which reads: 
 \begin{align}
\partial_s (Q_1-Q_{2})  -&\cc_{,1} q_e \partial_\psi^2 (Q_1- Q_{2})  \\
&=
 \left(1-  \tfrac{\cc_{,1}  q_e }{\sqrt{\cc_{,1} ^2 q_e ^2+ Q_{1} }}\right)\partial_s Q_{1}-  \left(1-  \tfrac{\cc_{,2}  q_e }{\sqrt{\cc_{,2} ^2 q_e ^2+ Q_{2} }}\right)\partial_s Q_{2}, \\ \label{dhgyu:1}
&\qquad +  ( \cc_{,1} -\cc_{,2})  \partial_\psi^2 Q_{2}, \\
(Q_1-Q_{2}) (s,0) &=   \ve ( \overline{\cc}_{,1}-  \overline{\cc}_{,2}) q_e ^2(s)\\
(Q_1 - Q_2)(s,\infty) &=0
 \end{align}
as well as the analogue of expression \eqref{this:form} (again without the iteration)
\begin{align}
  \overline{\cc}_{1}-   \overline{\cc}_{2} &=   \frac{1}{\cc_{,2}\ve}\frac{\int_0^\infty y  \int_0^{L} f(Q_{2}(y,s), s;\cc_{,2}) \rmd s\rmd y}{ \fint_{\partial M}  q_e ^3}\\
  &\qquad -   \frac{1}{\cc_{,1}\ve}\frac{\int_0^\infty y \int_0^{L} f(Q_{1}(y,s), s;\cc_{,1}) \rmd s\rmd y}{ \fint_{\partial M}  q_e ^3}.\end{align}
 Re-applying the \textit{a-priori} estimates on these systems results in the following bounds: 
 \begin{align}
  | \overline{\cc}_{,1} - \overline{\cc}_{,2}| \le & \eps^{1.97} | \overline{\cc}_{,1} - \overline{\cc}_{,2}| + \eps^{-0.02} \| Q_1 - Q_2 \|_{X_{2,50}}, \\
 \| Q_1 - Q_2 \|_{X_{2,50}} \le & \ve^{\frac34} \| Q_1 - Q_2 \|_{X_{2,50}} + \ve^{\frac12} | \overline{\cc}_{,1} - \overline{\cc}_{,2}|,
 \end{align} 
 which are the analogues of \eqref{mainbd:3} - \eqref{mainbd:4}. The two bounds above clearly imply that $ \overline{\cc}_{,1} = \overline{\cc}_{,2}$ and $Q_1 = Q_2$. This proves uniqueness. 
\end{proof}

 \subsection{$\overline{\cc}_{{\rm Err}, n}$ estimates} 
 
 Here, we will establish the bootstrap bound \eqref{mainbd:1}. Indeed,  
 \begin{lem} Assume \eqref{mainbd:1} -- \eqref{mainbd:4} are valid until  the index $n - 1$. Then $\overline{\cc}_{{\rm Err},n}$ satisfies: 
 \begin{align} \label{boots:cerr}
 |\overline{\cc}_{{\rm Err}, n}| \le \ve^{0.97}.
 \end{align}
 \end{lem}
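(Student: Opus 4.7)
Using \eqref{aug:1:2}, the main task is to bound
\[
I_n := \int_0^\infty y \int_0^{L} f(Q_{n-1}(y,s), s;\cc_{n-1})\, \rmd s\, \rmd y,\qquad f(Q, s;\cc) = \left(1-\sqrt{1-\tfrac{Q}{\cc^2 q_e^2 + Q}}\right)\partial_s Q,
\]
and then divide by $\cc_{n-1} \ve \fint q_e^3$. The inductive hypotheses \eqref{mainbd:1}--\eqref{mainbd:2} give $|\cc_{n-1}-1|\lesssim \ve$ (so $\cc_{n-1}$ is bounded away from $0$), and $q_e$ is uniformly non-vanishing by hypothesis, so the prefactor $(\cc_{n-1} \fint q_e^3)^{-1}$ is $O(1)$. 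The task therefore reduces to showing $|I_n|\lesssim \ve^{1.98}$; dividing by $\ve$ then yields $|\overline{\cc}_{{\rm Err}, n}|\lesssim \ve^{0.98}\le \ve^{0.97}$ for $\ve$ sufficiently small.

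\textbf{Quadratic vanishing of $f$.} The structural feature I will exploit is that $f$ vanishes to \emph{second} order as $Q\to 0$. Using the identity $1 - (1+x)^{-1/2} = \tfrac{x}{2}+O(x^2)$ together with the pointwise smallness $\|Q_{n-1}\|_{L^\infty}\lesssim \|Q_{n-1}\|_{X_{1,0}}\le \ve^{0.99}$ (from the embedding noted right after \eqref{spaceXkm}), which is $\ll \cc_{n-1}^2 q_e^2$ for $\ve$ small, a Taylor expansion gives
\[
1 - \tfrac{\cc_{n-1} q_e}{\sqrt{\cc_{n-1}^2 q_e^2 + Q_{n-1}}} = \tfrac{Q_{n-1}}{2\cc_{n-1}^2 q_e^2} + R_{n-1},\qquad |R_{n-1}|\lesssim Q_{n-1}^2,
\]
so $|f|\lesssim |Q_{n-1}||\partial_s Q_{n-1}|$ pointwise with a uniform constant.

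\textbf{Closing the estimate.} Applying Cauchy--Schwarz in $s$,
\[
\left|\int_0^L f(Q_{n-1}, s; \cc_{n-1})\, \rmd s\right| \lesssim \|Q_{n-1}(\cdot,y)\|_{L^2_s}\, \|\partial_s Q_{n-1}(\cdot,y)\|_{L^2_s}.
\]
Splitting the weight $y = \langle y\rangle^{50}\cdot \langle y\rangle^{-49}$ and using $\int_0^\infty \langle y\rangle^{-99}\, \rmd y<\infty$,
\[
|I_n| \lesssim \|\langle\psi\rangle^{50} Q_{n-1}\|_{L^\infty_\psi L^2_s}\, \|\langle\psi\rangle^{50} \partial_s Q_{n-1}\|_{L^2_{\psi,s}} \lesssim \|Q_{n-1}\|_{X_{2,50}}^2 \le \ve^{1.98},
\]
where the $L^\infty_\psi L^2_s$ factor is controlled by a one-dimensional Sobolev embedding in $\psi$ using the $\partial_\psi$ derivatives encoded in $X_{2,50}$. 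This gives the desired $|I_n|\lesssim \ve^{1.98}$, and the bound \eqref{boots:cerr} follows.

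\textbf{Main obstacle.} The delicate point is the extraction of the \emph{second} power of $Q_{n-1}$ from $f$: a bound $|f|\lesssim |\partial_s Q_{n-1}|$ would give only $|I_n|\lesssim \ve^{0.99}$ and thus $|\overline{\cc}_{{\rm Err},n}|\lesssim \ve^{-0.01}$, destroying the iteration. It is the smallness of $Q_{n-1}$ relative to the background $\cc_{n-1}^2 q_e^2$ that supplies this extra factor. I note that to sharpen this to the quadratic scaling $|\omega_{\rm Err}|\lesssim \ve^2$ claimed in Theorem~\ref{thmbl}, one would additionally integrate by parts in $s$ to rewrite $\int_0^L \tfrac{Q_{n-1}\partial_s Q_{n-1}}{q_e^2}\, \rmd s = \int_0^L \tfrac{Q_{n-1}^2\, \partial_s q_e}{q_e^3}\, \rmd s$, exposing the curvature factor $\partial_s q_e$ whose vanishing on the disc is the Feynman--Lagerstrom structural cancellation; this refinement is not required for the present bootstrap bound.
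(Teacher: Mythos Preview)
Your proof is correct and follows essentially the same idea as the paper's: extract the quadratic structure $f\sim Q_{n-1}\,\partial_s Q_{n-1}$ from the prefactor $1-\cc_{n-1}q_e/\sqrt{\cc_{n-1}^2q_e^2+Q_{n-1}}$, then use the bootstrap $\|Q_{n-1}\|_{X}\le\ve^{0.99}$ twice to obtain $|I_n|\lesssim\ve^{1.98}$. The only cosmetic difference is that the paper places the prefactor directly in $L^\infty$ (via $|1-\sqrt{1-x}|\le|x|$) and pairs it with $\|\partial_s Q_{n-1}\langle\psi\rangle^4\|_{L^2}$, whereas you Taylor-expand and then do a weighted Cauchy--Schwarz in $\psi$ with the heavier weight $\langle\psi\rangle^{50}$; both routes land on the same $\ve^{0.99}\cdot\ve^{0.99}/\ve$ arithmetic.
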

 \begin{proof} Recall the expression \eqref{aug:1:2}, after which we estimate as follows 
 \begin{align}
  |\overline{\cc}_{{\rm Err}, n}| &\le \frac{1}{\ve} \left|\int_0^\infty y \int_0^{L} f(Q_{n-1}(y,s), s;\cc_{n-1}) \rmd s\rmd y\right| \\
 & \lesssim \frac{1}{\ve} \left\| \left(1-  \tfrac{\cc_{n-1}  q_e }{\sqrt{\cc_{n-1}^2 q_e ^2+ Q_{n-1} }}\right)\partial_s Q_{n-1} \langle \psi \rangle^4 \right\|_{L^2} \\
& \lesssim \frac{1}{\ve} \left\| \left(1-  \tfrac{\cc_{n-1}  q_e }{\sqrt{\cc_{n-1}^2 q_e ^2+ Q_{n-1} }}\right) \|_{L^\infty} \| \partial_s Q_{n-1} \langle \psi \rangle^4 \right\|_{L^2} \\
 &= \frac{1}{\ve} \left\| 1 - \sqrt{1 - \frac{Q_{n-1}}{\cc_{n-1}^2 q_e ^2 + Q_{n-1}}}\right\|_{L^\infty} \left\| \p_s Q_{n-1} \langle \psi \rangle^4 \right\|_{L^2} \\
 &\lesssim  \frac{1}{\ve} \left\| \frac{Q_{n-1}}{\cc_{n-1}^2 q_e ^2 + Q_{n-1}}\right\|_{L^\infty} \left\| \p_s Q_{n-1} \langle \psi \rangle^4 \right\|_{L^2}  \lesssim  \frac{1}{\ve} \ve^{0.99} \ve^{0.99} <  \ve^{0.97},
 \end{align}
 where we have invoked the bootstrap bound \eqref{mainbd:2} in the final step, as well as the $L^\infty$ estimate 
 \begin{align}
 \left\| \frac{Q_{n-1}}{\cc_{n-1}^2 q_e ^2 + Q_{n-1}}\right\|_{L^\infty} &\lesssim  \frac{1}{\cc_{n-1}^2 q_e ^2 - \| Q_{n-1} \|_{L^\infty}} \| Q_{n-1} \|_{L^\infty} \\
& \lesssim  \frac{1}{\cc_{n-1}^2 q_e ^2 - \| Q_{n-1} \|_{X_{1,0}}} \| Q_{n-1} \|_{X_{1,0}} \\
& \lesssim  \frac{1}{\cc_{n-1}^2 q_e ^2 - C \ve^{0.99}} \ve^{0.99} \lesssim \ve^{0.99}.
 \end{align}
Above, we have used the following Sobolev inequality on $\mathbb{T} \times \mathbb{R}_+$, which reads $\| f \|_{L^\infty} \lesssim \| f \|_{X_{1,0}}$. This Sobolev embedding will be used repeatedly to estimate nonlinear terms.
 \end{proof}
 
  \subsection{$\Delta \overline{\cc}_{n}$ estimates} 
  
  Here we prove the following lemma. 
  \begin{lem} Assume  \eqref{mainbd:1} -- \eqref{mainbd:4} are valid until  the index $n - 1$. Assume \eqref{mainbd:1} and \eqref{mainbd:2} are valid until  index $n$. Then the following bound hol\rmd s 
  \begin{align}
  |\Delta \overline{\cc}_n| \le \eps^{1.97} |\Delta \overline{\cc}_{n-1}| +  \ve^{-0.02} \| \Delta Q_{n-1} \|_{X_{1,4}}
  \end{align}
  \end{lem}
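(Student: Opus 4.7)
The plan is to telescope the formula \eqref{this:form} so as to separate the change in $Q$ from the change in $\cc$. Writing $I[Q;\cc] := \int_0^\infty y\int_0^L f(Q(y,s),s;\cc)\,\rmd s\,\rmd y$ for the nonlinear integral appearing in \eqref{this:form}, one decomposes
\begin{align}
\fint_{\partial M} q_e ^3 \cdot \Delta \overline{\cc}_n
 &= \underbrace{\tfrac{I[Q_{n-1};\cc_{n-1}] - I[Q_{n-2};\cc_{n-1}]}{\cc_{n-1}\ve}}_{=:\, T_1}
 + \underbrace{\tfrac{I[Q_{n-2};\cc_{n-1}] - I[Q_{n-2};\cc_{n-2}]}{\cc_{n-1}\ve}}_{=:\, T_2}
 + \underbrace{\Bigl(\tfrac{1}{\cc_{n-1}} - \tfrac{1}{\cc_{n-2}}\Bigr)\tfrac{I[Q_{n-2};\cc_{n-2}]}{\ve}}_{=:\, T_3}.
\end{align}
The term $T_1$ will produce the $\|\Delta Q_{n-1}\|_{X_{1,4}}$ contribution, while $T_2$ and $T_3$ will produce the $|\Delta \overline{\cc}_{n-1}|$ contribution.

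For $T_1$, the integrand $f(Q,s;\cc) = \bigl(1 - \sqrt{1 - Q/(\cc^2 q_e ^2 + Q)}\bigr)\partial_s Q$ vanishes \emph{quadratically} at $Q = 0$. Writing $Q_\theta := \theta Q_{n-1} + (1-\theta) Q_{n-2}$ and differentiating in $\theta$, the resulting coefficients $\partial_Q f$ and $\partial_{\partial_s Q} f$ are pointwise controlled by $|Q_\theta| + |\partial_s Q_\theta|$, which are $O(\ve^{0.99})$ in $L^\infty$ by the Sobolev embedding $\|\cdot\|_{L^\infty}\lesssim\|\cdot\|_{X_{1,0}}$ together with the bootstrap \eqref{mainbd:2}. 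Cauchy--Schwarz against the integrable weight $y\langle\psi\rangle^{-4}$ then extracts the $\langle\psi\rangle^4$-weighted $L^2$ norms contained in $\|\Delta Q_{n-1}\|_{X_{1,4}}$, giving $|T_1| \lesssim \ve^{-1}\cdot\ve^{0.99}\cdot\|\Delta Q_{n-1}\|_{X_{1,4}} \leq \ve^{-0.02}\|\Delta Q_{n-1}\|_{X_{1,4}}$.

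For the $\Delta\cc$ pieces $T_2, T_3$, direct computation yields $\partial_\cc f = -q_e  Q\, \partial_s Q / (\cc^2 q_e ^2 + Q)^{3/2}$, which vanishes \emph{linearly} at $Q = 0$, so $|\partial_\cc f|\lesssim |Q||\partial_s Q|$ and the same weighted Cauchy--Schwarz argument used in \eqref{boots:cerr} gives $\int_0^\infty y\int_0^L|\partial_\cc f|\,\rmd s\,\rmd y \lesssim \ve^{1.98}$. The key input is the identity $\cc_{n-1}^2 - \cc_{n-2}^2 = \ve(\overline{\cc}_{n-2} - \overline{\cc}_{n-1}) = \ve\,\Delta\overline{\cc}_{n-1}$, which yields $|\cc_{n-1} - \cc_{n-2}|\lesssim \ve|\Delta\overline{\cc}_{n-1}|$ and likewise $|\tfrac{1}{\cc_{n-1}}-\tfrac{1}{\cc_{n-2}}|\lesssim \ve|\Delta\overline{\cc}_{n-1}|$. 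Substituting into $T_2$ gives $|T_2|\lesssim \ve^{-1}\cdot\ve^{1.98}\cdot\ve|\Delta\overline{\cc}_{n-1}| = \ve^{1.98}|\Delta\overline{\cc}_{n-1}|$, and $T_3$ satisfies the same bound after using $|I[Q_{n-2};\cc_{n-2}]|/\ve\lesssim \ve^{0.98}$ (already contained in the proof of \eqref{boots:cerr}). Summing the three bounds and absorbing constants into $\ve^{1.97}$ closes the claim.

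The main obstacle is the compound smallness accounting in $T_2, T_3$. The factor $\ve^{0.99}$ from the linear vanishing of $\partial_\cc f$ at $Q=0$ is by itself \emph{insufficient} -- it would only produce $\ve^{0.97}|\Delta\overline{\cc}_{n-1}|$, which is not contractive. The additional factor of $\ve$ improving this to $\ve^{1.97}|\Delta\overline{\cc}_{n-1}|$ is hidden in the selection relation $\cc^2 = 1 - \ve\overline{\cc}$, which forces perturbations of $\overline{\cc}$ to translate into perturbations of $\cc$ that are \emph{already $\ve$-small}. Recognizing this cancellation is what permits the iteration to contract despite the deceptively large $\ve^{-0.02}$ prefactor appearing on the $\|\Delta Q\|_{X_{1,4}}$ term in the companion estimate \eqref{mainbd:4}.
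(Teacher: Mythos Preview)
Your proposal is correct and follows essentially the same strategy as the paper: telescope the formula for $\Delta\overline{\cc}_n$, exploit the quadratic vanishing of $f$ at $Q=0$ together with the bootstrap $\|Q\|_{X_{2,50}}\le\ve^{0.99}$, and use the selection relation $1-\cc^2=\ve\overline{\cc}$ to convert $|\cc_{n-1}-\cc_{n-2}|$ into $\ve|\Delta\overline{\cc}_{n-1}|$.

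The only difference is in the telescoping. The paper uses a two-piece split $I_1+I_2$: first pull out the change in the prefactor $1/\cc$ (your $T_3$, with $Q_{n-1}$ in place of $Q_{n-2}$), then handle the change in the integrand $f(Q_{n-1};\cc_{n-1})-f(Q_{n-2};\cc_{n-2})$ by a further add--subtract into $BD_1$ (change $\p_s Q$, keep the prefactor $1-\sqrt{1-U}$) and $BD_2$ (change the prefactor, where $Q$ and $\cc$ are varied simultaneously via the algebraic identity $\tfrac{Q}{\cc^2 q_e^2+Q}-\tfrac{\overline{Q}}{\overline{\cc}^2 q_e^2+\overline{Q}}=\tfrac{\overline{\cc}^2 Q-\cc^2\overline{Q}}{(\cc^2 q_e^2+Q)(\overline{\cc}^2 q_e^2+\overline{Q})}$). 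Your three-piece split with the interpolation $Q_\theta$ for $T_1$ and the explicit $\p_\cc f$ for $T_2$ is a cleaner bookkeeping of the same effect; it avoids the algebraic manipulation in $BD_2$ at the cost of one extra term. Both routes yield the same powers of $\ve$ and the same final bound. One minor remark: to control $\|\p_s Q_\theta\|_{L^\infty}$ via Sobolev embedding you actually need $\|\p_s Q_\theta\|_{X_{1,0}}\le\|Q_\theta\|_{X_{2,0}}$, i.e.\ the full $X_{2,50}$ bootstrap rather than just $X_{1,0}$; this is available and does not affect your conclusion.
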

\begin{proof} We use the expression 
 \begin{align*}
\Delta (\overline{\cc}_{n}) &= \overline{\cc}_{n-1}-   \overline{\cc}_{n}\\
& :=   \frac{1}{\cc_{n-1}\ve}\frac{\int_0^\infty y  \int_0^{L} f(Q_{n-1}(y,s), s;\cc_{n-1}) \rmd s\rmd y}{ \fint_{\partial M}  q_e ^3}-   \frac{1}{\cc_{n-2}\ve}\frac{\int_0^\infty y \int_0^{L} f(Q_{n-2}(y,s), s;\cc_{n-2}) \rmd s\rmd y}{ \fint_{\partial M}  q_e ^3} \\
&=   \frac{1}{\cc_{n-1}\ve}\frac{\int_0^\infty y  \int_0^{L} f(Q_{n-1}(y,s), s;\cc_{n-1}) \rmd s\rmd y}{ \fint_{\partial M}  q_e ^3} -  \frac{1}{\cc_{n-2}\ve}\frac{\int_0^\infty y  \int_0^{L} f(Q_{n-1}(y,s), s;\cc_{n-1}) \rmd s\rmd y}{ \fint_{\partial M}  q_e ^3} \\
& \quad+  \frac{1}{\cc_{n-2}\ve}\frac{\int_0^\infty y  \int_0^{L} f(Q_{n-1}(y,s), s;\cc_{n-1}) \rmd s\rmd y}{ \fint_{\partial M}  q_e ^3} - \frac{1}{\cc_{n-2}\ve}\frac{\int_0^\infty y \int_0^{L} f(Q_{n-2}(y,s), s;\cc_{n-2}) \rmd s\rmd y}{ \fint_{\partial M}  q_e ^3} \\
&=  I_1 + I_2. 
 \end{align*}
 To estimate $I_1$, we have 
 \begin{align}
 |I_1| &=  \frac{|\cc_{n-2} - \cc_{n-1}|}{\ve \cc_{n-1} \cc_{n-2}}\frac{|\int_0^\infty y  \int_0^{L} f(Q_{n-1}(y,s), s;\cc_{n-1}) \rmd s\rmd y|}{ \fint_{\partial M}  q_e ^3} \\
& \lesssim  \frac{1}{\eps}|\cc_{n-2} - \cc_{n-1}| \| \la\psi\ra^4 f(Q_{n-1},s; \cc_{n-1}) \|_{L^2} \\
& \lesssim  \frac{1}{\eps}|\cc_{n-2} - \cc_{n-1}| \eps^{0.99} \eps^{0.99} \\
 &= \frac{1}{\eps} |\cc_{n-2}^2 - \cc_{n-1}^2| |{\cc}_{n-2} + {\cc}_{n-1}|^{-1}  \eps^{0.99} \eps^{0.99} \\
 &\lesssim  \frac{1}{\eps} \eps |\overline{\cc}_{n-2} - \overline{\cc}_{n-1}|  \eps^{0.99} \eps^{0.99} \\
& \le  \frac12 \eps^{1.97} |\Delta \overline{\cc}_{n-1}|.
 \end{align}
To estimate $I_2$, we need to use the identity \eqref{hgyut:1} to estimate 
\[\bega 
f(Q, \cc) - f(\overline{Q}, \overline{\cc}) 
&=  \left(1 - \sqrt{1 - \frac{Q}{\omega_0^2  q_e ^2 + Q}}\right) \p_s Q - \left(1 - \sqrt{1 - \frac{\overline{Q}}{\overline{\cc}^2 q_e ^2 + \overline{Q}}}\right) \p_s \overline{Q} \\
&=   \left(1 - \sqrt{1 - \frac{Q}{\omega_0^2  q_e ^2 + Q}}\right) \p_s Q - \left(1 - \sqrt{1 - \frac{Q}{\omega_0^2  q_e ^2 + Q}}\right) \p_s \overline{Q} \\
& \quad+  \left(1 - \sqrt{1 - \frac{Q}{\omega_0^2  q_e ^2 + Q}}\right) \p_s \overline{Q} -  \left(1 - \sqrt{1 - \frac{\overline{Q}}{\overline{\cc}^2 q_e ^2 + \overline{Q}}}\right) \p_s \overline{Q} \\
&= BD_1 + BD_2.
\enda
\]
Clearly, using the inequality $|1 - \sqrt{1-x}| \le |x|$ for $x\le 1$, we have 
\begin{align}
\| BD_1 \langle \psi \rangle^4 \|_{L^2} &\lesssim  \lw\| 1 - \sqrt{1 - \frac{Q}{\omega_0^2  q_e ^2 + Q}}\rw\|_{L^\infty} \| \p_s (Q - \overline{Q}) \langle \psi \rangle^4 \|_{L^2} \\
&\lesssim  \| Q \|_{X_{1,0}} \| Q - \overline{Q} \|_{X_{0,4}}.
\end{align}
and, using the inequality $|\sqrt{1-x} - \sqrt{1-y}| \lesssim |x - y|$ for $x, y \ll 1$, we have 
\begin{align}
\| BD_2 \langle \psi \rangle^4 \|_{L^2} &\lesssim  \| \p_s \overline{Q} \langle \psi \rangle^4 \|_{L^2} \lw\| \frac{Q}{\omega_0^2  q_e ^2 + Q} - \frac{\overline{Q}}{\overline{\cc}^2 q_e ^2 + \overline{Q}}\rw \|_{L^\infty} \\
& \lesssim  \| \p_s \overline{Q} \langle \psi \rangle^4 \|_{L^2} \lw\|\frac{\overline{\cc}^2 Q -\omega_0^2  \overline{Q}}{(\omega_0^2  q_e ^2 + Q)(\overline{\cc}^2 q_e ^2 + \overline{Q})}\rw\|_{L^\infty} \\
&  \lesssim  \| \p_s \overline{Q} \langle \psi \rangle^4 \|_{L^2}\lw \| Q - \overline{Q} \rw\|_{L^\infty} +  \| \p_s \overline{Q} \langle \psi \rangle^4 \|_{L^2} \|Q \|_{L^\infty} |\omega_0^2  - \overline{\cc}^2| \\
  &\lesssim  \| \overline{Q} \|_{X_{0,4}} \| Q - \overline{Q} \|_{X_{1,0}} + \| \overline{Q} \|_{X_{0,4}} \| Q \|_{X_{1,0}} |\omega_0^2  - \overline{\cc}^2|.
\end{align}

Therefore, we have 
\begin{align}
|I_2| &\lesssim  \frac{1}{\eps}  \| Q_{n-1} \|_{X_{1,0}} \| Q_{n-1} - Q_{n-2} \|_{X_{0,4}} + \frac{1}{\eps} \| Q_{n-2} \|_{X_{0,4}} \| Q_{n-1} - Q_{n-2} \|_{X_{1,0}} \\
&\quad+ \frac{1}{\eps} \| Q_{n-2} \|_{X_{0,4}} \| Q_{n-1} \|_{X_{1,0}} | \cc_{n-1}^2 - \cc_{n-2}^2| \\
&\lesssim  \frac{1}{\ve} \eps^{0.99} \| \Delta Q_{n-1} \|_{X_{1,4}} + \frac{1}{\ve} \eps^{0.99} \eps^{0.99} \eps |\overline{\cc}_{n-1} - \overline{\cc}_{n-2}| \\
&\le  \ve^{-0.02} \| \Delta Q_{n-1} \|_{X_{1,4}} + \frac{\ve^{1.97}}{2} |\overline{\cc}_{n-1} - \overline{\cc}_{n-2}|.
\end{align}

Pairing these bounds together, we get the desired result. 
\end{proof}

\subsection{Abstract $Q$ Estimates}
 
 For future use, it turns out we will have a need to develop our estimates on a slightly more abstract system. Therefore, we consider 
\begin{align} \label{real:1}
\p_s Q - \cc q_e  \p_\psi^2 Q = & F + \p_{\psi}^2 G \\ 
Q(s, 0) = & b(s) \\
Q(s, \infty) = & 0. 
\end{align}
We develop a high-order energy method to treat equation \eqref{real:1}. We commute $\p_s^k$ to obtain 
\begin{align} \label{real:2}
\p_s Q^{(k)} - \cc q_e  \p_\psi^2 Q^{(k)} = & F^{(k)} + \p_{\psi}^2 G^{(k)} + A_{\text{comm},k} \\ 
Q^{(k)}(s, 0) = & b^{(k)}(s), \\
Q^{(k)}(s, \infty) = & 0,
\end{align}
where the commutator term 
\begin{align}
A_{\text{comm},k} := \sum_{k' = 0}^{k-1} \binom{k}{k'} \p_s^{k-k'} q_e  \p_\psi^2 Q^{(k')},
\end{align}
and where we adopt the short-hand $f^{(k)}(s, \psi) := \p_s^k f(s, \psi)$ for an abstract function $f(s, \psi)$. 

 \begin{prop} \label{pro:pro:1} Assume that the boundary condition $b(s)$ and the source term $F + \p_{\psi}^2 G$ satisfy the Feynman-Lagerstrom compatibilty condition  \eqref{bar:choice}. Then the solution $Q$ to \eqref{real:1} obeys the following inequality: 
 \begin{align} \label{sam:ad:1}
 \| Q \|_{X_{k,m}} \lesssim \sum_{k' = 0}^k \| \p_s^{k'}  F \langle \psi \rangle^{m+4} \|_{L^2} +\sum_{k' = 0}^k \sum_{j= 0}^2 \| \p_s^{k'} \p_{\psi}^j G \langle \psi \rangle^m \|_{L^2} +\|b\|_{H^{k+1}_s} .
 \end{align}
 \end{prop}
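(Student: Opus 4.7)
Proposition \ref{pro:pro:1} is a weighted energy estimate for a linear heat-type equation in which $s$ plays the role of periodic time and $\psi$ of space. I propose to prove it in three stages.

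\textbf{Stage 1 (reduction to divergence-form source).} First, homogenize the boundary datum by subtracting $\chi(\psi) b(s)$ with $\chi$ smooth, $\chi(0)=1$, compactly supported near $\psi=0$; the commutators thus generated are controlled in all relevant norms by $\|b\|_{H^{k+1}_s}$. Next, convert the $F$-source into pure divergence form by setting
\[
\tilde F(s,\psi) := \int_\psi^\infty (y-\psi) F(s,y)\,\rmd y,
\]
so that $\p_\psi^2 \tilde F = F$ and $\tilde F, \p_\psi \tilde F$ vanish as $\psi \to \infty$. A weighted Cauchy--Schwarz estimate, splitting $F(s,y) = F(s,y)\la y\ra^{m+4} \cdot \la y\ra^{-(m+4)}$, yields
\[
\|\tilde F \la\psi\ra^m\|_{L^2} + \|\p_\psi \tilde F \la\psi\ra^m\|_{L^2} \lesssim \|F \la\psi\ra^{m+4}\|_{L^2},
\]
which explains why the weight jumps from $m$ to $m+4$ on the $F$-term in \eqref{sam:ad:1}. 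Absorbing $\tilde F$ into $G$ shifts the boundary value of $Q$ at $\psi=0$ by $\tilde F(s,0) = \int_0^\infty y F(s,y)\,\rmd y$; consistency of this modified datum with $Q(s,\infty)=0$ holds \emph{precisely} under the Feynman--Lagerstrom identity \eqref{bar:choice}.

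\textbf{Stage 2 (base energy estimate, $k=0$).} Multiply the reduced equation by $Q\la\psi\ra^{2m'}$ for $0 \le m' \le m$ and integrate over $\mathbb T_L \times \mathbb{R}^+$. Periodicity in $s$ annihilates the $\p_s Q\cdot Q$ term, and integration by parts in $\psi$ on the diffusion term produces the positive quantity
\[
\int_{\mathbb T_L}\!\int_0^\infty \cc q_e |\p_\psi Q|^2 \la\psi\ra^{2m'}\,\rmd\psi\,\rmd s,
\]
with no boundary contribution at $\psi=0$ after Stage 1. The weight commutators $Q\p_\psi Q\la\psi\ra^{2m'-1}$ are absorbed by Young's inequality, while the divergence-form source $\p_\psi^2 \mathcal G$ is handled by integrating by parts twice against $Q\la\psi\ra^{2m'}$. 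Hardy's inequality, available since $Q(s,\infty)=0$, gives $\|Q\la\psi\ra^{m'-1}\|_{L^2} \lesssim \|\p_\psi Q\la\psi\ra^{m'}\|_{L^2}$, recovering control of $Q$ itself. Finally, $\p_\psi^2 Q$ is recovered algebraically as $\p_\psi^2 Q = (\cc q_e)^{-1}(\p_s Q - F - \p_\psi^2 G)$, with $\p_s Q$ handled at the next induction step.

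\textbf{Stage 3 (induction on $k$) and main obstacle.} The $\p_s^k$-commuted system \eqref{real:2} has the same structure as \eqref{real:1}, with commutator $A_{\text{comm},k}$ involving strictly fewer $s$-derivatives, hence absorbable by the bound at order $k-1$. The boundary data $b^{(k)}$ is controlled by $\|b\|_{H^{k+1}_s}$. Iterating Stage 2 across $k' = 0,\ldots,k$ and $m' = 0,\ldots,m$ produces \eqref{sam:ad:1}. The genuine difficulty lies in Stage 1: without the compatibility \eqref{bar:choice}, the zero Fourier mode of $Q$ in the straightened variable $t = J(s)$ (where $J'(s) = \cc q_e(s)$) would satisfy $-\p_\psi^2 V^0 = (\text{source})^0$ with a source of non-zero $\psi$-moment, forcing linear growth in $\psi$ and destroying decay at infinity. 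The Feynman--Lagerstrom identity is exactly the orthogonality condition making this zero-mode integrate twice back to zero. Once this structural obstruction is cleared, the remainder of the argument is a routine weighted parabolic energy estimate.
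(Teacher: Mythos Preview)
Your plan has a genuine gap in Stage~2. The energy identity from testing with $Q\la\psi\ra^{2m'}$ produces, after integrating the weight commutator by parts, a term $+C m'^2\|Q\la\psi\ra^{m'-1}\|_{L^2}^2$ on the right with a constant that grows like $m'^2$. Your proposed Hardy step $\|Q\la\psi\ra^{m'-1}\|\lesssim\|\p_\psi Q\la\psi\ra^{m'}\|$ then feeds this straight back into the very quantity you are trying to bound, and since the implied constant is not small you cannot absorb it (for $m$ up to $50$ this certainly fails). Moreover, even if it closed, Hardy only returns $\|Q\la\psi\ra^{m-1}\|$ from $\|\p_\psi Q\la\psi\ra^m\|$, so the top weight $\|Q\la\psi\ra^m\|$ required by $X_{k,m}$ is never reached. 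Finally, the justification you give for Hardy is the wrong boundary condition: with weight $\la\psi\ra^{-1}$ one needs $Q(s,0)=0$ (which you do have after homogenization), not $Q(s,\infty)=0$.

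The paper recovers $\|Q\la\psi\ra^m\|$ from the $s$-direction, not the $\psi$-direction. One tests the (lifted) equation with $\tfrac{1}{q_e}\p_s\mathring{Q}\la\psi\ra^{2m}$; periodicity kills the cross term $\p_\psi^2 Q\cdot\p_s Q$ up to a lower-weight remainder, yielding control of $\|\p_s Q\la\psi\ra^m\|$. Poincar\'e in $s$ then gives $\|Q^{(\neq 0)}\la\psi\ra^m\|\lesssim\|\p_s Q\la\psi\ra^m\|$. The zero mode $Q^{(=0)}$ is handled by integrating the equation in $s$ and twice in $\psi$ from infinity, which produces the explicit representation
\[
\langle q_e\rangle\,Q^{(=0)}(\psi)=-\int_0^L q_e^{(\neq 0)}Q^{(\neq 0)}\,\rmd s-\tfrac{1}{\cc}\int_\psi^\infty\!\!\int_{\psi'}^\infty\!\!\int_0^L F\,\rmd s\,\rmd\psi''\rmd\psi'-\tfrac{1}{\cc}\int_0^L G\,\rmd s;
\]
this is exactly where the Feynman--Lagerstrom compatibility is used \emph{quantitatively} in the estimate (it guarantees the two constants of integration vanish), and it is why the $F$-term carries the extra $\la\psi\ra^{4}$. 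Your Stage~1 remark that compatibility ``shifts the boundary value of $Q$'' is not correct: rewriting $F=\p_\psi^2\tilde F$ does nothing to $Q$'s trace; the compatibility condition is an identity the solution already satisfies, and its role in the proof is to make the zero-mode representation above bounded, not merely to permit existence.
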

 
 The first task is we lift the boundary condition $b(s)$ by considering the lift function 
 \begin{align}
 L(s, \psi) := L[b](s, \psi) := e^{-\psi} b(s)
 \end{align}
 and consequently 
 \begin{align}
 \mathring{Q} := Q - L[b], 
 \end{align}
 which satisfies the following system 
 \begin{align} \label{hghg:1}
\p_s \mathring{Q} - \cc q_e  \p_\psi^2 \mathring{Q} = & F + \p_{\psi}^2 G + G_{\text{Lift}}\\
\mathring{Q}(s, 0) = & 0 \\
\mathring{Q}(s, \infty) = & 0
\end{align}
where 
\beq\label{G-Lift}
G_{\text{Lift}} := e^{-\psi} ( \cc q_e  b(s) -  b'(s)).
\eeq
We will need to work in higher order norms. Therefore, we present the equations upon commuting $\p_s^k$ to \eqref{hghg:1}, which yield 
 \begin{align} \label{hghg:k}
\p_s \mathring{Q}^{(k)} - \cc q_e  \p_\psi^2 \mathring{Q}^{(k)} = & F^{(k)} + \p_{\psi}^2 G^{(k)} +G_{\text{Lift}}^{(k)}+ C_{\text{comm},k}\\
\mathring{Q}^{(k)}(s, 0) = & 0 \\
\mathring{Q}^{(k)}(s, \infty) = & 0. 
\end{align}
Above, we define the commutator term as follows: 
\beq\label{C-comm}
C_{\text{comm},k} := \cc \bold{1}_{k \ge 1} \sum_{k' =0}^{k-1} \binom{k}{k'}\p_s^{k-k'} q_e  \p_\psi^2 \mathring{Q}^{(k')}.
\eeq

\begin{lem} For any $\delta >0$ the following bounds hold (where the constant $C_\delta \uparrow \infty$ as $\delta \downarrow 0$):
\begin{align} \label{bd:1}
\| \p_{\psi} Q^{(k)} \la \psi\ra^m \|_{L^2}^2& \le  C_{\delta} \| F^{(k)} \la \psi\ra^m \|_{L^2}^2 +C_{\delta} \| \p_{\psi}^2 G^{(k)} \la \psi\ra^m \|_{L^2}^2 + C_{\delta} \| b \|_{H^{k+1}_s}^2  \\
&\quad+ \delta \| Q^{(k)} \la \psi\ra^m \|_{L^2}^2 + C_\delta \bold{1}_{k \ge 1} \sum_{k' = 0}^{k-1} \| \p_\psi Q^{(k')} \la \psi\ra^m \|_{L^2}^2 + \bold{1}_{m \ge 1}\| Q^{(k)} \la \psi\ra^{m-1} \|_{L^2}^2.
\end{align}
\end{lem}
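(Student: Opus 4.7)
The strategy is to perform a weighted $L^2$ energy estimate on the lifted system \eqref{hghg:k} for $\mathring Q^{(k)}$ and then convert back via $Q^{(k)} = \mathring Q^{(k)} + L^{(k)}$ where $L(s,\psi) = e^{-\psi}b(s)$. Because the lift enjoys exponential decay in $\psi$, one has $\|\la\psi\ra^m \p_\psi^j L^{(k)}\|_{L^2} \lesssim \|b\|_{H^k_s}$ for any $j\ge 0$, and similarly the source $G_{\text{Lift}}^{(k)} = e^{-\psi}\p_s^k(\cc q_e b - b')$ (which involves at most $k+1$ derivatives of $b$) satisfies $\|\la\psi\ra^m G_{\text{Lift}}^{(k)}\|_{L^2} \lesssim \|b\|_{H^{k+1}_s}$. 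Consequently, any bound on $\|\la\psi\ra^m \p_\psi \mathring Q^{(k)}\|_{L^2}^2$ of the desired shape translates to \eqref{bd:1}, since all lift discrepancies between $\mathring Q^{(k)}$ and $Q^{(k)}$ can be absorbed into $\|b\|_{H^{k+1}_s}^2$ or into $\|\la\psi\ra^m Q^{(k)}\|_{L^2}^2$.

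First I would test \eqref{hghg:k} against $\mathring Q^{(k)} \la\psi\ra^{2m}$ and integrate over $\mathbb T_L \times \mathbb R_+$. The transport term vanishes by $s$-periodicity: $\int \p_s \mathring Q^{(k)} \cdot \mathring Q^{(k)} \la\psi\ra^{2m} = \tfrac12 \int \p_s (\mathring Q^{(k)})^2 \la\psi\ra^{2m} = 0$. Integrating by parts in $\psi$ on the diffusion term produces
\begin{equation*}
\cc \int q_e \, |\p_\psi \mathring Q^{(k)}|^2 \la\psi\ra^{2m} + 2m \cc \int q_e \, \p_\psi \mathring Q^{(k)} \, \mathring Q^{(k)} \la\psi\ra^{2m-1},
\end{equation*}
where the boundary term at $\psi = 0$ vanishes by $\mathring Q^{(k)}(s,0)=0$ and at $\psi=\infty$ by decay. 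Since $\cc$ is $O(1)$ positive and $q_e$ is uniformly positive (Theorem \ref{thmbl}), we have $\cc q_e \ge c_0 > 0$, so the first piece is coercive and controls $c_0 \|\la\psi\ra^m \p_\psi \mathring Q^{(k)}\|_{L^2}^2$, while the weighted cross term is bounded by $\eta \|\la\psi\ra^m \p_\psi \mathring Q^{(k)}\|_{L^2}^2 + C_\eta \mathbf{1}_{m\ge 1} \|\la\psi\ra^{m-1} \mathring Q^{(k)}\|_{L^2}^2$ via Young.

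For the right-hand side of \eqref{hghg:k}, pairing $F^{(k)}$ and $\p_\psi^2 G^{(k)}$ directly against $\mathring Q^{(k)} \la\psi\ra^{2m}$ and invoking Young yields $C_\delta \|\la\psi\ra^m F^{(k)}\|_{L^2}^2 + C_\delta \|\la\psi\ra^m \p_\psi^2 G^{(k)}\|_{L^2}^2 + \delta \|\la\psi\ra^m \mathring Q^{(k)}\|_{L^2}^2$. The lift source contributes $C \|b\|_{H^{k+1}_s}^2 + \delta \|\la\psi\ra^m \mathring Q^{(k)}\|_{L^2}^2$ by the exponential decay noted above. The commutator $C_{\text{comm},k}$ is where the hierarchy in $k$ appears: for each $k' < k$, one integration by parts in $\psi$ applied to $\int \p_\psi^2 \mathring Q^{(k')} \cdot \mathring Q^{(k)} \, \p_s^{k-k'} q_e \la\psi\ra^{2m}$, with the boundary piece vanishing by $\mathring Q^{(k)}(s,0)=0$, produces
\begin{equation*}
-\int \p_\psi \mathring Q^{(k')} \, \p_\psi \mathring Q^{(k)} \, \p_s^{k-k'} q_e \la\psi\ra^{2m} - 2m \int \p_\psi \mathring Q^{(k')} \, \mathring Q^{(k)} \, \p_s^{k-k'} q_e \la\psi\ra^{2m-1},
\end{equation*}
which is controlled by $\eta \|\la\psi\ra^m \p_\psi \mathring Q^{(k)}\|_{L^2}^2 + C_\eta \|\la\psi\ra^m \p_\psi \mathring Q^{(k')}\|_{L^2}^2 + C_\eta \mathbf{1}_{m\ge 1}\|\la\psi\ra^{m-1}\mathring Q^{(k)}\|_{L^2}^2$, using smoothness of $q_e$.

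The main obstacle is to calibrate the cumulative absorption constants so that, after all the Young splittings (the weight cross term, the four RHS pairings, and the commutator integration by parts), the coercive quantity $\|\la\psi\ra^m \p_\psi \mathring Q^{(k)}\|_{L^2}^2$ survives on the left with an order-one coefficient while the auxiliary norm $\|\la\psi\ra^m \mathring Q^{(k)}\|_{L^2}^2$ accompanies only the tunable $\delta$. This calibration is clean provided the uniform lower bound $\cc q_e \ge c_0$ holds, which is exactly the sign condition asserted in Theorem \ref{thmbl}. Finally, using $\|\la\psi\ra^m \p_\psi Q^{(k)}\|_{L^2}^2 \le 2 \|\la\psi\ra^m \p_\psi \mathring Q^{(k)}\|_{L^2}^2 + 2\|\la\psi\ra^m \p_\psi L^{(k)}\|_{L^2}^2$ and absorbing $\|\la\psi\ra^m \p_\psi L^{(k)}\|_{L^2}^2 \lesssim \|b\|_{H^k_s}^2 \le \|b\|_{H^{k+1}_s}^2$, as well as $\|\la\psi\ra^m \mathring Q^{(k)}\|_{L^2}^2 \le 2\|\la\psi\ra^m Q^{(k)}\|_{L^2}^2 + C\|b\|_{H^k_s}^2$ and similarly for the lower-order $k'$ gradients, converts the $\mathring Q$-bound into the claimed inequality \eqref{bd:1}.
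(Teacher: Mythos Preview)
Your proposal is correct and follows essentially the same route as the paper: multiply the lifted equation \eqref{hghg:k} by $\mathring{Q}^{(k)}\la\psi\ra^{2m}$, use $s$-periodicity to kill the transport term, integrate by parts in $\psi$ for coercivity (using $\mathring{Q}^{(k)}(s,0)=0$), do one further $\psi$-integration by parts on the commutator $C_{\text{comm},k}$, apply Young throughout, and finally undo the lift via $Q^{(k)}=\mathring{Q}^{(k)}+e^{-\psi}b^{(k)}$. The only cosmetic difference is that the paper integrates the weight cross term $2m\cc q_e\int \p_\psi\mathring{Q}^{(k)}\,\mathring{Q}^{(k)}\la\psi\ra^{2m-1}$ by parts once more to obtain $\|\mathring{Q}^{(k)}\la\psi\ra^{m-1}\|_{L^2}^2$ directly, whereas you bound it by Young; both yield the same final estimate.
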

\begin{proof}
We multiply \eqref{hghg:k} by $\mathring{Q}^{(k)}\la \psi\ra^{2m}$ and integrate by parts to get the identity
\[
\bega 
\frac{\p_s}{2} \int_{\mathbb{R}_+} |\mathring{Q}^{(k)}|^2 \la\psi\ra^{2m}\rmd \psi &+ \cc q_e (s) \int_{\mathbb{R}_+} |\partial_{\psi}\mathring{Q}^{(k)}|^2 \la\psi\ra^{2m} \rmd \psi \\
&= \int (F^{(k)} + \p_{\psi}^2G^{(k)})\mathring{ Q}^{(k)} \la\psi\ra^{2m} \rmd \psi +  \int G^{(k)}_{\text{Lift}} \mathring{Q}^{(k)} \la\psi\ra^{2m} \rmd \psi \\
 &\qquad+ \frac{2m(2m-1)}{2} \cc q_e (s) \int |\mathring{Q}^{(k)}|^2 \la\psi\ra^{2m-2} \rmd \psi  \\
 &\qquad-  \int  \cc \bold{1}_{k \ge 1} \sum_{k' =0}^{k-1} \binom{k}{k'}\p_s^{k-k'} q_e  \p_\psi \mathring{Q}^{(k')} \p_\psi \mathring{Q}^{(k)} \la\psi\ra^{2m} \rmd \psi \\
 &\qquad -\int \w_0\bold{1}_{k\ge 1}\sum_{k'=0}^{k-1}\binom{k}{k'}\pt_s^{k-k'}q_e\pt_\psi \mathring{Q}^{(k')}\mathring{Q}^{(k)} 2m\la \psi\ra^{2m-1}.
 \enda 
\]
We now integrate in $s \in \mathbb{T}$, and the $\p_s$ term drops out due to periodicity. This implies 
\beq
\bega 
\lw\|\pt_\psi \mathring Q^{(k)}\la \psi\ra^m\rw\|_{L^2}^2&\lesssim \lw|\int\int  (F^{(k)} + \p_{\psi}^2 G^{(k)}) \mathring{Q}^{(k)} \la\psi\ra^{2m} \rmd \psi ds
\rw|\\
&\quad+\lw|\int\int G^{(k)}_{\text{Lift}} \mathring{Q}^{(k)} \la\psi\ra^{2m} \rmd \psi ds
\rw|\\
&\quad+\lw\|\mathring{Q}^{(k)}\la\psi\ra^{m-1}
\rw\|_{L^2}^2\\
&\quad+\lw|\int  \int  \cc \bold{1}_{k \ge 1} \sum_{k' =0}^{k-1} \binom{k}{k'}\p_s^{k-k'} q_e  \p_\psi \mathring{Q}^{(k')} \p_\psi \mathring{Q}^{(k)} \la\psi\ra^{2m} \rmd \psi ds\rw|\\
&\quad+\lw|\int \int \w_0\bold{1}_{k\ge 1}\sum_{k'=0}^{k-1}\binom{k}{k'}\pt_s^{k-k'}q_e\pt_\psi\mathring{ Q}^{(k')}\mathring{Q}^{(k)}\la \psi\ra^{2m-1}\rmd \psi \rmd s\rw|.
\enda
\eeq
This implies 
\[\bega 
\lw\|\pt_\psi \mathring Q^{(k)}\la \psi\ra^m\rw\|_{L^2}^2&\le \delta \lw\|\mathring{Q}^{(k)}\la\psi\ra^m
\rw\|_{L^2}^2+C_\delta \lw\|F^{(k)}\la \psi\ra^m
\rw\|_{L^2}^2+C_\delta\lw\|\pt_\psi^2 G^{(k)}\la\psi\ra^m
\rw\|_{L^2}^2\\
&\quad+C_\delta \|b\|_{H^{k+1}_s}+ C_0\bold{1}_{\{m \ge 1\}} \lw\|\mathring{Q}^{(k)}\la \psi\ra^{m-1}\rw\|_{L^2}^2\\
&\quad+C_\delta\bold{1}_{k \ge 1}\sum_{k'=0}^{k-1}\lw\|\pt_\psi \mathring{Q}^{(k')}\la \psi\ra^m
\rw\|_{L^2}^2+\delta\lw\|\pt_\psi\mathring{Q}^{(k)}\la\psi\ra^m,
\rw\|_{L^2}^2
\enda 
\]
where $\delta>0$ is small and $C_\delta\sim \delta^{-1}$.
The result follows immediately, using the fact that 
\[
\mathring{Q}^{(k)}=Q^{(k)}-e^{-\psi}b^{(k)}(s).
\]
  This concludes the proof of the lemma. 
\end{proof}

\begin{lem} Let $k \ge 0, m \ge 0$. The solution $Q^{(k)}$ to \eqref{real:2} satisfies the following estimate: 
\begin{align} 
\| \p_s Q^{(k)} \la\psi\ra^m \|_{L^2}^2 \lesssim &  \| F^{(k)} \la\psi\ra^m \|_{L^2}^2 + \| \p_{\psi}^2 G^{(k)} \la\psi\ra^m \|_{L^2}^2 + \| b \|_{H^{k+1}_s}^2 \\ \label{bd:2}
& +  \bold{1}_{m \ge 1} \| \p_\psi Q^{(k)} \la\psi\ra^{m-1} \|_{L^2}^2 + \bold{1}_{k \ge 1} \sum_{k' =0}^{k-1} \| \p_\psi^2 Q^{(k')} \la\psi\ra^m \|_{L^2}^2.
\end{align}
\end{lem}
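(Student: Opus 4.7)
The strategy is an $L^2$ energy estimate: I would test the lifted equation \eqref{hghg:k} for $\mathring{Q}^{(k)} = Q^{(k)} - e^{-\psi} b^{(k)}(s)$ against $\partial_s \mathring{Q}^{(k)} \langle\psi\rangle^{2m}$ and integrate over $\mathbb{T}_L \times \mathbb{R}_+$. The homogeneous boundary condition $\mathring{Q}^{(k)}(s,0)=0$ forces $\partial_s \mathring{Q}^{(k)}(s,0)=0$, which is essential for killing the boundary contributions in all subsequent integrations by parts. Since $Q^{(k)}$ and $\mathring{Q}^{(k)}$ differ by $e^{-\psi}b^{(k)}(s)$, exchanging one for the other in any weighted $L^2$ norm costs at most a $\|b\|_{H^{k+1}_s}^2$ contribution by the exponential decay of $e^{-\psi}$ against the polynomial weight $\langle\psi\rangle^{2m}$.

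The heart of the argument is the diffusion integral $\omega_0 \int\int q_e \partial_\psi^2 \mathring{Q}^{(k)} \partial_s \mathring{Q}^{(k)} \langle\psi\rangle^{2m}$. I would integrate by parts first in $\psi$, producing a principal piece $-\omega_0 \int\int q_e \partial_\psi \mathring{Q}^{(k)} \partial_s\partial_\psi \mathring{Q}^{(k)} \langle\psi\rangle^{2m}$ and a weight-differentiation remainder of order $m\omega_0 \int\int q_e \partial_\psi \mathring{Q}^{(k)} \partial_s \mathring{Q}^{(k)} \langle\psi\rangle^{2m-1}$ (the boundary term at $\psi=0$ vanishing for the reason noted above). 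The principal piece rewrites via $\partial_\psi \mathring{Q} \cdot \partial_s \partial_\psi \mathring{Q} = \tfrac12 \partial_s |\partial_\psi \mathring{Q}|^2$, and then a second integration by parts in $s$ exploiting periodicity collapses it to $\tfrac{\omega_0}{2}\int\int q_e'(s) |\partial_\psi \mathring{Q}^{(k)}|^2 \langle\psi\rangle^{2m}$. The weight-remainder is handled by Cauchy--Schwarz as $\delta \|\partial_s \mathring{Q}^{(k)} \langle\psi\rangle^m\|_{L^2}^2 + C_\delta \mathbf{1}_{m\geq 1}\|\partial_\psi \mathring{Q}^{(k)} \langle\psi\rangle^{m-1}\|_{L^2}^2$, with the first absorbed on the left; this supplies the target indicator $\mathbf{1}_{m\geq 1}\|\partial_\psi Q^{(k)} \langle\psi\rangle^{m-1}\|_{L^2}^2$. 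The sources $F^{(k)}$ and $\partial_\psi^2 G^{(k)}$ are paired against $\partial_s \mathring{Q}^{(k)} \langle\psi\rangle^{2m}$ by Cauchy--Schwarz to produce the stated source norms, the lift $G_{\text{Lift}}^{(k)}$ from \eqref{G-Lift} yields $\|b\|_{H^{k+1}_s}^2$ via the exponential decay of $e^{-\psi}$, and the commutator $C_{\text{comm},k}$ from \eqref{C-comm}, being a sum of terms of the form $\partial_s^{k-k'} q_e \cdot \partial_\psi^2 \mathring{Q}^{(k')}$ with $k' < k$, yields $\sum_{k'<k}\|\partial_\psi^2 Q^{(k')} \langle\psi\rangle^m\|_{L^2}^2$ since the $\partial_s^{k-k'} q_e$ factors are bounded.

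The main obstacle is the surviving piece $\tfrac{\omega_0}{2}\int q_e' |\partial_\psi \mathring{Q}^{(k)}|^2 \langle\psi\rangle^{2m}$, which is controlled by $\|\partial_\psi Q^{(k)} \langle\psi\rangle^m\|_{L^2}^2$ at the \emph{same} polynomial weight as the target left-hand side, yet this quantity is absent from the stated right-hand side and cannot be absorbed directly. To eliminate it, I would invoke the previous Lemma \eqref{bd:1} at level $m$, which controls $\|\partial_\psi Q^{(k)} \langle\psi\rangle^m\|_{L^2}^2$ in terms of the same sources, a small $\delta\|Q^{(k)} \langle\psi\rangle^m\|_{L^2}^2$, the lower-order commutators $\sum_{k'<k}\|\partial_\psi Q^{(k')} \langle\psi\rangle^m\|_{L^2}^2$, and the lower-weight $\|Q^{(k)} \langle\psi\rangle^{m-1}\|_{L^2}^2$, the last of which is reduced to $\|\partial_\psi Q^{(k)} \langle\psi\rangle^{m-1}\|_{L^2}^2$ by a weighted Hardy inequality using $Q^{(k)}(s,\infty)=0$. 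Combining these two estimates (together with the analogous test of the equation against $\mathring{Q}^{(k)}\langle\psi\rangle^{2m}$ itself to control $\|Q^{(k)}\langle\psi\rangle^m\|^2_{L^2}$) produces a closed linear system from which all intermediate same-weight $\|\partial_\psi\|$ and $\|Q^{(k)}\|$ quantities can be absorbed; this absorption is what the $\lesssim$ in the stated inequality tacitly encodes.
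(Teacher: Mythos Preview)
Your energy-estimate skeleton is right, but you missed a one-line trick that the paper uses to sidestep the ``main obstacle'' you identify. Instead of testing \eqref{hghg:k} against $\partial_s\mathring{Q}^{(k)}\langle\psi\rangle^{2m}$, the paper tests against $\tfrac{1}{q_e(s)}\,\partial_s\mathring{Q}^{(k)}\langle\psi\rangle^{2m}$. With that multiplier the diffusion term becomes $-\omega_0\int \partial_\psi^2\mathring{Q}^{(k)}\,\partial_s\mathring{Q}^{(k)}\langle\psi\rangle^{2m}\,\rmd\psi$ with \emph{no} $q_e$ prefactor; after integrating by parts in $\psi$ and then in $s$ (using periodicity) the principal piece is $\tfrac{\omega_0}{2}\,\partial_s\int|\partial_\psi\mathring{Q}^{(k)}|^2\langle\psi\rangle^{2m}\,\rmd\psi$, which integrates exactly to zero. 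There is no $q_e'$ residual at all, and the only surviving lower-order contribution is the weight remainder $2m\,\omega_0\int\partial_\psi\mathring{Q}^{(k)}\,\partial_s\mathring{Q}^{(k)}\langle\psi\rangle^{2m-1}$, which produces precisely the $\mathbf{1}_{m\ge1}\|\partial_\psi Q^{(k)}\langle\psi\rangle^{m-1}\|_{L^2}^2$ term on the stated right-hand side. The coercive term on the left is $\int\tfrac{1}{q_e}|\partial_s\mathring{Q}^{(k)}|^2\langle\psi\rangle^{2m}$, equivalent to $\|\partial_s\mathring{Q}^{(k)}\langle\psi\rangle^m\|_{L^2}^2$ since $q_e$ is bounded above and below.

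Your workaround---feeding the leftover $\|\partial_\psi Q^{(k)}\langle\psi\rangle^m\|_{L^2}^2$ back through \eqref{bd:1} and closing a ``linear system'' with zero-mode and Hardy estimates---would eventually prove Proposition~\ref{pro:pro:1}, but it does not prove \emph{this} lemma as stated: the absorption you describe brings in terms (e.g.\ $\delta\|Q^{(k)}\langle\psi\rangle^m\|_{L^2}^2$, $\sum_{k'<k}\|\partial_\psi Q^{(k')}\langle\psi\rangle^m\|_{L^2}^2$) that are not on the right-hand side of \eqref{bd:2}. The $\lesssim$ here is not tacitly encoding any such closure; the lemma is meant to hold with exactly the displayed right-hand side, and the $1/q_e$ multiplier delivers that directly.
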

\begin{proof} We multiply  \eqref{hghg:k}  by $\frac{1}{q_e (s)} \p_s \mathring{Q}^{(k)} \la\psi\ra^{2m}$ and integrate by parts to produce
\begin{align}
 &\int \frac{1}{q_e (s)} (F^{(k)} + \p_{\psi}^2 G^{(k)}) \p_s \mathring{Q}^{(k)} \la\psi\ra^{2m} \rmd \psi + \int \frac{1}{q_e (s)} G^{(k)}_{\text{Lift}} \p_s \mathring{Q}^{(k)} \la\psi\ra^{2m} \rmd \psi \\
 & \qquad + \int \frac{1}{q_e (s)}C_{\text{comm},k} \p_s \mathring{Q}^{(k)} \la\psi\ra^{2m} \rmd \psi \\
 = &  \int \frac{1}{q_e (s)} |\p_s \mathring{Q}^{(k)}|^2 \la\psi\ra^{2m} \rmd \psi + \cc \int \p_{\psi}^2 \mathring{Q}^{(k)} \p_s \mathring{Q}^{(k)} \la\psi\ra^{2m} \rmd \psi \\
 = &  \int \frac{1}{q_e (s)} |\p_s \mathring{Q}|^2 \la\psi\ra^{2m}\rmd \psi -  \cc \frac{\p_s}{2} \int |\p_\psi \mathring{Q}|^2 \la\psi\ra^{2m}\rmd \psi - \cc 2m \int \p_\psi \mathring{Q}^{(k)} \p_s \mathring{Q}^{(k)} \la\psi\ra^{2m-1} \rmd \psi .
\end{align}
Above, we have used the homogeneous boundary condition for $\mathring{Q}$ to integrate by parts. We now integrate over $s \in \mathbb{T}_L$ and use periodicity to eliminate the second term on the right-hand side above, which results in 
\[\bega
&\int\int\frac {1}{q_e(s)}\lw|\pt_s\mathring{Q}^{(k)}\rw|^2\la\psi\ra^{2m}\rmd \psi \rmd s\\
&\le C_\delta\lw\|F^{(k)}\la\psi\ra^m\rw\|_{L^2}^2+C_\delta\lw\|\pt_\psi^2 G^{(k)}\la\psi\ra^m\rw\|_{L^2}^2+C_\delta\lw\|G_{\text{Lift}}^{(k)}\la\psi\ra^m
\rw\|_{L^2}^2+C_\delta \lw\|C_{\text{comm},k} \la\psi\ra^m \rw\|_{L^2}^2\\
&\quad+\delta \lw\|\pt_s\mathring{Q}^{(k)}\la\psi\ra^m
\rw\|_{L^2}^2+C_\delta\lw\|\pt_\psi\mathring{Q}^{(k)}\la\psi\ra^{m-1}
\rw\|_{L^2}^2
\enda 
\]
Recalling \eqref{C-comm}, \eqref{G-Lift} and absorbing the last term on the right-hand side to the left, we get
\[\bega 
\lw\|\pt_s\mathring{Q}^{(k)}\la\psi\ra^m
\rw\|_{L^2}&\le C_\delta\lw(\lw\|F^{(k)}\la\psi\ra^m\rw\|_{L^2}^2+\lw\|\pt_\psi^2 G^{(k)}\la\psi\ra^m\rw\|_{L^2}^2+\|b\|_{H^{k+1}_s}^2\rw)\\
&\quad+C_\delta \bold{1}_{k \ge 1} \sum_{k' =0}^{k-1}  \lw\|\pt_\psi^2\mathring{Q}^{(k')}\la\psi\ra^m\rw\|_{L^2}^2
\enda
\]
We conclude the proof of the lemma, upon using the fact that $
\mathring{Q}^{(k)}=Q^{(k)}-e^{-\psi}b^{(k)}(s).$
 \end{proof}

We now need to estimate the zero mode of $Q^{(k)}$. Clearly, this is nontrivial only for $k = 0$ (when $k\ge 1$ there is no zero mode). 
\begin{lem} The zero mode, $Q^{(=0)}$, to the solution of \eqref{real:1}, satisfies the following bound: 
\begin{align} \label{bd:3}
\|  Q^{(= 0)} \la\psi\ra^m\|_{L^2_\psi}^2 \lesssim \  &\|  Q^{(\neq 0)} \la\psi\ra^m \|_{L^2_\psi}^2 + \| F \langle \psi \rangle^{m + 4} \|_{L^2}^2 + \| G  \la\psi\ra^m\|_{L^2}^2.
\end{align}
\end{lem}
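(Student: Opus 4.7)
The plan is to isolate the zero mode by averaging \eqref{real:1} in $s$, reducing the problem to a one-dimensional linear ODE in $\psi$. Let $\la\cdot\ra$ denote $s$-averaging over $\mathbb{T}_L$. Applying $\la\cdot\ra$ to \eqref{real:1} eliminates $\p_s Q$ by periodicity, and splitting $q_e = \la q_e\ra + (q_e - \la q_e\ra)$ in the diffusion term (using also that $\la Q^{(\neq 0)}\ra = 0$) yields
\[
-\cc \la q_e\ra\, \p_\psi^2 Q^{(=0)} = F^{(=0)} + \p_\psi^2 G^{(=0)} + \cc\, \big\la (q_e - \la q_e\ra)\, \p_\psi^2 Q^{(\neq 0)} \big\ra,
\]
posed on $[0,\infty)$ with data $Q^{(=0)}(0) = b^{(=0)}$ and $Q^{(=0)}(\infty) = 0$. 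The Feynman--Lagerstrom compatibility condition \eqref{bar:choice} is precisely the solvability constraint that forces $\p_\psi Q^{(=0)}(\infty) = 0$ as well, so the second-order ODE can be integrated from infinity.

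Next, I derive an explicit Duhamel-type representation. Using the decay of $Q^{(=0)}$ and $\p_\psi Q^{(=0)}$ at infinity, one has $Q^{(=0)}(\psi) = \int_\psi^\infty (y - \psi) \p_y^2 Q^{(=0)}(y)\, dy$. Substituting the ODE and integrating by parts twice on the $\p_\psi^2 G^{(=0)}$ and $\p_\psi^2 Q^{(\neq 0)}$ contributions (the boundary terms at $y = \infty$ vanish by the polynomial decay encoded in the $X_{k,m}$ norm, and those at $y = \psi$ by the factor $(y - \psi)$) produces
\[
Q^{(=0)}(\psi) = -\frac{1}{\cc \la q_e\ra}\Big[ \int_\psi^\infty (y - \psi) F^{(=0)}(y)\, dy + G^{(=0)}(\psi) + \cc\, \big\la (q_e - \la q_e\ra)\, Q^{(\neq 0)} \big\ra(\psi) \Big].
\]
The prefactor is harmless: $\cc$ is $O(1)$ by the bootstrap and $q_e$ is bounded away from zero by hypothesis.

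The final step is to bound each term in the $\la\psi\ra^m$-weighted $L^2_\psi$ norm. The $G^{(=0)}$ contribution is controlled directly by $\|G\la\psi\ra^m\|_{L^2}$, and Cauchy--Schwarz in $s$ (together with $q_e \in L^\infty$) bounds $\|\la\psi\ra^m \la (q_e - \la q_e\ra) Q^{(\neq 0)}\ra\|_{L^2_\psi}$ by $\|Q^{(\neq 0)}\la\psi\ra^m\|_{L^2}$. For the $F^{(=0)}$ integral, I apply Cauchy--Schwarz in $y$ against the weight $\la y\ra^{-(m+4)}$ to obtain the pointwise bound
\[
\Big| \int_\psi^\infty (y - \psi) F^{(=0)}(y)\, dy \Big| \lesssim \la\psi\ra^{-m - 3/2}\, \|F \la y\ra^{m+4}\|_{L^2_y},
\]
so that multiplying by $\la\psi\ra^m$ yields a function decaying like $\la\psi\ra^{-3/2}$, which is $L^2(d\psi)$.

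The main technical subtlety is rigorously justifying the vanishing of boundary terms at $y = \infty$ in the integrations by parts; this is assured by the polynomial decay of $G$, $\p_\psi G$, and $Q^{(\neq 0)}$ built into $X_{k,m}$ for $m$ sufficiently large (certainly $m = 50$ suffices). Otherwise the argument is a purely linear one-dimensional computation decoupled from the quasilinear/implicit-selection difficulties handled elsewhere in the iteration.
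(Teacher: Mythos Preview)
Your argument is correct and is essentially the same as the paper's proof: both integrate the equation in $s$, isolate $\la q_e\ra Q^{(=0)}$ on the left with the $q_e^{(\neq 0)} Q^{(\neq 0)}$ cross term moved to the right, integrate twice from infinity (your $(y-\psi)$ kernel is the double integral $\int_\psi^\infty\int_{\psi'}^\infty$ the paper writes), and then estimate the three resulting pieces by Cauchy--Schwarz with the same weight bookkeeping. The only differences are cosmetic ordering (you split $q_e Q$ before integrating in $\psi$, the paper after) and a harmless arithmetic slip in your pointwise decay exponent for the $F$ integral (it is actually $\la\psi\ra^{-m-5/2}$, not $\la\psi\ra^{-m-3/2}$, but either suffices).
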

\begin{proof}We integrate equation \eqref{real:1} to generate the identity for each $\psi \in \mathbb{R}_+$:
\begin{align}
\p_{\psi}^2 \int_0^L  q_e (s) Q(s, \psi) \rmd s = - \frac{1}{\cc}\int_0^L (F + \p_\psi^2 G) \rmd s ,
\end{align}
after which we integrate twice from $\infty$ to get 
\begin{align}
\int_0^L  q_e (s) Q(s, \psi) \rmd s = &- \frac{1}{\cc} \int_{\psi}^\infty \int_{\psi'}^\infty  \int_0^L(F + \p_\psi^2 G )  \rmd s \rmd \psi '' \psi' \\
= & - \frac{1}{\cc} \int_{\psi}^\infty \int_{\psi'}^\infty  \int_0^L F  \rmd s \rmd \psi '' \psi' - \frac{1}{\cc} \int_0^L G  \rmd s.
\end{align}
We now separate out the left-hand side 
\begin{align}
\int_0^L  q_e (s) Q(s, \psi) \rmd s &= \langle q_e  \rangle Q^{(= 0)}(\psi) + \int_0^L  (q_e (s) - \langle q_e  \rangle) Q(s, \psi) \rmd s \\
&=\la q_e \ra Q^{(=0)}(\psi)+\int_0^L \lw(q_e -\la q_e \ra
\rw)\lw(Q^{(=0)}(\psi)+Q^{(\neq 0)}(s,\psi)
\rw)\rmd s\\
&=\la q_e \ra Q^{(=0)}(\psi)+\int_0^L q_e ^{(\neq 0)}(s)Q^{(\neq 0)}(s,\psi)\rmd s
\end{align}
This implies 
\[\bega 
Q^{(=0)}(\psi)&=-\frac 1 {\w_0\la q_e \ra }\int_\psi^\infty \int_{\psi'}^\infty \int_0^L F \rmd s\rmd \psi''\rmd \psi'-\frac 1 {\w_0\la q_e \ra }\int_0^L G\rmd s\\
&\quad-\frac 1 {\la q_e \ra}\int_0^L  q_e ^{(\neq 0)}(s)Q^{(\neq 0)}(s,\psi)\rmd s.
\enda 
\]
We therefore obtain 
\begin{align*}
\|  Q^{(= 0)} \la\psi\ra^m\|_{L^2_\psi}^2 \lesssim & \int_{\mathbb{R}_+} \Big( \int_0^L  q_e ^{(\neq 0)}(s) Q^{(\neq 0)}(s, \psi) \rmd s \Big)^2 \la\psi\ra^{2m} \rmd \psi \\
&+ \int_{\mathbb{R}_+} \la\psi\ra^{2m} \Big(  \int_{\psi}^\infty \int_{\psi'}^\infty  \int_0^L F \Big)^2 \rmd \psi + \| G^{(=0)} \la\psi\ra^m\|_{L^2_\psi}^2 \\
 =:  &\  \mathcal{I}_1 + \mathcal{I}_2 + \mathcal{I}_3 .
\end{align*}
Clearly, $\mathcal{I}_3$ is majorized by the last term on the right-hand side of \eqref{bd:3}. We will estimate the first term above, which we call $\mathcal{I}_1$.
Using H\"{o}lder's inequality, we get 
\begin{align*}
\mathcal{I}_1 := &  \int_{\mathbb{R}_+} \la\psi\ra^{2m} \Big( \int_0^L q_e ^{(\neq 0)}(s)Q^{(\neq 0)}(s, \psi) \rmd s \Big)^2 \rmd \psi \\
\lesssim &\  \| Q^{( \neq 0)}\la \psi\ra^m \|_{L^2}^2 .
\end{align*}
To estimate $\mathcal{I}_2$, we need to pay weights as follows using Cauchy-Schwartz: 
\begin{align}
\Big|\int_{\psi'}^\infty \int_0^L F\Big| \lesssim \| F \langle \psi \rangle^{m + 4} \|_{L^2} \langle \psi' \rangle^{-m-\frac72},
\end{align}
which therefore implies that $|\mathcal{I}_2| \lesssim \| F \langle \psi \rangle^{m + 4} \|_{L^2}$. This completes the proof.
\end{proof}

\begin{lem}Let $k \ge 0, m \ge 0$. The solution $Q^{(k)}$ to \eqref{real:2} satisfies the following estimate: 
\begin{align} 
\| \p_\psi^2 Q^{(k)} \la\psi\ra^m \|_{L^2}^2 \lesssim &\  \| \p_s Q^{(k)} \la\psi\ra^m \|_{L^2}^2 + \| F^{(k)} \la\psi\ra^m \|_{L^2}^2 + \| \p_\psi^2 G^{(k)} \la\psi\ra^m \|_{L^2}^2 \\ \label{bd:4}
&+ \bold{1}_{k \ge 1} \sum_{k' = 0}^{k-1} \| \p_\psi^2 Q^{(k')} \la\psi\ra^m \|_{L^2}^2.
\end{align}
\end{lem}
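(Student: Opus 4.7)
The key observation is that \eqref{real:2} already gives $\p_\psi^2 Q^{(k)}$ algebraically in terms of $\p_s Q^{(k)}$, the sources, and lower-order commutator contributions. Solving \eqref{real:2} for the top-order term, we obtain
\begin{equation}
\cc\, q_e(s)\, \p_\psi^2 Q^{(k)} \;=\; \p_s Q^{(k)} \;-\; F^{(k)} \;-\; \p_\psi^2 G^{(k)} \;-\; A_{\text{comm},k},
\end{equation}
where $A_{\text{comm},k}$ is the commutator defined after \eqref{real:2}. Since $q_e$ is smooth and non-vanishing by the hypothesis of Theorem \ref{thmbl}, and since the bootstrap bound \eqref{mainbd:1} together with $1-\omega_0^2 = \ve\overline{\cc}$ ensures that $\cc = \cc_{n-1}$ stays in a fixed neighborhood of $1$ for $\ve$ small (in particular bounded away from zero), the prefactor $(\cc\, q_e(s))^{-1}$ is uniformly bounded in $s$.

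The plan is therefore to divide by $\cc q_e$, multiply by $\la\psi\ra^m$, and take $L^2(\mathbb{T}_L\times\mathbb{R}_+)$ norms, applying the triangle inequality term-by-term. Explicitly,
\begin{equation}
\|\p_\psi^2 Q^{(k)}\la\psi\ra^m\|_{L^2} \;\lesssim\; \|\p_s Q^{(k)}\la\psi\ra^m\|_{L^2} + \|F^{(k)}\la\psi\ra^m\|_{L^2} + \|\p_\psi^2 G^{(k)}\la\psi\ra^m\|_{L^2} + \|A_{\text{comm},k}\la\psi\ra^m\|_{L^2}.
\end{equation}
For the commutator, since $\p_s^{k-k'} q_e\in L^\infty$ for each $0\le k'\le k-1$, one has
\begin{equation}
\|A_{\text{comm},k}\la\psi\ra^m\|_{L^2} \;\lesssim\; \bold{1}_{k\ge 1}\sum_{k'=0}^{k-1}\|\p_\psi^2 Q^{(k')}\la\psi\ra^m\|_{L^2},
\end{equation}
which produces exactly the last sum on the right-hand side of \eqref{bd:4}. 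Squaring yields the stated estimate.

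This step is the most routine of the four $L^2$ estimates: it does not require any integration by parts, any boundary analysis, or the use of the Feynman--Lagerstrom compatibility condition, because the diffusion operator $\cc q_e \p_\psi^2$ is pointwise invertible given $\p_s Q^{(k)}$ and the sources. The only mild subtlety is verifying that the prefactor $(\cc q_e)^{-1}$ is harmless, which follows from the non-vanishing hypothesis on $q_e$ together with the bootstrap assumption keeping $\cc$ near $1$. Combining \eqref{bd:4} with \eqref{bd:1}, \eqref{bd:2} and the zero-mode bound \eqref{bd:3}, one then closes the $X_{k,m}$ estimate stated in Proposition \ref{pro:pro:1} by induction on $k$ and on $m$.
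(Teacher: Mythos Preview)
Your proof is correct and follows exactly the same approach as the paper, which states simply: ``We simply rearrange equation \eqref{real:2} and apply $L^2$ norm to both sides.'' Your version spells out the details of that rearrangement and the commutator bound, which is fine.
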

\begin{proof}We simply rearrange equation \eqref{real:2} and apply $L^2$ norm to both sides. 
\end{proof}
\textit{
Proof of Proposition \ref{pro:pro:1}}. Consolidating the bounds \eqref{bd:1}, \eqref{bd:2}, \eqref{bd:3}, \eqref{bd:4}, we proved \eqref{sam:ad:1}. 

\subsection{$Q_n$ estimates}

\begin{lem} \label{hyu:1}Assume \eqref{mainbd:1} is valid up to  index $n$ and \eqref{mainbd:2} is valid up to  index $n-1$. Then 
\begin{align}
\| Q_n \|_{X_{2, 50}} \le \eps^{0.99}.
\end{align}
\end{lem}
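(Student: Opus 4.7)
The plan is to reduce the estimate directly to Proposition \ref{pro:pro:1}. The iteration equation \eqref{approx} for $Q_n$ takes the abstract form \eqref{real:1} with source, sink, and boundary
\begin{align}
F := f(Q_{n-1}, s; \cc_{n-1}) = \Big(1 - \sqrt{1 - \tfrac{Q_{n-1}}{\cc_{n-1}^2 q_e^2 + Q_{n-1}}}\Big)\p_s Q_{n-1}, \qquad G \equiv 0,
\end{align}
and $b_n(s) = \ve \overline{\cc}_n q_e^2(s) + 2\ve g(s) q_e(s) + \ve^2 g^2(s)$. The crucial point is that the choice \eqref{bar:choice} of $\overline{\cc}_n$ is designed precisely to enforce the Feynman--Lagerstrom zero-mode compatibility required by Proposition \ref{pro:pro:1}. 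Hence the abstract bound \eqref{sam:ad:1} applies and yields
\begin{align}
\| Q_n \|_{X_{2,50}} \lesssim \sum_{k'=0}^{2} \| \p_s^{k'} F \, \la \psi \ra^{54} \|_{L^2} + \|b_n\|_{H^{3}_s}.
\end{align}

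For the boundary contribution, I would use the decomposition $\overline{\cc}_n = \overline{\omega}_{0\ast} + \overline{\cc}_{{\rm Err},n}$ from \eqref{aug:1:1}--\eqref{aug:1:2}. The piece $\overline{\omega}_{0\ast}$ is bounded purely in terms of the data $(q_e, g)$, while $|\overline{\cc}_{{\rm Err},n}| \le \ve^{0.97}$ by the bootstrap \eqref{mainbd:1} valid up to index $n$. Since $q_e, g$ are smooth, this readily gives $\|b_n\|_{H^3_s} \lesssim \ve$, with the constant depending only on $(q_e, g)$. The essential structural observation here is that $\overline{\cc}_n$ enters $b_n$ multiplied by $\ve$, so the boundary datum is uniformly $O(\ve)$ despite $\overline{\cc}_n$ itself being only $O(1)$.

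Next I would handle the nonlinear source. The bootstrap \eqref{mainbd:2} at index $n-1$ combined with the Sobolev embedding $\|\cdot\|_{L^\infty} \lesssim \|\cdot\|_{X_{1,0}}$ yields $\|Q_{n-1}\|_{L^\infty} \lesssim \ve^{0.99}$, so for $\ve$ small, $|Q_{n-1}|/(\cc_{n-1}^2 q_e^2) \ll 1$ uniformly and the function $1 - \sqrt{1 - x}$ is smooth in a neighborhood of $0$ with $|1 - \sqrt{1-x}| \lesssim |x|$. The leading-order estimate is then
\begin{align}
\| F \la \psi\ra^{54} \|_{L^2} \lesssim \Big\| \tfrac{Q_{n-1}}{\cc_{n-1}^2 q_e^2 + Q_{n-1}} \Big\|_{L^\infty} \|\p_s Q_{n-1}\, \la\psi\ra^{54}\|_{L^2} \lesssim \|Q_{n-1}\|_{X_{2,50}}^2 \le \ve^{1.98},
\end{align}
exactly as in the computation already carried out for $\overline{\cc}_{{\rm Err},n}$. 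For $\p_s F$ and $\p_s^2 F$ I would apply the product and chain rules: each derivative either lands on the smooth scalar factor $h(Q_{n-1}, s) := 1 - \sqrt{1 - Q_{n-1}/(\cc_{n-1}^2 q_e^2 + Q_{n-1})}$ (whose $\p_s$ derivatives, by the chain rule, produce expressions bounded pointwise by universal smooth functions of $Q_{n-1}$ times $|\p_s Q_{n-1}|$ or $|\p_s q_e|$) or directly on $\p_s Q_{n-1}$, producing terms up to $\p_s^3 Q_{n-1}$, which are precisely captured in the norm $X_{2,50}$. Placing the weight $\la\psi\ra^{54}$ on a factor carrying at least one $\p_s$-derivative of $Q_{n-1}$ in $L^2$, and estimating the remaining low-derivative factor in $L^\infty$ via $X_{1,0}$, every term becomes a product of two quantities each controlled by $\|Q_{n-1}\|_{X_{2,50}}$, hence bounded by $\ve^{1.98}$.

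Combining, I obtain $\|Q_n\|_{X_{2,50}} \le C(\ve + \ve^{1.98}) \le C\ve$ with $C$ depending only on $(q_e, g)$. Since $\ve \le \ve^{0.99}$ with margin $\ve^{0.01}$, choosing $\ve$ small enough (depending only on $C$) closes the bootstrap as $\|Q_n\|_{X_{2,50}} \le \ve^{0.99}$. The main point of technical care, rather than any genuine analytic obstacle, is the bookkeeping in the $\p_s^2 F$ estimate: one must verify that every term in the expansion of $\p_s^2 F$ always contains a factor carrying at most one $\p_s$-derivative of $Q_{n-1}$, which can be placed in $L^\infty$ via Sobolev embedding, so that the bilinear smallness $\ve^{0.99} \cdot \ve^{0.99}$ is preserved uniformly across all terms. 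This is where the choice of norm $X_{2,50}$ (controlling $\p_s^3$ in weighted $L^2$ and $\p_s^1 \p_\psi$ via the embedding) is tailored to exactly absorb the structure of the nonlinearity.
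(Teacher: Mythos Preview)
Your overall strategy matches the paper's exactly: apply Proposition~\ref{pro:pro:1} with $G=0$, estimate $\|b_n\|_{H^3_s}\lesssim\ve$, and show the source terms $\|\p_s^{k'}F\,\la\psi\ra^{54}\|_{L^2}$ are quadratically small. The structure is right.

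There is, however, a genuine bookkeeping error in how you distribute the weight $\la\psi\ra^{54}$. You write
\[
\|F\,\la\psi\ra^{54}\|_{L^2}\lesssim\Big\|\tfrac{Q_{n-1}}{\cc_{n-1}^2q_e^2+Q_{n-1}}\Big\|_{L^\infty}\|\p_sQ_{n-1}\,\la\psi\ra^{54}\|_{L^2},
\]
but $\|\p_sQ_{n-1}\,\la\psi\ra^{54}\|_{L^2}$ is \emph{not} controlled by $\|Q_{n-1}\|_{X_{2,50}}$, since the norm only carries weight up to $\la\psi\ra^{50}$. Likewise, your general prescription of ``placing the weight $\la\psi\ra^{54}$ on a factor \dots\ in $L^2$, and estimating the remaining low-derivative factor in $L^\infty$ via $X_{1,0}$'' fails for the same reason. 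The analogy to the $\overline{\cc}_{{\rm Err},n}$ computation is misleading: there the required weight was only $\la\psi\ra^4$.

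The fix, which the paper carries out, is to \emph{split} the weight: write $\la\psi\ra^{54}=\la\psi\ra^4\cdot\la\psi\ra^{50}$ and place $\la\psi\ra^4$ on the $L^\infty$ factor. With $U_{n-1}:=Q_{n-1}/(\cc_{n-1}^2q_e^2+Q_{n-1})$ one has $\|U_{n-1}\la\psi\ra^4\|_{L^\infty}\lesssim\|Q_{n-1}\la\psi\ra^4\|_{L^\infty}\lesssim\|Q_{n-1}\|_{X_{1,4}}$, and then $\la\psi\ra^{50}$ goes on the $L^2$ factor, which is controlled by $X_{2,50}$. The same splitting is needed for $\p_sF$ and $\p_s^2F$, where the paper puts $\la\psi\ra^4$ (or $\la\psi\ra^2$) on the weighted-$L^\infty$ factor via $X_{1,4}$ or $X_{2,4}$. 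Once you make this correction, the rest of your argument goes through.
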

\begin{proof} For this bound, motivated by equation \eqref{approx}, we set 
\begin{align}
F &:= \left(1-  \tfrac{\cc_{n-1}  q_e }{\sqrt{\cc_{n-1} ^2 q_e ^2+ Q_{n-1} }}\right)\partial_s Q_{n-1}\\
G &:= 0 \\
b &:=  \ve \overline{\cc}_n q_e ^2(s) + 2\ve g(s)q_e (s) + \ve^2 g^2(s)
\end{align} 

According to \eqref{sam:ad:1}, we fix $k = 2, m = 50$, which results in 
\begin{align} \label{jkl:1}
\| Q_n \|_{X_{2,50}} \lesssim & \sum_{k' = 0}^{2} \| \p_s^{k'}  F \langle \psi \rangle^{54} \|_{L^2}  +\| b \|_{H^{3}_s}
\end{align}
We therefore estimate the two quantities appearing on the right-hand side above. To make notation simpler, we define
\begin{align} \label{def:big:U}
U_{n-1} := \frac{Q_{n-1}}{\cc_{n-1}^2 q_e ^2 + Q_{n-1}},
\end{align}  
then 
\[
F=\lw(1-\sqrt{1-U_{n-1}}\rw)\pt_s Q_{n-1}
\]
By a direct calculation, we have the following identities
\beq\label{UF-calc}
\bega 
\p_s U_{n-1} =  & \frac{\p_s Q_{n-1}}{\cc_{n-1}^2 q_e ^2 + Q_{n-1}} -  \frac{Q_{n-1}}{(\cc_{n-1}^2 q_e ^2 + Q_{n-1})^2} (\cc_{n-1}^2 \p_s q_e ^2 + \p_s Q_{n-1}), \\
\p_s^2 U_{n-1} =  & \frac{\p_s^2 Q_{n-1}}{\cc_{n-1}^2 q_e ^2 + Q_{n-1}} - 2\frac{\p_s Q_{n-1}}{(\cc_{n-1}^2 q_e ^2 + Q_{n-1})^2} (\cc_{n-1}^2 \p_s q_e ^2 + \p_s Q_{n-1}) \\
& +   2 \frac{Q_{n-1}}{(\cc_{n-1}^2 q_e ^2 + Q_{n-1})^3} (\cc_{n-1}^2 \p_s q_e ^2 + \p_s Q_{n-1})^2  \\
& -  \frac{Q_{n-1}}{(\cc_{n-1}^2 q_e ^2 + Q_{n-1})^2} (\cc_{n-1}^2 \p_s^2 q_e ^2 + \p_s^2 Q_{n-1}).\\
\enda\eeq
First we estimate $\|F\la\psi\ra^{54}\|_{L^2}$. We have 
\begin{align}
\|F \langle \psi \rangle^{54} \|_{L^2} &\lesssim   \| U_{n-1} \langle \psi \rangle^4 \|_{L^\infty} \| \p_s Q_{n-1} \langle \psi \rangle^{50} \|_{L^2} \\
&\lesssim \frac{1}{1 - \| Q_{n-1} \|_{L^\infty}}  \| Q_{n-1} \langle \psi \rangle^4 \|_{L^\infty} \| \p_s Q_{n-1} \langle \psi \rangle^{50} \|_{L^2} \\
&\lesssim  \frac{1}{1 - \ve^{0.99}}\| Q_{n-1} \langle \psi \rangle^4 \|_{X_{1,4}}\| \p_s Q_{n-1} \langle \psi \rangle^{50} \|_{L^2} \\
&\lesssim  \| Q_{n-1} \|_{X_{1,4}} \| Q_{n-1} \|_{X_{0,50}} \\
&\lesssim \  \ve^{0.99} \ve^{0.99}.
\end{align} 
We now show that $\lw\|\p_s F\la\psi\ra^{54}\rw\|_{L^2}\lesssim \eps^{0.99}\eps^{0.99}$. We have 
\[\bega 
\p_s F = & (1 - \sqrt{1-U_{n-1}}) \p_s^2 Q_{n-1} + \frac12 (1 - U_{n-1})^{-\frac12} \p_s U_{n-1} \p_s Q_{n-1} \\
=: & A_{1} + A_2 \\
\enda 
\]
We first bound $A_1$.
We have 
\begin{align}
\| A_1 \langle \psi \rangle^{54} \|_{L^2} \lesssim  & \| U_{n-1} \langle \psi \rangle^4 \|_{L^\infty} \| \p_s^2 Q_{n-1} \langle \psi \rangle^{50} \|_{L^2} \\
\lesssim & \frac{1}{1 - \| Q_{n-1} \|_{L^\infty}}  \| Q_{n-1} \langle \psi \rangle^4 \|_{L^\infty} \| \p_s^2 Q_{n-1} \langle \psi \rangle^{50} \|_{L^2} \\
\lesssim & \frac{1}{1 - \ve^{0.99}}\| Q_{n-1}  \|_{X_{1,4}}\| \p_s^2 Q_{n-1} \langle \psi \rangle^{50} \|_{L^2} \\
\lesssim & \| Q_{n-1} \|_{X_{1,4}} \| Q_{n-1} \|_{X_{1,50}} \\
\lesssim &\  \ve^{0.99} \ve^{0.99}.
\end{align} 
We now estimate $A_2$. We have 
\begin{align}
\| A_2 \langle \psi \rangle^{54} \|_{L^2} \lesssim  & \| \p_s U_{n-1} \langle \psi \rangle^{4} \|_{L^\infty} \| \p_s Q_{n-1} \langle \psi \rangle^{50} \|_{L^2} \\
\lesssim & \| Q_{n-1} \|_{X_{2,4}} \| Q_{n-1} \|_{X_{0,50}} \\
\lesssim &\  \ve^{0.99} \ve^{0.99}.
\end{align}
We now show that 
\[
\lw\|\pt_s^2 F\la\psi\ra^{54}
\rw\|_{L^2}\lesssim \eps^{0.99}\eps^{0.99}.
\]
By a direct calculation, we get \[\bega
\p_s^2 F &=  (1 - \sqrt{1-U_{n-1}}) \p_s^3 Q_{n-1} + (1 - U_{n-1})^{-\frac12} \p_s U_{n-1} \p_s^2 Q_{n-1} \\
&+ (1 - U_{n-1})^{-\frac12} \p_s^2 U_{n-1} \p_s Q_{n-1} + (1 - U_{n-1})^{-\frac32}|\p_s U_{n-1} |^2 \p_s^2 Q_{n-1} \\
&= : B_1 + B_2 + B_3 + B_4. 
\enda\]
We first establish the following bounds on the auxiliary quantities $U_{n-1}$. We have 
\begin{align}
\| \p_s U_{n-1} \langle \psi \rangle^{m} \|_{L^\infty}& \lesssim  \| \p_s Q_{n-1} \langle \psi \rangle^{m} \|_{L^\infty} + \|  Q_{n-1} \langle \psi \rangle^{m} \|_{L^\infty} (1 + \| \p_s Q_{n-1} \|_{L^\infty}) \\
&\lesssim  \|  Q_{n-1}\|_{X_{2,m}} + \|  Q_{n-1} \|_{X_{2,m}} (1 + \| Q_{n-1} \|_{X_{2,0}}) \\
&\lesssim  \|  Q_{n-1}\|_{X_{2,m}}.
\end{align}
Similarly, we have
\begin{align}
\| \p_s^2 U_{n-1} \langle \psi \rangle^{m} \|_{L^2}& \lesssim \| \p_s^2 Q_{n-1} \langle \psi \rangle^{m} \|_{L^2} + \| \p_s Q_{n-1} \langle \psi \rangle^{m} \|_{L^2} (1 + \| \p_s Q_{n-1} \|_{L^\infty}) \\
& \quad+ \| Q_{n-1} \langle \psi \rangle^m \|_{L^2} (1 + \| \p_s Q_{n-1} \|_{L^\infty})^2 \\
& \quad+ \| Q_{n-1} \langle \psi \rangle^m \|_{L^\infty} (1 + \|\p_s^2 Q_{n-1} \|_{L^2}) \\
&\lesssim  \| Q_{n-1} \|_{X_{2,m}}
\end{align}
We can now estimate of $\lw\|\p_s^2 F\la\psi\ra^{54}\rw\|_{L^2}$. We first bound $B_1$. We have  
\begin{align}
\| B_1 \langle \psi \rangle^{54} \|_{L^2} &\lesssim  \| U_{n-1} \langle \psi \rangle^4 \|_{L^\infty} \| \p_s^3 Q_{n-1} \langle \psi \rangle^{50} \|_{L^2} \\
&\lesssim  \frac{1}{1 - \| Q_{n-1} \|_{L^\infty}}  \| Q_{n-1} \langle \psi \rangle^4 \|_{L^\infty} \| \p_s^3 Q_{n-1} \langle \psi \rangle^{50} \|_{L^2} \\
&\lesssim  \frac{1}{1 - \ve^{0.99}} \| Q_{n-1} \|_{X_{1,4}} \| Q_{n-1} \|_{X_{2,50}} \\
&\lesssim  \| Q_{n-1} \|_{X_{1,4}} \| Q_{n-1} \|_{X_{2,50}} \\
&\lesssim \ \ve^{0.99} \ve^{0.99}.
\end{align} 
We next move to $B_2$, for which we have 
\begin{align}
\| B_2 \langle \psi \rangle^{54} \|_{L^2} \lesssim  &\| \p_s U_{n-1} \langle \psi \rangle^{4} \|_{L^\infty} \| \p_s^2 Q_{n-1} \langle \psi \rangle^{50} \|_{L^2} \\
\lesssim & \| Q_{n-1} \|_{X_{2,4}} \| Q_{n-1} \|_{X_{1,50}} \\
\lesssim &\  \ve^{0.99} \ve^{0.99}.
\end{align}
As for $B_3$, we have 
\begin{align}
\| B_3 \langle \psi \rangle^{54} \|_{L^2} \lesssim  &\| \p_s^2 U_{n-1} \langle \psi \rangle^{50} \|_{L^2} \| \p_s Q_{n-1} \langle \psi \rangle^{4} \|_{L^\infty} \\
 \lesssim  &\| \p_s^2 U_{n-1} \langle \psi \rangle^{50} \|_{L^2} \| \p_s Q_{n-1} \langle \psi \rangle^{4} \|_{H^1_s H^1_\psi} \\
\lesssim & \| Q_{n-1} \|_{X_{2,50}} \| Q_{n-1} \|_{X_{2,4}} \\
\lesssim & \ \ve^{0.99} \ve^{0.99}.
\end{align}
We finally conclude with an estimate on $B_4$, for which we have 
\begin{align}
\| B_4 \langle \psi \rangle^{54} \|_{L^2} \lesssim  & \| \p_s U_{n-1} \langle \psi \rangle^2 \|_{L^\infty}^2 \| \p_s^2 Q_{n-1} \langle \psi \rangle^{50} \|_{L^2} \\
\lesssim & \| Q_{n-1} \|_{X_{2,2}}^2 \| Q_{n-1} \|_{X_{1,50}} \\
\lesssim &\ \ve^{0.99} \ve^{0.99} \ve^{0.99}.
\end{align}

To conclude the proof of lemma, we need to estimate the $H^3_s$ norm of $b$, 
\begin{align}
\| b \|_{H^3_s} \lesssim \ve |\cc_{n}| \| q_e  \|_{H^3_s} + \ve \| g \|_{H^3_s} \| q_e  \|_{H^3_s} + \ve^2 \| g \|_{H^3_s}^2 \lesssim \ve.
\end{align}
Therefore, according to \eqref{jkl:1}, the lemma is proven. 
\end{proof}

\subsection{$\Delta Q_n$ estimates}

Our main objective in this section is to close the final bootstrap bound, \eqref{mainbd:4}. We begin with a lemma which allows us to control our auxiliary quantity, $U_{n-1}$, introduced in \eqref{def:big:U}.  
\begin{lem} Let $0 \le m \le 50$. Assume \eqref{mainbd:1} - \eqref{mainbd:4} are valid until  the index $n - 1$. Assume \eqref{mainbd:1} - \eqref{mainbd:3} are valid until  the index $n$. The quantities $U_{n-1}, U_{n-2}$ satisfy:
\begin{align} \label{des:1}
\sum_{j = 0}^1 \| ( \p_s^j U_{n-1} - \p_s^j U_{n-2}) \langle \psi \rangle^m \|_{L^\infty} \lesssim &\| \Delta Q_{n-1} \|_{X_{2,m}} + \eps^{1.99} |\Delta \overline{\cc}_{n-1}| \\ \label{des:2}
\sum_{j = 0}^2 \| ( \p_s^j U_{n-1} - \p_s^j U_{n-2}) \langle \psi \rangle^m \|_{L^2} \lesssim &\| \Delta Q_{n-1} \|_{X_{2,m}} + \eps^{1.99} |\Delta \overline{\cc}_{n-1}|
\end{align}
\end{lem}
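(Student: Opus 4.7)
The plan is to reduce the lemma to an algebraic identity by writing the difference $U_{n-1}-U_{n-2}$ over a common denominator, and then estimate each piece using the bootstrap hypotheses together with the Sobolev embedding $\|f\|_{L^\infty}\lesssim\|f\|_{X_{1,0}}$ already recorded in the paper.

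First I would compute
\begin{align*}
U_{n-1}-U_{n-2}
&=\frac{q_e^2\bigl(\w_{n-1}^2(Q_{n-1}-Q_{n-2})+Q_{n-1}(\w_{n-2}^2-\w_{n-1}^2)\bigr)}{(\w_{n-1}^2q_e^2+Q_{n-1})(\w_{n-2}^2q_e^2+Q_{n-2})}.
\end{align*}
Using $1-\w_n^2=\ve\overline\w_n$ we have $\w_{n-2}^2-\w_{n-1}^2=-\ve\Delta\overline\w_{n-1}$, a term of size $\ve|\Delta\overline\w_{n-1}|$. The bootstrap bounds together with Sobolev embedding give $\|Q_{n-j}\|_{L^\infty}\lesssim\|Q_{n-j}\|_{X_{1,0}}\lesssim\ve^{0.99}$ for $j\in\{1,2\}$, so the nonvanishing of $q_e$ yields the uniform lower bound
\begin{equation*}
\w_{n-j}^2q_e^2(s)+Q_{n-j}(s,\psi)\gtrsim c_0>0\qquad(j\in\{1,2\}).
\end{equation*}
This immediately gives the $j=0$ cases of both \eqref{des:1} and \eqref{des:2}, since the two numerator contributions are bounded by $\|\Delta Q_{n-1}\la\psi\ra^m\|_{L^\infty}$ (respectively $L^2$) and $\|Q_{n-1}\la\psi\ra^m\|_{L^\infty}\cdot\ve|\Delta\overline\w_{n-1}|\lesssim\ve^{1.99}|\Delta\overline\w_{n-1}|$.

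Next I would differentiate the common-denominator identity in $s$ once (and twice, for the $L^2$ bound) using the quotient and product rules. The resulting terms fall into two schematic classes: (i) terms in which the $s$-derivatives act on a factor of $\Delta Q_{n-1}$, which is always placed in the $L^2$ slot and bounded by $\|\Delta Q_{n-1}\|_{X_{2,m}}$; and (ii) terms in which the $s$-derivatives act on a factor of $Q_{n-1}$, $Q_{n-2}$, or a denominator (producing $\partial_s Q_{n-j}$ divided by the already-controlled positive denominator). In case (ii) the surviving multiplicative coefficient is always either a bounded smooth function of $q_e$ or an $X_{2,m}$-norm of $Q_{n-j}$, so by the bootstrap every term of this type picks up an extra factor of at least $\ve^{0.99}$ beyond the baseline $\ve|\Delta\overline\w_{n-1}|$, yielding the asserted $\ve^{1.99}|\Delta\overline\w_{n-1}|$.

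For the $L^\infty$ bound with $j=1$, I would place first derivatives of $\Delta Q_{n-1}$ in $L^\infty$ via Sobolev, which costs an extra derivative and thereby uses $\|\Delta Q_{n-1}\|_{X_{2,m}}$ (rather than $X_{1,m}$); this is exactly the reason the $X_{2,\cdot}$ norm appears on the right of \eqref{des:1}. For the $L^2$ bound with $j=2$, I would carry out the standard high--low split: the top derivative of $\Delta Q_{n-1}$ (or of the coefficient $Q_{n-j}$) lies in $L^2$, while lower-order factors are placed in $L^\infty$ using Sobolev. The only mildly delicate step is checking that derivatives landing on the denominator are harmless; this follows because $\partial_s(\w_{n-j}^2q_e^2+Q_{n-j})^{-1}$ produces $(\w_{n-j}^2 \partial_s q_e^2+\partial_s Q_{n-j})$ divided by the square of the positive denominator, and $\|\partial_s Q_{n-j}\la\psi\ra^m\|_{L^\infty}\lesssim\|Q_{n-j}\|_{X_{2,m}}\lesssim\ve^{0.99}$.

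The argument is essentially algebraic bookkeeping with no genuine analytic obstacle, and the main thing to keep track of is simply the power of $\ve$ accumulated on each term: each factor of $Q_{n-j}$ (at any level of differentiation consistent with $X_{2,50}$) contributes $\ve^{0.99}$, while the combinatorial factor $\w_{n-2}^2-\w_{n-1}^2$ contributes $\ve|\Delta\overline\w_{n-1}|$, giving the stated right-hand side $\|\Delta Q_{n-1}\|_{X_{2,m}}+\ve^{1.99}|\Delta\overline\w_{n-1}|$ in both \eqref{des:1} and \eqref{des:2}.
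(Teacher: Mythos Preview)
Your proposal is correct and follows essentially the same route as the paper. You write $U_{n-1}-U_{n-2}$ as a linear combination of $\Delta Q_{n-1}$ and $\ve\,\Delta\overline{\cc}_{n-1}$ with coefficients built from $Q_{n-j}$ and the positive denominators $D_{n-j}=\cc_{n-j}^2q_e^2+Q_{n-j}$, then differentiate and distribute the weights via the high--low split; this is exactly the paper's scheme, which merely packages the same computation by naming the coefficients $\alpha,\gamma$ and proving the bounds \eqref{atacama:1}--\eqref{at:4} on them separately (including the decomposition $\p_s^2\alpha=\alpha_A+\alpha_B$ that corresponds to your ``top derivative in $L^2$, lower order in $L^\infty$'' split). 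The only cosmetic difference is that the paper attaches the factor $Q_{n-2}$ to the $\Delta\overline{\cc}_{n-1}$ term whereas you attach $Q_{n-1}$, which makes no difference for the estimates.
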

\begin{proof} Recalling \eqref{def:big:U}, 
we have 
\begin{align}
U_{n-1} - U_{n-2} = & \Big(\frac{Q_{n-1}}{\cc_{n-1}^2 q_e ^2 + Q_{n-1}} - \frac{Q_{n-2}}{\cc_{n-1}^2 q_e ^2 + Q_{n-1}}\Big)\\
&\qquad  + Q_{n-2} \Big( \frac{1}{\cc_{n-1}^2 q_e ^2 + Q_{n-1}} - \frac{1}{\cc_{n-2}^2 q_e ^2 + Q_{n-2}}  \Big) \\
= & \  \frac{\Delta Q_{n-1}}{\cc_{n-1}^2 q_e ^2 + Q_{n-1}} + \frac{Q_{n-2}}{(\cc_{n-1}^2 q_e ^2 + Q_{n-1})(\cc_{n-2}^2 q_e ^2 + Q_{n-2})} \Delta Q_{n-1} \\
&\quad + \frac{Q_{n-2}}{(\cc_{n-1}^2 q_e ^2 + Q_{n-1})(\cc_{n-2}^2 q_e ^2 + Q_{n-2})} q_e ^2 ( \cc_{n-2}^2 - \cc_{n-1}^2) \\ \label{calama:1}
= &\  \alpha \Delta Q_{n-1} + \gamma  \ve |\Delta \overline{\cc}_{n-1}|,
\end{align}
where the coefficients are defined by 
\begin{align}
\alpha := &\frac{1}{\cc_{n-1}^2 q_e ^2 + Q_{n-1}} + \frac{Q_{n-2}}{(\cc_{n-1}^2 q_e ^2 + Q_{n-1})(\cc_{n-2}^2 q_e ^2 + Q_{n-2})} \\
\gamma := & \frac{Q_{n-2}}{(\cc_{n-1}^2 q_e ^2 + Q_{n-1})(\cc_{n-2}^2 q_e ^2 + Q_{n-2})} q_e ^2.
\end{align}
According to our bootstraps, we claim the following bounds. There exists a decomposition of $\p_s^2 \alpha = \alpha_A + \alpha_B$ such that
\begin{align} \label{atacama:1}
\| \alpha \|_{L^\infty} + \| \p_s \alpha \|_{L^\infty} &\lesssim  1\\ \label{atacama:2}
\| \alpha_A \|_{L^\infty} + \| \alpha_B \|_{L^2}& \lesssim  1 \\ \label{atacama:3}
\| \gamma \langle \psi \rangle^m \|_{L^\infty} + \| \p_s \gamma \langle \psi \rangle^m \|_{L^\infty} &\lesssim \ve^{0.99} \\ \label{atacama:4}
 \| \p_s^2 \gamma \langle \psi \rangle^m \|_{L^2} &\lesssim  \ve^{0.99} \\ \label{at:4}
 \| \gamma \langle \psi \rangle^m \|_{L^2} + \| \p_s \gamma \langle \psi \rangle^m \|_{L^2}& \lesssim  \ve^{0.99}.
\end{align}
We will prove these bounds as follows. First, we define 
\begin{align}
\alpha^{(1)} := &\frac{1}{\cc_{n-1}^2 q_e ^2 + Q_{n-1}} = \frac{1}{D_{n-1}},\\
\alpha^{(2)} := &  \frac{Q_{n-2}}{(\cc_{n-1}^2 q_e ^2 + Q_{n-1})(\cc_{n-2}^2 q_e ^2 + Q_{n-2})} = \frac{Q_{n-2}}{D_{n-1} D_{n-2}}, \\
D_{n} := & \cc_{n}^2 q_e ^2 + Q_{n}. 
\end{align}
after which the following identities are valid: 
\begin{align} \label{atacama:01}
\alpha = \alpha^{(1)} + \alpha^{(2)}, \qquad \gamma = q_e ^2 \alpha^{(2)}. 
\end{align}
We will henceforth prove the following bounds. We claim there exists a decomposition of $\p_s^2 \alpha^{(1)} = \alpha^{(1)}_A + \alpha^{(1)}_B$, where 
\begin{align} \label{atacama:5}
\| \alpha^{(1)} \|_{L^\infty} + \| \p_s \alpha^{(1)} \|_{L^\infty} &\lesssim 1 \\  \label{atacama:6}
\| \alpha^{(1)}_A \|_{L^\infty} + \| \alpha^{(1)}_B \|_{L^2}& \lesssim  1 \\  \label{atacama:7}
\| \alpha^{(2)} \langle \psi \rangle^m \|_{L^\infty} + \| \p_s \alpha^{(2)} \langle \psi \rangle^m \|_{L^\infty}& \lesssim  \ve^{0.99} \\ \label{atacama:8}
 \| \p_s^2 \alpha^{(2)} \langle \psi \rangle^m \|_{L^2}& \lesssim \ve^{0.99} \\ \label{aat:4}
 \| \alpha^{(2)} \langle \psi \rangle^m \|_{L^2} + \| \p_s \alpha^{(2)} \langle \psi \rangle^m \|_{L^2} &\lesssim  \ve^{0.99},
\end{align}
upon which using \eqref{atacama:01}, we obtain \eqref{atacama:1}, \eqref{atacama:2}, \eqref{atacama:3}, \eqref{atacama:4}, and \eqref{at:4}.

\vspace{1 mm}

\noindent \textit{Proof of \eqref{atacama:5}:} Clearly, we have 
\begin{align}
\|  \alpha^{(1)} \|_{L^\infty} \lesssim & \frac{1}{\inf |D_{n-1}|} \lesssim \frac{1}{\cc_{n-1}^2 q_e ^2 - \| Q_{n-1} \|_{L^\infty} } \lesssim  \frac{1}{1- \ve^{0.99}} \lesssim 1. 
\end{align}
Next, we have the identity $\p_s \alpha^{(1)} = \frac{\p_s D_{n-1}}{D_{n-1}^2}$. Since we have already established a lower bound on $D_{n-1}$, it suffices to estimate $\p_s D_{n-1}$: 
\begin{align*}
\|  \p_s \alpha^{(1)} \|_{L^\infty} \lesssim & \| \p_s D_{n-1} \|_{L^\infty} \lesssim \| \cc_{n-1}^2 \p_s\{ q_e ^2 \} \|_{L^\infty} + \|\p_s  Q_{n-1} \|_{L^\infty} \lesssim  1 + \ve^{0.99} \lesssim 1, 
\end{align*}
where we have invoked the bootstraps \eqref{mainbd:1} and \eqref{mainbd:2}. This proves the bound \eqref{atacama:5}.
\vspace{1 mm}

\noindent \textit{Proof of \eqref{atacama:6}:} For this bound, we differentiate once more to find the identity 
\begin{align}
\p_s^2 \alpha^{(1)} = &  \frac{\p_s^2 D_{n-1}}{D_{n-1}^2} -2 \frac{|\p_s D_{n-1}|^2}{D_{n-1}^3} \\
= &[ \frac{1}{D_{n-1}^2}  \cc_{n-1}^2 \p_s^2 \{ q_e ^2 \}  + \frac{\p_sD_{n-1}}{D_{n-1}^3} \cc_{n-1}^2 \p_s \{ q_e ^2 \} ] + [ \frac{1}{D_{n-1}^2}\p_s^2 Q_{n-1}  + \frac{\p_sD_{n-1}}{D_{n-1}^3} \p_s Q_{n-1} ] \\
=:& \alpha^{(1)}_A + \alpha^{(1)}_B.
\end{align}
We estimate 
\begin{align}
\| \alpha^{(1)}_A \|_{L^\infty} \lesssim |\cc_{n-1}|^2 + \| \p_s D_{n-1} \|_{L^\infty} |\cc_{n-1}|^2 \lesssim 1, 
\end{align}
and 
\begin{align}
\| \alpha^{(1)}_B \|_{L^2} \lesssim \| \p_s^2 Q_{n-1} \|_{L^2} + \| \p_s D_{n-1} \|_{L^\infty} \| \p_s Q_{n-1} \|_{L^2} \lesssim \ve^{0.99}.
\end{align}
This proves the bound \eqref{atacama:6}.

\noindent \textit{Proof of \eqref{atacama:7}:} We turn now to the definition of $\alpha^{(2)}$. We will use freely the bounds $|D_{n-1}| + |D_{n-2}| \gtrsim 1$ and $\| \p_s D_{n-1} \|_{L^\infty} + \| \p_s D_{n-2} \|_{L^\infty} \lesssim 1$, which have already been established. First, we have 
\begin{align}
\| \alpha^{(2)} \langle \psi \rangle^m \|_{L^\infty} \lesssim \| Q_{n-2} \langle \psi \rangle^m \|_{L^\infty} \lesssim \| Q_{n-2} \|_{X_{2,m}} \lesssim \ve^{0.99}.
\end{align}
Next, we have the identity 
\begin{align} \label{santiago:1}
\p_s \alpha^{(2)} = \frac{\p_s Q_{n-2}}{D_{n-1} D_{n-2}} - \frac{Q_{n-2}\p_s \{ D_{n-1} D_{n-2} \}}{D_{n-1}^2 D_{n-2}^2}
\end{align}
from which we obtain 
\begin{align}
\| \p_s \alpha^{(2)} \langle \psi \rangle^m \|_{L^\infty} &\lesssim \| \p_s Q_{n-2} \langle \psi \rangle^m \|_{L^\infty} + \| Q_{n-2} \langle \psi \rangle^m \|_{L^\infty} ( \| D_{n-1} \|_{L^\infty} \| \p_s D_{n-2} \|_{L^\infty} \\
&\quad+ \| D_{n-2} \|_{L^\infty} \| \p_s D_{n-1} \|_{L^\infty}) \\
&\lesssim \| \ Q_{n-2}  \|_{X_{2,m}} + \| Q_{n-2}  \|_{X_{1,m}} ( \| D_{n-1} \|_{L^\infty} \| \p_s D_{n-2} \|_{L^\infty} \\
&\quad+ \| D_{n-2} \|_{L^\infty} \| \p_s D_{n-1} \|_{L^\infty}) \\
&\lesssim  \ve^{0.99}.
\end{align}
This proves the bound \eqref{atacama:7}.
\vspace{1 mm}

\noindent \textit{Proof of \eqref{atacama:8}:} We differentiate \eqref{santiago:1} again to obtain the identity 
\begin{align}
\p_s^2 \alpha^{(2)} = &  \frac{\p_s^2 Q_{n-2}}{D_{n-1} D_{n-2}} -2 \frac{\p_s Q_{n-2}\p_s \{ D_{n-1} D_{n-2} \}}{D_{n-1}^2 D_{n-2}^2} + 2\frac{Q_{n-2}|\p_s \{ D_{n-1} D_{n-2} \}|^2 }{D_{n-1}^3 D_{n-2}^3},
\end{align}
after which we obtain the bound 
\begin{align}
\| \p_s^2 \alpha^{(2)} \langle \psi \rangle^m \|_{L^2} \lesssim &\| \p_s^2 Q_{n-2} \langle \psi \rangle^m \|_{L^2} + \| \p_s Q_{n-2} \langle \psi \rangle^m \|_{L^2} \| \p_s \{ D_{n-1} D_{n-2} \} \|_{L^\infty} \\
& + \|Q_{n-2} \langle \psi \rangle^m \|_{L^2} \| \p_s \{ D_{n-1} D_{n-2} \} \|_{L^\infty}^2 \\
\lesssim & \| Q_{n-2} \|_{X_{2,m}} \\
\lesssim & \ve^{0.99}.
\end{align}
This proves the bound \eqref{atacama:8}. 

\vspace{1 mm}

\noindent \textit{Proof of \eqref{aat:4}:} We have 
\begin{align}
\| \alpha^{(2)} \langle \psi \rangle^m \|_{L^2} \lesssim \| Q_{n-2} \langle \psi \rangle^m \|_{L^2} \lesssim \| Q_{n-2} \|_{X_{2,m}} \lesssim \ve^{0.99},
\end{align}
and upon using \eqref{santiago:1}, we have 
\begin{align}
\| \p_s \alpha^{(2)} \langle \psi \rangle^m \|_{L^2} \lesssim &\| \p_s Q_{n-2} \langle \psi \rangle^m \|_{L^2} + \| Q_{n-2} \langle \psi \rangle^m \|_{L^2} ( \| D_{n-1} \|_{L^\infty} \| \p_s D_{n-2} \|_{L^\infty} \\
&+ \| D_{n-2} \|_{L^\infty} \| \p_s D_{n-1} \|_{L^\infty}) \\
\lesssim &\| \ Q_{n-2}  \|_{X_{2,m}} + \| Q_{n-2}  \|_{X_{1,m}} ( \| D_{n-1} \|_{L^\infty} \| \p_s D_{n-2} \|_{L^\infty} \\
&+ \| D_{n-2} \|_{L^\infty} \| \p_s D_{n-1} \|_{L^\infty}) \\
\lesssim & \ve^{0.99}.
\end{align}

We have therefore established \eqref{atacama:5} -- \eqref{aat:4} and hence \eqref{atacama:1} -- \eqref{at:4}. From here, the desired estimates, \eqref{des:1} -- \eqref{des:2}, follow from an application of the product rule applied to the identity \eqref{calama:1}. Indeed, we have:
\begin{align}
\| (U_{n-1} - U_{n-2} ) \langle \psi \rangle^m \|_{L^\infty} \lesssim & \| \alpha \|_{L^\infty} \| \Delta Q_{n-1} \langle \psi \rangle^m \|_{L^\infty}  +  \| \gamma \langle \psi \rangle^m \|_{L^\infty}  \ve |\Delta \overline{\cc}_{n-1}| \\
\lesssim & \| \Delta Q_{n-1}  \|_{X_{2,m}} + \ve^{1.99} |\Delta \overline{\cc}_{n-1}|,
\end{align}
where we have used the bounds \eqref{atacama:1} and \eqref{atacama:3}. In $L^2$, we similarly have 
\begin{align}
\| (U_{n-1} - U_{n-2} ) \langle \psi \rangle^m \|_{L^2} \lesssim & \| \alpha \|_{L^\infty} \| \Delta Q_{n-1} \langle \psi \rangle^m \|_{L^2}  +  \| \gamma \langle \psi \rangle^m \|_{L^2}  \ve |\Delta \overline{\cc}_{n-1}| \\
\lesssim & \| \Delta Q_{n-1}  \|_{X_{2,m}} + \ve^{1.99} |\Delta \overline{\cc}_{n-1}|,
\end{align}
where we have used the bounds \eqref{atacama:1} and \eqref{at:4}.

Next, we have upon differentiating \eqref{calama:1}, the identity 
\begin{align}
\p_s \{ U_{n-1} - U_{n-2} \} = \alpha \p_s \Delta Q_{n-1} + \Delta Q_{n-1} \p_s \alpha + \ve |\Delta \cc_{n-1}| \p_s \gamma,
\end{align}
after which we have the following $L^\infty$ bound: 
\begin{align}
\| \p_s \{ U_{n-1} - U_{n-2} \}  \langle \psi \rangle^m \|_{L^\infty} \lesssim & \| \alpha \|_{L^\infty} \| \p_s \Delta Q_{n-1} \langle \psi \rangle^m \|_{L^\infty} + \| \p_s \alpha \|_{L^\infty} \| \Delta Q_{n-1} \langle \psi \rangle^m \|_{L^\infty} \\
& + \ve \| \p_s \gamma \langle \psi \rangle^m \|_{L^\infty} |\Delta \cc_{n-1}| \\
\lesssim &  \| \Delta Q_{n-1}  \|_{X_{2,m}} + \ve^{1.99} |\Delta \cc_{n-1}|,
\end{align}
where we have invoked \eqref{atacama:1} and \eqref{atacama:3}. In $L^2$, we similarly have
\begin{align}
\| \p_s \{ U_{n-1} - U_{n-2} \}  \langle \psi \rangle^m \|_{L^2} \lesssim & \| \alpha \|_{L^\infty} \| \p_s \Delta Q_{n-1} \langle \psi \rangle^m \|_{L^2} + \| \p_s \alpha \|_{L^\infty} \| \Delta Q_{n-1} \langle \psi \rangle^m \|_{L^2} \\
& + \ve \| \p_s \gamma \langle \psi \rangle^m \|_{L^2} |\Delta \cc_{n-1}| \\
\lesssim &  \| \Delta Q_{n-1}  \|_{X_{2,m}} + \ve^{1.99} |\Delta \cc_{n-1}|,
\end{align}
where we have used the bounds \eqref{atacama:1} and \eqref{at:4}.

Differentiating \eqref{calama:1} twice in $s$, we obtain the identity 
\begin{align}
\p_s^2 \{ U_{n-1} - U_{n-2} \} = & \alpha \p_s^2 \Delta Q_{n-1} + \Delta Q_{n-1} \p_s^2 \alpha + 2  \p_s \Delta Q_{n-1} \p_s \alpha + \ve |\Delta \cc_{n-1}| \p_s^2 \gamma \\
= &  \alpha \p_s^2 \Delta Q_{n-1} + \Delta Q_{n-1} \alpha_A +  \Delta Q_{n-1} \alpha_B + 2  \p_s \Delta Q_{n-1} \p_s \alpha \\
& + \ve |\Delta \cc_{n-1}| \p_s^2 \gamma,
\end{align}
where we use the decomposition $\p_s^2 \alpha = \alpha_A + \alpha_B$. We now estimate the $L^2$ norm as follows: 
\begin{align}
\| \p_s^2 \{ U_{n-1} - U_{n-2} \} \langle \psi \rangle^m \|_{L^2} \lesssim & \| \alpha \|_{L^\infty} \|  \p_s^2 \Delta Q_{n-1} \langle \psi \rangle^m \|_{L^2} + \| \alpha_A \|_{L^\infty} \| \Delta Q_{n-1} \langle \psi \rangle^m \|_{L^2} \\
&+ \| \Delta Q_{n-1} \langle \psi \rangle^m \|_{L^\infty} \| \alpha_B \|_{L^2} + \| \p_s \Delta Q_{n-1} \langle \psi \rangle^m \|_{L^2} \| \p_s \alpha \|_{L^\infty} \\
& + \ve \| \p_s^2 \gamma \langle \psi \rangle^m \|_{L^2} |\Delta \cc_{n-1}| \\
\lesssim &\|  \p_s^2 \Delta Q_{n-1} \langle \psi \rangle^m \|_{L^2} +  \| \Delta Q_{n-1} \langle \psi \rangle^m \|_{L^2} + \| \Delta Q_{n-1} \langle \psi \rangle^m \|_{L^\infty} \\
&+ \| \p_s \Delta Q_{n-1} \langle \psi \rangle^m \|_{L^2}  + \ve^{1.99} |\Delta \cc_{n-1}| \\
\lesssim &  \| \Delta Q_{n-1}  \|_{X_{2,m}} + \ve^{1.99} |\Delta \cc_{n-1}|,
\end{align}
where we have used the bounds \eqref{atacama:1} -- \eqref{atacama:4}. We have therefore established the bounds \eqref{des:1} -- \eqref{des:2}, and this concludes the proof of the lemma. 
\end{proof}

\begin{lem} Assume \eqref{mainbd:1}--\eqref{mainbd:4} are valid until  the index $n - 1$. Assume \eqref{mainbd:1} -- \eqref{mainbd:3} are valid until  the index $n$. Then 
\begin{align} \label{desjk}
\| \Delta Q_n \|_{X_{2, 50}} \le &  \ve^{\frac34} \| \Delta Q_{n-1} \|_{X_{2,50}} + \ve^{\frac12} |\Delta \overline{\cc}_n| + \ve^{\frac32} |\Delta \overline{\cc}_{n-1}|.
\end{align}
\end{lem}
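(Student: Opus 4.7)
The plan is to view the difference equation \eqref{diffapprox} as an instance of the abstract system \eqref{real:1} with $\cc = \cc_{n-1}$ and apply Proposition \ref{pro:pro:1} at level $k=2$, $m=50$. I will decompose the right-hand side of \eqref{diffapprox} as $F + \partial_\psi^2 G$ with
\begin{align*}
F &:= f(Q_{n-1},s;\cc_{n-1}) - f(Q_{n-2},s;\cc_{n-2}), \qquad G := (\cc_{n-1} - \cc_{n-2})\, q_e\, Q_{n-1},
\end{align*}
and identify the boundary data $b := \ve\, \Delta\overline{\cc}_n\, q_e^2$. The choice of $\overline{\cc}_n$ dictated by \eqref{bar:choice} is precisely what enforces the Feynman--Lagerstrom compatibility hypothesis of Proposition \ref{pro:pro:1} for the $\Delta Q_n$ system (this is checked by taking the zero $s$-mode of \eqref{diffapprox}, integrating twice in $\psi$, and using the integration-by-parts identity $\int_0^\infty y\, \partial_\psi^2 Q_{n-1}\, dy = Q_{n-1}(s,0)$ to compare with \eqref{aug:1:2}).

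Once the abstract framework applies, estimating the data pieces is routine. The boundary term is immediate: $\|b\|_{H^3_s} \lesssim \ve\, |\Delta\overline{\cc}_n|$. For $G$, using $|\cc_{n-1} - \cc_{n-2}| = |\cc_{n-1}^2 - \cc_{n-2}^2|/|\cc_{n-1} + \cc_{n-2}| \lesssim \ve\,|\Delta\overline{\cc}_{n-1}|$ together with the bootstrap \eqref{mainbd:2},
\begin{align*}
\sum_{k' \le 2}\sum_{j \le 2}\|\partial_s^{k'}\partial_\psi^j G\, \langle\psi\rangle^{50}\|_{L^2} \lesssim \ve\,|\Delta\overline{\cc}_{n-1}|\cdot \|Q_{n-1}\|_{X_{2,50}} \lesssim \ve^{1.99}\,|\Delta\overline{\cc}_{n-1}|.
\end{align*}

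The main work is the estimate of $F$ and its $s$-derivatives in the weight $\langle\psi\rangle^{54}$. Mirroring the $BD_1+BD_2$ splitting used in the $\Delta\overline{\cc}_n$ estimate, I will write
\begin{align*}
F = \bigl(1 - \sqrt{1-U_{n-1}}\bigr)\, \partial_s \Delta Q_{n-1} + \Bigl[\bigl(1 - \sqrt{1-U_{n-1}}\bigr) - \bigl(1 - \sqrt{1-U_{n-2}}\bigr)\Bigr]\, \partial_s Q_{n-2},
\end{align*}
and estimate each piece at each $\partial_s^{k'}$-level using (i) the bootstraps \eqref{mainbd:1}--\eqref{mainbd:2}, (ii) the product rule together with the Sobolev embedding $X_{1,0}\hookrightarrow L^\infty$ to distribute factors, and (iii) the auxiliary $U_{n-1}-U_{n-2}$ bounds \eqref{des:1}--\eqref{des:2}. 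The outcome is
\begin{align*}
\sum_{k' \le 2}\|\partial_s^{k'} F\, \langle\psi\rangle^{54}\|_{L^2} \lesssim \ve^{0.99}\,\|\Delta Q_{n-1}\|_{X_{2,50}} + \ve^{1.99}\,|\Delta\overline{\cc}_{n-1}|.
\end{align*}
Inserting the three estimates into \eqref{sam:ad:1} yields $\|\Delta Q_n\|_{X_{2,50}} \lesssim \ve^{0.99}\|\Delta Q_{n-1}\|_{X_{2,50}} + \ve\,|\Delta\overline{\cc}_n| + \ve^{1.99}\,|\Delta\overline{\cc}_{n-1}|$, which is stronger than \eqref{desjk} for $\ve$ sufficiently small since $\ve^{0.99} \le \ve^{3/4}$, $\ve \le \ve^{1/2}$, and $\ve^{1.99} \le \ve^{3/2}$.

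The principal obstacle is bookkeeping in the $\partial_s^2 F$ bound: the worst contribution is $\partial_s^2 U_{n-2}$ paired with $\partial_s Q_{n-2}$, for which one must place $\partial_s^2 U_{n-2}$ in weighted $L^2$ (via \eqref{des:2}) and $\partial_s Q_{n-2}$ in weighted $L^\infty$ (via the embedding into $X_{1,\cdot}$ and the bootstrap \eqref{mainbd:2}); the remaining five product-rule combinations are handled analogously to the auxiliary expansion used inside the proof of Lemma \ref{hyu:1}, and all close with a factor of at worst $\ve^{0.99}$.
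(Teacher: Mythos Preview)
Your proposal is correct and follows essentially the same approach as the paper: identical choice of $F$, $G$, $b$ feeding into Proposition~\ref{pro:pro:1}, the same $F_1+F_2$ splitting, and the same use of the auxiliary bounds \eqref{des:1}--\eqref{des:2} to control the $U_{n-1}-U_{n-2}$ differences (the paper carries out the $\partial_s^2 F_2$ bookkeeping in full, labeling the pieces $C_{i,j}$, but the content is what you describe). One cosmetic remark: your $G$ carries an extra factor of $q_e$ compared to the paper's $G$ (reflecting a missing $q_e$ in the paper's displayed \eqref{diffapprox}); either way the estimate is unchanged since $q_e$ is smooth.
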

\begin{proof} For this estimate, motivated by \eqref{diffapprox}, we set 
\begin{align}
F := & \left(1-  \tfrac{\cc_{n-1}  q_e }{\sqrt{\cc_{n-1} ^2 q_e ^2+ Q_{n-1} }}\right)\partial_s Q_{n-1}-  \left(1-  \tfrac{\cc_{n-2}  q_e }{\sqrt{\cc_{n-2} ^2 q_e ^2+ Q_{n-2} }}\right)\partial_s Q_{n-2}, \\
G := & ( \cc_{n-1} -\cc_{n-2})  Q_{n-1} ,\\
b := &  \ve ( \overline{\cc}_{n}-  \overline{\cc}_{n-1}) q_e ^2(s).
\end{align} 
According to \eqref{sam:ad:1}, we fix $k = 2, m = 50$, which results in 
\begin{align} \label{jkl:1}
\| \Delta Q_n \|_{X_{2, 50}} \lesssim & \sum_{k' = 0}^{2} \| \p_s^{k'}  F \langle \psi \rangle^{54} \|_{L^2} +  \sum_{k' = 0}^{2} \sum_{j = 0}^2 \| \p_s^{k'} \p_\psi^j  G \langle \psi \rangle^{50} \|_{L^2}  +\| b \|_{H^{3}_s}.
\end{align}
We therefore estimate the two quantities appearing on the right-hand side above. We first address the term $F$, which we rewrite as follows 
\begin{align}
F = & (1 - \sqrt{1 - U_{n-1}}) \p_s Q_{n-1} - (1 - \sqrt{1 - U_{n-2}}) \p_s Q_{n-2} \\
= & (1 - \sqrt{1 - U_{n-1}}) \p_s \Delta Q_{n-1} + \p_s Q_{n-2}(\sqrt{1 - U_{n-1}} - \sqrt{1 - U_{n-2}}) := F_1 + F_2.
\end{align}
An identical calculation to the estimate of the forcing, $F$, in Lemma \ref{hyu:1} results in the bound 
\begin{align}
\| F_1 \langle \psi \rangle^{54} \|_{H^2_s L^2_\psi} \lesssim \ve^{0.99} \| \Delta Q_{n-1} \|_{X_{2,50}}.
\end{align} 
We develop the following identities 
\begin{align}
\p_s F_2 &=   \p_s^2 Q_{n-2}\left(\sqrt{1 - U_{n-1}} - \sqrt{1 - U_{n-2}}\right) - \frac12 \p_s Q_{n-2}\Big( \frac{\p_s U_{n-1}}{\sqrt{1 - U_{n-1}}} - \frac{\p_s U_{n-2}}{\sqrt{1-U_{n-2}}} \Big) \\
&=   \p_s^2 Q_{n-2}\left(\sqrt{1 - U_{n-1}} - \sqrt{1 - U_{n-2}}\right) - \frac12 \p_s Q_{n-2}\frac{\p_s U_{n-1} - \p_s U_{n-2}}{\sqrt{1 - U_{n-1}}}  \\
& \quad- \frac12 \p_s Q_{n-2} \Big( \frac{\p_s U_{n-2}}{\sqrt{1 - U_{n-1}}} - \frac{\p_s U_{n-2}}{\sqrt{1 - U_{n-2}}} \Big)\\
&=  C_1 + C_2 + C_3.
\end{align}

We estimate $\p_s F_2$ as follows. First, 
\begin{align*}
\| C_1 \langle \psi \rangle^{54} \|_{L^2}& \lesssim  \| (\sqrt{1 - U_{n-1}} - \sqrt{1 - U_{n-2}} ) \langle \psi \rangle^{4} \|_{L^\infty} \| \p_s^2 Q_{n-2} \langle \psi \rangle^{50} \|_{L^2} \\
&\lesssim  \| (U_{n-1} -  U_{n-2})  \langle \psi \rangle^{4} \|_{L^\infty} \| \p_s^2 Q_{n-2} \langle \psi \rangle^{50} \|_{L^2}  \\
&\lesssim  \| (U_{n-1} -  U_{n-2})  \langle \psi \rangle^{4} \|_{L^\infty} \| Q_{n-2}  \|_{X_{1,50}} \\
&\lesssim  \ \ve^{0.99}  \| (U_{n-1} -  U_{n-2})  \langle \psi \rangle^{4} \|_{L^\infty}.
\end{align*}
Next, to estimate $C_2$, we have 
\begin{align*}
\| C_2 \langle \psi \rangle^{54} \|_{L^2}& \lesssim  \| \p_s Q_{n-2} \langle \psi \rangle^{50} \|_{L^\infty} \|( \p_s U_{n-1} - \p_s U_{n-2}) \langle \psi \rangle^{4} \|_{L^2} \\
&\lesssim  \| \p_s Q_{n-2}  \|_{X_{2,50}} \|( \p_s U_{n-1} - \p_s U_{n-2}) \langle \psi \rangle^{4} \|_{L^2} \\
&\lesssim \  \ve^{0.99}\|( \p_s U_{n-1} - \p_s U_{n-2}) \langle \psi \rangle^{4} \|_{L^2}.
\end{align*}
Finally, to estimate $C_3$, we have 
\begin{align*}
\| C_3 \langle \psi \rangle^{54} \|_{L^2} \lesssim &\lw \| \p_s Q_{n-2} \langle \psi \rangle^{50} \rw\|_{L^2} \| \p_s U_{n-2} \langle \psi \rangle^{4} \|_{L^\infty} \| U_{n-1} - U_{n-2} \|_{L^\infty} \\
\lesssim & \| Q_{n-2} \|_{X_{0,50}} \| Q_{n-2} \|_{X_{2,4}}\| U_{n-1} - U_{n-2} \|_{L^\infty} \\
\lesssim &\  \ve^{0.99} \ve^{0.99} \| U_{n-1} - U_{n-2} \|_{L^\infty}.
\end{align*}

We now move to the second derivative, $\p_s^2 F_2$, which we will treat as follows:  
\begin{align}
\p_s^2 F_2 = \p_s C_1 + \p_s C_2 + \p_s C_3.
\end{align}
We have 
\begin{align}
\p_s C_1 = & \p_s^3 Q_{n-2}(\sqrt{1 - U_{n-1}} - \sqrt{1 - U_{n-2}}) -\frac12 \p_s^2 Q_{n-2} \Big( \frac{\p_s U_{n-1}}{\sqrt{1- U_{n-1}}} - \frac{\p_s U_{n-2}}{\sqrt{1 - U_{n-2}}} \Big) \\
= &  \p_s^3 Q_{n-2}(\sqrt{1 - U_{n-1}} - \sqrt{1 - U_{n-2}})-\frac12 \p_s^2 Q_{n-2} \Big( \frac{\p_s U_{n-1} - \p_s U_{n-2}}{\sqrt{1- U_{n-1}}} \Big) \\
& - \frac12 \p_s^2 Q_{n-2} \Big( \frac{\p_s U_{n-2}}{\sqrt{1 - U_{n-1}}} - \frac{\p_s U_{n-2}}{\sqrt{1 - U_{n-2}}}\Big) \\
= :& C_{1,1} + C_{1,2} + C_{1,3}.
\end{align}
First, we estimate 
\begin{align*}
\| C_{1,1} \langle \psi \rangle^{54} \|_{L^2} &\lesssim\lw  \| (\sqrt{1 - U_{n-1}} - \sqrt{1 - U_{n-2}} ) \langle \psi \rangle^{4}\rw \|_{L^\infty} \lw\| \p_s^3 Q_{n-2} \langle \psi \rangle^{50} \rw\|_{L^2} \\
&\lesssim   \lw\| (U_{n-1} -  U_{n-2})  \langle \psi \rangle^{4} \rw\|_{L^\infty} \| \p_s^3 Q_{n-2} \langle \psi \rangle^{50} \|_{L^2}  \\
&\lesssim  \lw \| (U_{n-1} -  U_{n-2})  \langle \psi \rangle^{4}\rw \|_{L^\infty} \| Q_{n-2}  \|_{X_{2,50}} \\
&\lesssim  \ \ve^{0.99}  \| (U_{n-1} -  U_{n-2})  \langle \psi \rangle^{4} \|_{L^\infty}.
\end{align*}
Next, to estimate $C_{1,2}$, we have 
\begin{align*}
\| C_{1,2} \langle \psi \rangle^{54} \|_{L^2} &\lesssim  \| \p_s^2 Q_{n-2} \langle \psi \rangle^{50} \|_{L^2} \|( \p_s U_{n-1} - \p_s U_{n-2}) \langle \psi \rangle^{4} \|_{L^\infty} \\
&\lesssim  \| Q_{n-2}  \|_{X_{1,50}} \|( \p_s U_{n-1} - \p_s U_{n-2}) \langle \psi \rangle^{4} \|_{L^\infty} \\
&\lesssim  \ \ve^{0.99}\|( \p_s U_{n-1} - \p_s U_{n-2}) \langle \psi \rangle^{4} \|_{L^\infty}.
\end{align*}
Next, to estimate $C_{1,3}$, we have 
\begin{align*}
\| C_{1,3} \langle \psi \rangle^{54} \|_{L^2} & \lesssim  \| \p_s^2 Q_{n-2} \langle \psi \rangle^{50} \|_{L^2} \| \p_s U_{n-2} \langle \psi \rangle^{4} \|_{L^\infty} \| U_{n-1} - U_{n-2} \|_{L^\infty} \\
&\lesssim  \| Q_{n-2} \|_{X_{1,50}} \| Q_{n-2} \|_{X_{2,4}}\| U_{n-1} - U_{n-2} \|_{L^\infty} \\
&\lesssim \  \ve^{0.99} \ve^{0.99} \| U_{n-1} - U_{n-2} \|_{L^\infty}.
\end{align*}
We next move to the $\p_s C_2$ contributions, for which we record the identity 
\begin{align}
\p_s C_2 = &  - \frac12 \p_s^2 Q_{n-2}\frac{\p_s U_{n-1} - \p_s U_{n-2}}{\sqrt{1 - U_{n-1}}}   - \frac12 \p_s Q_{n-2}\frac{\p_s^2 U_{n-1} - \p_s^2 U_{n-2}}{\sqrt{1 - U_{n-1}}} \\
& + \frac14 \p_s Q_{n-2} \p_s U_{n-1} (1 - U_{n-1})^{-\frac32}(\p_s U_{n-1} - \p_s U_{n-2}) \\
=: & C_{2,1}+C_{2,2} + C_{2,3}.
\end{align} 
We first estimate $C_{2,1}$ for which we have 
\begin{align*}
\| C_{2,1} \langle \psi \rangle^{54} \|_{L^2} &\lesssim  \| \p_s^2 Q_{n-2} \langle \psi \rangle^{50} \|_{L^2} \| (\p_s U_{n-1} - \p_s U_{n-2}) \langle \psi \rangle^{4} \|_{L^\infty} \\
&\lesssim \| Q_{n-2} \|_{X_{1,50}} \| (\p_s U_{n-1} - \p_s U_{n-2}) \langle \psi \rangle^{4} \|_{L^\infty}\\
&\lesssim \  \ve^{0.99} \| (\p_s U_{n-1} - \p_s U_{n-2}) \langle \psi \rangle^{4} \|_{L^\infty}.
\end{align*}
Next, we have 
\begin{align*}
\| C_{2,2} \langle \psi \rangle^{54} \|_{L^2}& \lesssim  \| \p_s Q_{n-2} \langle \psi \rangle^{50} \|_{L^\infty} \|( \p_s^2 U_{n-1} - \p_s^2 U_{n-2}) \langle \psi \rangle^{4} \|_{L^2} \\
&\lesssim  \| \p_s Q_{n-2}  \|_{X_{2,50}} \|( \p_s^2 U_{n-1} - \p_s^2 U_{n-2}) \langle \psi \rangle^{4} \|_{L^2} \\
&\lesssim \  \ve^{0.99}\|( \p_s^2 U_{n-1} - \p_s^2 U_{n-2}) \langle \psi \rangle^{4} \|_{L^2}.
\end{align*}
Finally, we have the $C_{2,3}$ contribution for which we estimate
\begin{align*}
\| C_{2,3} \langle \psi \rangle^{54} \|_{L^2}& \lesssim  \| \p_s Q_{n-2} \langle \psi \rangle^{50} \|_{L^\infty} \| \p_s U_{n-1} \langle \psi \rangle^4 \|_{L^\infty} \| (\p_s U_{n-1} - \p_s U_{n-2}) \|_{L^2} \\
&\lesssim  \| Q_{n-2}  \|_{X_{2,50}} \|Q_{n-1} \|_{X_{2,4}} \| (\p_s U_{n-1} - \p_s U_{n-2}) \|_{L^2} \\
&\lesssim \  \ve^{0.99}  \ve^{0.99} \| (\p_s U_{n-1} - \p_s U_{n-2}) \|_{L^2}.
\end{align*}

We next compute $\p_s C_3$, which results in the following identity, 
\begin{align}
\p_s C_3 =&\   \frac12 \p_s^2 Q_{n-2} \Big( \frac{\p_s U_{n-2}}{\sqrt{1 - U_{n-1}}} - \frac{\p_s U_{n-2}}{\sqrt{1 - U_{n-2}}} \Big) \\
& +  \frac12 \p_s Q_{n-2} \Big( \frac{\p_s^2 U_{n-2}}{\sqrt{1 - U_{n-1}}} - \frac{\p_s^2 U_{n-2}}{\sqrt{1 - U_{n-2}}} \Big) \\
& \quad - \frac14 \p_s Q_{n-2} \p_s U_{n-2} \frac{1}{(1 - U_{n-1})^{\frac32}} (\p_s U_{n-1} - \p_s U_{n-2})  \\
&\quad \quad  - \frac14 \p_s Q_{n-2} |\p_s U_{n-2}|^2\left( \frac{1}{(1- U_{n-1})^{\frac32}}  - \frac{1}{(1- U_{n-2})^{\frac32}} \right) =: \sum_{i = 1}^4 C_{3,i}.
\end{align}
We estimate first $C_{3,1}$ as follows 
\begin{align}
\|  C_{3,1} \langle \psi \rangle^{54} \|_{L^2}& \lesssim \| \p_s^2 Q_{n-2} \langle \psi \rangle^{50} \|_{L^2} \| \p_s U_{n-2} \langle \psi \rangle^{4} \|_{L^\infty} \| U_{n-1} - U_{n-2} \|_{L^\infty} \\
&\lesssim \| Q_{n-2}\|_{X_{1,50}} \| Q_{n-2} \|_{X_{2,4}} \| U_{n-1} - U_{n-2} \|_{L^\infty} \\
&\lesssim \  \ve^{0.99} \ve^{0.99}  \| U_{n-1} - U_{n-2} \|_{L^\infty}.
\end{align}
Next, we estimate $C_{3,2}$ as follows 
\begin{align}
\|  C_{3,2} \langle \psi \rangle^{54} \|_{L^2} &\lesssim \| \p_s Q_{n-2} \langle \psi \rangle^{50} \|_{L^\infty} \| \p_s^2 U_{n-2} \langle \psi \rangle^4 \|_{L^2} \| U_{n-1} - U_{n-2} \|_{L^\infty} \\
&\lesssim  \|Q_{n-2}  \|_{X_{2,50}} \| \p_s^2 U_{n-2} \langle \psi \rangle^4 \|_{L^2} \| U_{n-1} - U_{n-2} \|_{L^\infty} \\
&\lesssim \|Q_{n-2}  \|_{X_{2,50}} \|  Q_{n-2}  \|_{X_{2,4}} \| U_{n-1} - U_{n-2} \|_{L^\infty} \\
&\lesssim \ \ve^{0.99} \ve^{0.99} \| U_{n-1} - U_{n-2} \|_{L^\infty}.
\end{align}
Next, we estimate $C_{3,3}$ as follows 
\begin{align}
\|  C_{3,3} \langle \psi \rangle^{54} \|_{L^2} &\lesssim  \| \p_s Q_{n-2} \langle \psi \rangle^{50} \|_{L^\infty} \| \p_s U_{n-2} \langle \psi \rangle^{4} \|_{L^\infty} \| \p_s U_{n-1} - \p_s U_{n-2} \|_{L^2} \\
&\lesssim  \|  Q_{n-2}  \|_{X_{2,50}} \| Q_{n-2}  \|_{X_{2,4}} \| \p_s U_{n-1} - \p_s U_{n-2} \|_{L^2} \\
&\lesssim \  \ve^{0.99}  \ve^{0.99} \| \p_s U_{n-1} - \p_s U_{n-2} \|_{L^2}.
\end{align}
Finally, we estimate $C_{3,4}$ as follows 
\begin{align}
\|  C_{3,4} \langle \psi \rangle^{54} \|_{L^2} &\lesssim  \| \p_s Q_{n-2} \langle \psi \rangle^{50} \|_{L^\infty} \| \p_s U_{n-2} \langle \psi \rangle^{2} \|_{L^\infty}^2 \|  U_{n-1} -  U_{n-2} \|_{L^2} \\
&\lesssim  \|  Q_{n-2}  \|_{X_{2,50}} \| Q_{n-2}  \|_{X_{2,2}}^2 \| U_{n-1} - U_{n-2} \|_{L^2} \\
&\lesssim \ \ve^{0.99}  \ve^{0.99} \ve^{0.99} \| U_{n-1} - U_{n-2} \|_{L^2}.
\end{align}
Now, upon invoking \eqref{des:1} -- \eqref{des:2}, the above estimates give 
\begin{align}\label{h20:1}
&\| F_2 \langle \psi \rangle^{54} \|_{H^2_s L^2_\psi} \\
&\lesssim \ve^{0.99} \Big( \sum_{j = 0}^1 \| ( \p_s^j U_{n-1} - \p_s^j U_{n-2}) \langle \psi \rangle^4 \|_{L^\infty} + \sum_{j = 0}^2 \| ( \p_s^j U_{n-1} - \p_s^j U_{n-2}) \langle \psi \rangle^4 \|_{L^2}\Big) \\ 
&\lesssim \  \ve^{0.99}\lw (\| \Delta Q_{n-1} \|_{X_{2,4}} + \eps^{1.99} |\Delta \overline{c}_{n-1}|\rw).
\end{align} 
Next, we clearly have 
\begin{align}
& \sum_{k' = 0}^{2} \sum_{j = 0}^2 \| \p_s^{k'} \p_\psi^j  G \langle \psi \rangle^{50} \|_{L^2}\\
 & \le  |\cc_{n-1} - \cc_{n-2}|  \sum_{k' = 0}^{2} \sum_{j = 0}^2\lw \| \p_s^{k'} \p_\psi^j Q_{n-1}   \langle \psi \rangle^{50} \rw\|_{L^2} \\
 &\lesssim  \frac{1}{\lw|\cc_{n-1} + \cc_{n-2}\rw|} |\cc_{n-1}^2 - \cc_{n-2}^2| \| Q_{n-1} \|_{X_{2,50}} \\
& \lesssim \ \ve \ve^{0.99}  |\overline{\cc}_{n-1} - \overline{\cc}_{n-2}|.
\end{align}
Finally, we have the boundary condition
\begin{align}
\| b \|_{H^3_s} \lesssim \ve \| q_e  \|_{H^3}^3 |\Delta \overline{\cc}_n|.
\end{align}
Consolidating all the above bounds with estimate \eqref{desjk} concludes the proof of the lemma. 
\end{proof}

\appendix

\section{Derivation of near-boundary Navier-Stokes equations} \label{derivationNS}

In this section, we give the detailed calculations for the Navier-Stokes equations claimed in section \ref{derivation}.
We recall the standard identities, which will be used in the next lemmas:
\[
\bega 
n'(s)=\gamma(s)\tau(s),&\qquad \tau'(s)=\gamma(s)n(s)
\enda
\]
We recall that the map
\[\bega
\{x\in M:\quad 0<{\rm dist}(x,\pt M)<\delta\}&\to \mathbb T_L\times (0,\delta)\\
x&\to (s,z)=\lw(s(x_1,x_2),z(x_1,x_2)
\rw)
\enda 
\] 
is a diffeomorphism. In this transformation, we have
\beq\label{iden-Tr}
\nabla_x s=\frac{\tau}{J},\qquad \nabla_x z=n(s).
\eeq
For a vector field $u:M\to \mathbb R^2$, we also have 
\beq\label{u-eq-Tr}
\bega 
&u_1=n_2u_\tau-\tau_2u_n=\tau_1u_\tau-\tau_2u_n,\\
&u_2=-n_1u_\tau+\tau_1u_n=\tau_2u_\tau+\tau_1u_n.
\enda
\eeq
\begin{lem}\label{lem1-Tr}
For any vector field $u:M\to \mathbb R^2$ and scalar function $f:M\to \mathbb R$  supported near the boundary $\partial M$ there holds
\[\bega 
&u\cdot \nabla f=\frac{u_\tau}{J}\pt_s f+u_n\pt_z f\\
\enda
\]
In particular, by choosing $u=\tau$ and $u=n$ respectively, there hold
\[\bega
\tau\cdot\nabla f=\frac{1}{J}\pt_s f,\qquad n\cdot\nabla f=\pt_z f\\
\enda
\]
\end{lem}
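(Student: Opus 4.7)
The plan is to compute $\nabla_x f$ in the adapted coordinates via the chain rule, and then decompose the vector field $u$ into its tangential and normal components. Concretely, since $f$ is supported near $\partial M$ we may regard it as a function of $(s,z)$ through the diffeomorphism $x \mapsto (s(x),z(x))$, and the chain rule yields
\[
\nabla_x f = (\partial_s f)\,\nabla_x s + (\partial_z f)\,\nabla_x z.
\]
Substituting the identities \eqref{iden-Tr}, namely $\nabla_x s = \tau/J$ and $\nabla_x z = n$, gives
\[
\nabla_x f = \frac{\tau(s)}{J(s,z)}\,\partial_s f + n(s)\,\partial_z f.
\]

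Next I would write $u = u_\tau \tau + u_n n$, which is precisely the content of \eqref{u-eq-Tr} reassembled componentwise, and take the Euclidean inner product with $\nabla_x f$. Using the orthonormality $\tau\cdot\tau = n\cdot n = 1$, $\tau\cdot n = 0$, the cross terms drop out and we obtain the desired identity
\[
u\cdot\nabla f = \frac{u_\tau}{J}\,\partial_s f + u_n\,\partial_z f.
\]
The two special cases follow immediately: taking $u = \tau$ sets $(u_\tau,u_n) = (1,0)$, and taking $u = n$ sets $(u_\tau,u_n) = (0,1)$.

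There is no real obstacle in this lemma; its content is a one-line application of the chain rule once \eqref{iden-Tr} is in hand. If the identities \eqref{iden-Tr} themselves needed to be justified, I would differentiate $x = x(s) + z\,n(s)$ with respect to $(s,z)$ to get the tangent frame $(\partial_s x, \partial_z x) = (J\tau,n)$, invert this $2\times 2$ matrix using orthonormality of $\{\tau,n\}$, and read off $\nabla_x s$ and $\nabla_x z$ as the rows of the inverse. The only nontrivial ingredient there is $n'(s) = \gamma(s)\tau(s)$, which is already recorded in the preamble to the lemma.
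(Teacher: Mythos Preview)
Your proof is correct and essentially identical to the paper's. The only cosmetic difference is that the paper carries out the chain rule component by component and then collects terms into $u\cdot\tau$ and $u\cdot n$, whereas you compute $\nabla_x f$ as a vector first and then dot with the decomposition $u=u_\tau\tau+u_n n$; these are the same argument in slightly different packaging.
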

\begin{proof}
This follows by direct calculation. We have
\begin{align}
u\cdot \nabla f &=u\cdot\nabla_x f(s,z)=u_1\lw(\pt_sf\pt_{x_1}s+\pt_z f\pt_{x_1}z\rw)+u_2\lw(\pt_sf\pt_{x_2}s+\pt_z f\pt_{x_2}z\rw)\\
&=u_1\lw(\pt_s f\frac{\tau_1}{J}+\pt_z f n_1\rw)+u_2(\pt_s f\cdot\frac{\tau_2}{J}+\pt_z f n_2)\\
&=\frac{u\cdot \tau}{J}\pt_s f+(u\cdot n)\pt_z f.
\end{align}
\end{proof}

\begin{lem} The following identities holds for any given vector field $u:M\to \mathbb R^2$:
\[\bega 
(u\cdot \nabla u)\cdot \tau&=\lw(\frac{u_\tau}{J},u_n\rw)\cdot\nabla_{s,z}u_\tau-\frac{\gamma(s)}{J}u_\tau u_n ,\\
(u\cdot \nabla u)\cdot n&=\lw(\frac{u_\tau}{J},u_n\rw)\cdot\nabla_{s,z}u_n-\frac{\gamma(s)}{J}u_\tau^2,\\
\Delta u\cdot \tau&=\frac{1}{J}\pt_z(J\pt_zu_\tau)+\frac{1}{J}\pt_s\lw(\frac{1}{J}\pt_s u_\tau
\rw)-\frac{1}{J}\pt_s\lw(\frac{\gamma u_n}{J}
\rw)-\frac{\gamma}{J}\lw(\gamma u_\tau+\pt_s u_n
\rw),\\
\Delta u\cdot n&=\frac{1}{J}\pt_z(J\pt_zu_n)+\frac{1}{J}\pt_s\lw(\frac{1}{J}\pt_s u_n
\rw)-\frac{1}{J}\pt_s \lw(\frac{\gamma u_\tau}{J}
\rw)-\frac{\gamma}{J}\lw(\pt_s u_\tau-\gamma u_n \rw),\\
\nabla \cdot u&=\frac{1}{J}\lw(\pt_s u_\tau-\gamma(s)u_n
\rw)+\pt_z u_n,\\
\nabla^\perp \cdot u&=\pt_1u_2-\pt_2 u_1=\frac{\gamma}{J}u_\tau-\pt_zu_\tau+\pt_s u_n.
\enda 
\]
\end{lem}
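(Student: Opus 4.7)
The plan is to verify each of the six identities by direct computation, systematically exploiting the Frenet-type relations $\tau'(s)=\gamma(s)n(s)$ and $n'(s)=\gamma(s)\tau(s)$ stated at the top of the section, together with the gradient formulae $\nabla_x s=\tau/J$, $\nabla_x z=n$ from \eqref{iden-Tr}, and the scalar convection formula from Lemma \ref{lem1-Tr}. The key observation underlying every calculation is that in the adapted coordinates $(s,z)$, the basis vectors $\tau(s),n(s)$ depend only on $s$, so all curvature contributions $\gamma/J$ arise from $\partial_s$ hitting the moving frame, while $\partial_z$ annihilates it.

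For the convective identities $(u\cdot\nabla u)\cdot\tau$ and $(u\cdot\nabla u)\cdot n$, I first decompose $u=u_\tau\tau(s)+u_n n(s)$, then apply Lemma \ref{lem1-Tr} componentwise to write $u\cdot\nabla u=\tfrac{u_\tau}{J}\partial_s u+u_n\partial_z u$. Expanding the $\partial_s$ derivative by the product rule brings in $\tau'=\gamma n$ and $n'=\gamma\tau$ contributions, which after dotting with $\tau$ or $n$ and using orthonormality produce precisely the cross-terms $-\tfrac{\gamma}{J}u_\tau u_n$ and $-\tfrac{\gamma}{J}u_\tau^2$ (the sign arising from the specific sign convention on $\gamma$ used in \eqref{iden-Tr}).

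For the divergence and curl identities, I start from the Cartesian components \eqref{u-eq-Tr}, differentiate using the chain rule and \eqref{iden-Tr}, and collect terms. Equivalently, one may appeal to the general orthogonal curvilinear-coordinate formulae in a frame with scale factors $(h_s,h_z)=(J,1)$:
\begin{equation*}
\nabla\cdot u=\tfrac{1}{J}\bigl(\partial_s u_\tau+\partial_z(J u_n)\bigr),\qquad \nabla^\perp\cdot u=\tfrac{1}{J}\bigl(\partial_z(J u_\tau)-\partial_s u_n\bigr),
\end{equation*}
after which expanding $\partial_z J=\gamma(s)$ yields the claimed expressions (again with the paper's sign convention).

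The main obstacle is the vector Laplacian. The cleanest route is to reduce to the scalar Laplacian in coordinates, $\Delta f=\tfrac{1}{J}\partial_z(J\partial_z f)+\tfrac{1}{J}\partial_s(\tfrac{1}{J}\partial_s f)$, and then handle the vector nature through the 2D identity $\Delta u=\nabla(\nabla\cdot u)-\nabla^\perp(\nabla^\perp\cdot u)$. Using the divergence and curl formulae established just above, one obtains $\Delta u\cdot\tau$ and $\Delta u\cdot n$ by projection, the projection onto $\tau$ being $\tfrac{1}{J}\partial_s$ of the divergence minus $\partial_z$ of the curl, and the projection onto $n$ being $\partial_z$ of the divergence plus $\tfrac{1}{J}\partial_s$ of the curl, with extra $\gamma/J$ pieces coming from $\partial_s\tau$ and $\partial_s n$ when one unwinds the projections. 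The alternative direct route, writing $\Delta(u_\tau\tau+u_n n)$ by the product rule and computing $\Delta\tau,\Delta n,\nabla u_\tau\cdot\nabla\tau,\ldots$ explicitly, is equivalent but requires careful bookkeeping: the terms $u_\tau\gamma^2/J^2$, $\tfrac{1}{J}\partial_s(\gamma u_n/J)$, and their analogues with $\tau\leftrightarrow n$ must be combined by recognizing the conservative form $\tfrac{1}{J}\partial_z(J\partial_z\cdot)$ to absorb the $\gamma/J$ contributions from $\partial_z J$. Either route, the identities in the lemma's statement emerge as the final bookkeeping of curvature terms, completing the derivation.
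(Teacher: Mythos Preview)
Your proposal is correct. For the convective, divergence, and curl identities your method is essentially the paper's: both rely on the chain rule via \eqref{iden-Tr} and Lemma~\ref{lem1-Tr}, the only organizational difference being that you expand $u=u_\tau\tau+u_n n$ from the outset, whereas the paper keeps the Cartesian components $u_1,u_2$ and projects onto $\tau$ (or $n$) at the end. For the vector Laplacian, however, the paper takes a genuinely different and somewhat shorter route: it writes $\Delta u\cdot\tau=\sum_i\tau_i\,\Delta u_i$, applies the \emph{scalar} curvilinear Laplacian $\Delta f=\tfrac{1}{J}\partial_z(J\partial_z f)+\tfrac{1}{J}\partial_s(\tfrac{1}{J}\partial_s f)$ to each Cartesian component $u_i$, and then collects the frame terms coming from $\tau_i'(s)$ when $\tau_i$ is commuted past $\partial_s$. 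This bypasses both the identity $\Delta u=\nabla(\nabla\cdot u)-\nabla^\perp(\nabla^\perp\cdot u)$ and any prior use of the divergence and curl formulae, so it has fewer moving parts; your route, on the other hand, is more systematic and makes the provenance of each curvature term (as a $\partial_s$ hitting the frame) structurally transparent.
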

\begin{proof}
We check the first, the third  and the fifth identities only, and the proofs for other identities are similar.
We have 
\[\bega 
(u\cdot\nabla u)\cdot\tau &=(u\cdot\nabla_x u_1)\tau_1+(u\cdot\nabla_x u_2)\tau_2\\
&=\lw(\frac{u_\tau}{J},u_n\rw) \cdot\nabla_{s,z} u_1 \tau_1+\lw(\frac{u_\tau}{J},u_n\rw)\cdot\nabla_{s,z} u_2 \tau_2\\
&=\lw(\frac{u_\tau}{J},u_n
\rw)\cdot\nabla_{s,z}u_\tau-\frac{u_\tau}{J} \lw(\tau_1'(s)u_1+\tau_2'(s)u_2\rw)
\enda 
\]
We note that 
\[\bega 
\tau_1'u_1+\tau_2'u_2&=\gamma n_1(\tau_1u_\tau-\tau_2u_n)+\gamma n_2(\tau_2u_\tau+\tau_1u_n)\\
&=\gamma(-\tau_2)(\tau_1u_\tau-\tau_2u_n)+\gamma \tau_1(\tau_2u_\tau+\tau_1u_n)\\
&=-\gamma\tau_1\tau_2u_\tau+\gamma\tau_2^2 u_n+\gamma \tau_1\tau_2u_\tau+\gamma \tau_1^2 u_n=\gamma u_n 
\enda
\]
Combining the above with the previous calculation, we obtain
\[
(u\cdot\nabla u)\cdot\tau=\lw(\frac{u_\tau}{J},u_n\rw)\cdot\nabla_{s,z}u_\tau-\frac{\gamma(s)}{J}u_\tau u_n \]
Now we show the third identity. We have 
\[\bega 
\Delta u\cdot \tau&=\Delta u_1\cdot \tau_1+\Delta u_2\cdot \tau_2=\sum_{i=1}^2\frac{1}{J}\lw(\pt_z(J\pt_z u_i)+\pt_s\lw(\frac{1}{J}\pt_s u_i\rw)
\rw)\tau_i\\
&=\frac{1}{J}\pt_z(J\pt_zu_\tau)+\frac{1}{J}\pt_\theta\lw(\frac{1}{J}\pt_\theta (u\cdot\tau)
\rw)-\frac{1}{J}\pt_s\lw(\frac{1}{J}u\cdot\pt_s \tau
\rw)-\sum_i \frac{1}{J}\pt_s  u_i\tau_i'\\
&=\frac{1}{J}\pt_z(J\pt_zu_\tau)+\frac{1}{J}\pt_\theta\lw(\frac{1}{J}\pt_s u_\tau
\rw)-\frac{1}{J}\pt_s\lw(\frac{\gamma u_n}{J}
\rw)-\frac{\gamma}{J}\lw(\gamma u_\tau+\pt_s u_n
\rw).
\enda 
\]
  For incompressibility, we find 
\[\bega 
\nabla \cdot u&=\pt_{x_1}u_1+\pt_{x_2}u_2=\sum_{i}\pt_{x_i}s \pt_s u_i+\pt_{x_i}z\pt_z u_i=\sum_i \frac{\tau_i}{J}\pt_s u_i+n_i\pt_z u_i\\
&=\frac{1}{J}\lw(\pt_s u_\tau-\gamma(s)u_n
\rw)+\pt_z u_n.
\enda 
\]
The proof is complete.
\end{proof}

\noindent \textbf{Acknowledgements.} 
The research of TDD was partially supported by the NSF
DMS-2106233 grant and  NSF CAREER award \#2235395. The research of SI was partially supported by NSF DMS-2306528.

\end{document}